\newtheorem{theorem}{Theorem}[section]
\newtheorem*{claim}{Claim}
\newtheorem{lemma}[theorem]{Lemma}
\newtheorem{proposition}[theorem]{Proposition}
\newtheorem{corollary}[theorem]{Corollary}
\newtheorem{question}[theorem]{Question}
\newtheorem{definition}[theorem]{Definition}
\newtheorem*{acknowledgement}{Acknowledgements}
\newcommand{\forces}[1]{\Vdash``#1"}
\newcommand{\baire}{\omega^\omega}
\newcommand{\bbb}{\textgoth{b}}
\newcommand{\ddd}{\textgoth{d}}
\newcommand{\mmm}{\textgoth{m}}
\newcommand{\continuum}{\mathfrak{c}}
\newcommand{\Borel}{\mathrm{Borel}}
\newcommand{\Meager}{\mathcal{M}}
\newcommand{\Null}{\mathcal{N}}
\newcommand{\Baire}{\omega^{\omega}}
\newcommand{\Cantor}{2^{\omega}}
\newcommand{\TT}{\mathbb{T}}
\newcommand{\LL}{\mathbb{L}}
\newcommand{\MM}{\mathbb{M}}
\newcommand{\ideal}[1]{\mathcal{#1}}
\newcommand{\FM}{\mathbb{FM}}
\newcommand{\RandomGraph}{\mathcal{R}}
\newcommand{\Solecki}{\mathcal{S}}
\newcommand{\ED}{\mathcal{ED}}
\newcommand{\FF}{\mathcal{F}in\times\mathcal{F}in}
\newcommand{\nwd}{nwd}
\newcommand{\FIN}{\mathcal{F}in}
\newcommand{\Z}{\mathcal{Z}}
\newcommand{\SUM}{\mathcal{S}um}
\newcommand{\Fin}{\mathcal{F}in}
\title{On ideals related to Laver and Miller trees}
\author{Aleksander Cieślak}
\address{Faculty of Pure and Applied Mathematics, Wrocław University of Science and Technology, Wybrzeże Stanisława Wyspiańskiego 27, 50-370 Wrocław, Poland}
\author{Arturo Martínez-Celis}
\address{Instytut Matematyczny, Uniwersytet Wrocławski, pl. Grunwaldzki 2, 50-384 Wrocław, Poland}
\email{aleksander.cieslak@pwr.edu.pl, arturo.martinez-celis@math.uni.wroc.pl}
\subjclass[2020]{03E17, 03E35, 03E40}
\keywords{Marczewski ideals, cardinal invariants, forcing, trees}
\begin{document}
\maketitle

\begin{abstract}
In this work we consider the ideals $m^0(\mathcal{I})$ and $\ell^0(\mathcal{I})$, ideals generated by the $\mathcal{I}$-positive Miller trees and $\mathcal{I}$-positive Laver trees, respectively. We investigate in which cases these ideals have cofinality larger than $\mathfrak{c}$ and we calculate some cardinal invariants closely related to these ideals.
\end{abstract}

\section{Introduction}
The \emph{Marczewski ideal} $s^{0}$, introduced in \cite{Marcz}, is the collection of all $X\subseteq2^{\omega}$ with the property that every perfect tree on $2^{<\omega}$ has a perfect subtree that shares no branches with $X$. In that work, the author noticed that $s^{0}$ does not have a Borel basis, i.e. there are sets in $s^{0}$ that cannot be covered by a Borel set in $s^{0}$. Generalizing this, Fremlin showed that $\mathrm{cof}(s^{0})>\continuum$ holds in ZFC (see \cite{MiJuSh}). By replacing the perfect trees on $2^{<\omega}$ with Laver or Miller trees on $\omega^{<\omega}$ we obtain the \emph{Laver ideal} $\ell^{0}$ and the \emph{Miller ideal} $m^{0}$, respectively. These ideals and their cofinalities have been of interest in recent years. For example, in \cite{BreKhWoh} authors considered properties of tree types which implies that the cofinality of the tree ideal is greater than continuum, in \cite{Repicky} the author deepened this analysis for Laver trees and, in \cite{ShSpinCofin} the authors consider the problem of consistently distinguishing cofinalities of different types of trees. This paper is concerned with the study of cofinalities of the ideals related to the following variants of Laver and Miller trees: If $\mathcal{I}$ is an ideal on $\omega$ then
\begin{itemize}
    \item a tree $T\subseteq \omega^{<\omega}$ is an \emph{$\mathcal{I}$-Miller tree} if for every $\sigma\in T$ there is $\tau\in T$, $\sigma\subseteq\tau$ such that the set $succ_{T}(\tau)$ is not in $\mathcal{I}$. The collection of all $\mathcal{I}$-Miller trees will be denoted by $\MM_{\mathcal{I}}$,
 
    \item a tree $T\subseteq \omega^{<\omega}$ is an \emph{$\mathcal{I}$-Laver tree} if for every $\sigma\in T$ extending $stem(T)$, the set $succ_{T}(\tau)$ is not in $\mathcal{I}$. The collection of all $\mathcal{I}$-Laver trees will be denoted by $\LL_{\mathcal{I}}$. 
\end{itemize}

In particular, if $\mathcal{I}$ is the ideal of finite subsets of $\omega$, then $\LL_{\mathcal{I}}$ is just the classical Laver forcing, and $\MM_{\mathcal{I}}$ the classical Miller forcing.

To analyze these cofinalities, we will follow the work of Brendle, Khomskii, and Wohofsky. In \cite{BreKhWoh}, the authors found two general properties for a tree type that guarantees that $\mathrm{cof}(t^{0})>\continuum$. They also noticed that the tree type of \emph{full-spliting Miller trees} (\cite{RosNew}, \cite{KhomLag}), defined as
\begin{center}
    $\FM:=\{T\subseteq\omega^{<\omega}: \forall \sigma\in T \exists\tau\in T$ $\sigma\subseteq\tau$ $\forall i\in\omega$ $\tau ^{\frown} i\in T\}$ 
\end{center}
seems to be a misfit in both approaches, leaving $\mathrm{cof}(fm^{0})>\continuum$ as an open problem. Here, $fm^{0}$ is the tree ideal related to $\FM$. The results of \cite{BreKhWoh} are formulated under the following framework
\begin{definition}
    A collection of pruned trees $\TT$ on $\omega^{<\omega}$ is a \emph{combinatorial tree forcing} if for all $T\in\TT$:
\begin{enumerate}
    \item the restriction $T|_{\sigma}$ is also in $\TT$ for all $\sigma\in T$,
    \item for all $T \in \mathbb{T}$ there is a continuous function $f: [T] \rightarrow 2^\omega$ such that, for all $x \in \Cantor, f^{-1}(\{x\})$ is the set of branches of a tree in $\mathbb{T}$.
\end{enumerate}
    
    If additionally, the following condition is satisfied, then we will say that $\TT$ is an \emph{homogeneous combinatorial tree forcing.}
    
\begin{enumerate}
      \item[(3)] there is a homeomorphism $h:\omega^{\omega}\rightarrow[T]$ such that every set $X\subseteq \omega^{\omega}$ contains the body of a $\TT$-tree if and only if $f[[T]]$ does. \label{homogeneousproperty}
\end{enumerate}
\end{definition}

In the literature, the first property is usually referred as the arboreal property and both $\MM_\ideal{I}$ and $\LL_\ideal{I}$ satisfy it. The second property is a stronger version of being $\mathfrak{c}^+$-c.c. and one can easily verify that $\MM_\ideal{I}$ satisfies it, regardless of $\mathcal{I}$. Notice that  $\LL_\ideal{I}$ may fail to satisfy this property as $\LL_\ideal{I}$ could be a c.c.c. forcing notion (for example when $\ideal{I}$ is a maximal ideal). In general, $\LL_\ideal{I}$ satisfies the second property when $\ideal{I}$ is \emph{nowhere maximal}, i.e. when there is no $X \notin \ideal{I}$ such that the \emph{restricted ideal} $\ideal{I} |_X = \{ A \subseteq X : A \in \ideal{I} \}$ is not maximal.  The third condition is called \emph{the homogeneity property} (it is stated in an equivalent way in \cite{BreKhWoh}) and it is well known that both Miller and Laver trees satisfy it (see for example \cite{Zapl1}). Using the same argument, one can show that $\mathcal{I}$-Miller and $\mathcal{I}$-Laver trees satisfy this condition whenever $\mathcal{I}$ is an homogeneous ideal (see Proposition \ref{homog}). If the third condition is satisfied only for a dense set of $T \in \mathbb{T}$ we will say that $\TT$ is \emph{weakly homogeneous}. Sometimes the ideals that we are going to consider are not homogeneous, but the conclusions are still valid.

If $\TT$ is a tree type and $T, S\in \TT$ we write $S\leq T$ if $S\subseteq T$. Now, with every tree type $\TT$ there are two associated notions of smallness.
\begin{definition}
The \emph{tree ideal associated to} $\TT$, denoted by $t^{0}$, is the collection of all $X\subseteq \omega^{\omega}$ such that for any $T\in\TT$ there is $S\leq T$, $S\in\TT$ such that $[S]\cap X=\emptyset$. The ideal $t^{0}_{\mathcal{B}}$ is the Borel part of $t^0$, that is the $\sigma$-ideal generated by $t^{0}\cap\mathrm{Borel}(\baire)$.
\end{definition}
In general, $t^{0}$ may not be a $\sigma$-ideal, although it is going to be for all the tree types considered in this manuscript (a standard fusion argument should be enough to show that both $m^0$ and $\ell^{0}$ are $\sigma$-ideals, see Proposition \ref{aresigmaideals}).

One of the results of \cite{BreKhWoh} is the following:

\begin{theorem}\label{BreKhoWoh}
    Let $\TT$ be a combinatorial tree forcing. If $\TT$ satisfies one of the following two properties then $\mathrm{cof}(t^{0})>\continuum$ holds.
    \begin{itemize}
        \item \emph{Constant or 1-1 property}:
        
	       For every $T\in\TT$ and every continuous $f:[T]\rightarrow 2^{\omega}$ there is a tree $S\leq T, S\in\TT$ such that the restriction $f|_{[S]}$ is either constant or 1-1.
        \item \emph{Incompatibility shrinking property}:
        
            For every $T\in\TT$ and a collection $\{S_{\alpha}:\alpha<\continuum\}\subseteq \TT$ of trees which are incompatible with $T$, there is a $S\in\TT$, $S\subseteq T$ such that $[S]\cap[S_{\alpha}]=\emptyset$ for each $\alpha<\continuum$.
    \end{itemize}
    Additionally, if $\TT$ is a homogeneous combinatorial tree forcing, then the cofinality (as a cardinal) of $\mathrm{cof}(t^{0})$ is greater than $\continuum$.
\end{theorem}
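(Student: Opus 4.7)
Both parts go by contradiction. For the first part, assume $\{A_\alpha:\alpha<\continuum\}\subseteq t^{0}$ is cofinal. I will build $X\in t^{0}$ with $X\not\subseteq A_\alpha$ for every $\alpha$, contradicting cofinality. Enumerate $\TT\times\continuum$ as $\{(T_\alpha,\gamma_\alpha):\alpha<\continuum\}$ with every pair listed, and recurse on $\alpha<\continuum$: since $A_{\gamma_\alpha}\in t^{0}$, pick $S_\alpha\leq T_\alpha$ in $\TT$ with $[S_\alpha]\cap A_{\gamma_\alpha}=\emptyset$ and choose $x_\alpha\in[S_\alpha]$. Let $X=\{x_\alpha:\alpha<\continuum\}$. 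Each $\beta<\continuum$ arises as some $\gamma_\alpha$, with the corresponding $x_\alpha$ in $X\setminus A_\beta$, so no $A_\beta$ covers $X$.

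The real work is showing $X\in t^{0}$, namely that for each $T\in\TT$ we find $S\leq T$ in $\TT$ with $[S]\cap X=\emptyset$. Under the constant-or-$1$-$1$ hypothesis, apply condition~(2) to $T$ to get a continuous $f:[T]\to\Cantor$ whose fibers are bodies of $\TT$-trees, partitioning $[T]=\bigsqcup_{y\in\Cantor}[T_y]$; then invoke the property to obtain $S\leq T$ on which $f$ is constant (so $[S]\subseteq[T_y]$ for a single $y$) or $1$-$1$ (so $[S]$ meets each $[T_y]$ in at most one point). Under the incompatibility shrinking hypothesis, exploit directly that any $\continuum$-many $S_\alpha$ incompatible with $T$ admit a common shrinking $S\leq T$ with $[S]\cap[S_\alpha]=\emptyset$ for all such $\alpha$. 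In both cases the crux is a bookkeeping: the $x_\alpha$ must be picked so that enough of them avoid the bodies destined to serve as shrinkings of any given $T$. This is achieved by a concurrent fusion-style diagonalisation, where in parallel with choosing $x_\alpha$ one enumerates all pairs $(T,f)$ (resp.\ all trees $T$) and maintains a growing family of ``designated shrinkings'' that the successor $x_\alpha$'s are forced to miss.

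The main obstacle is exactly this diagonalisation: it must simultaneously secure the non-coverage requirement and the $t^{0}$-requirement, with condition~(2) acting as the $\continuum^{+}$-c.c.-flavoured control that keeps the combinatorics within $\continuum$. The two shrinking properties yield two different mechanisms for producing ``designated'' trees below any given $T$, but both plug into the same inductive scheme.

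For the stronger conclusion under homogeneity, I argue by contradiction: assume $\cof{\cof{t^{0}}}\leq\continuum$ and write $\kappa:=\cof{t^{0}}$ as the supremum of $\{\kappa_\alpha:\alpha<\lambda\}$ with $\lambda\leq\continuum$ and $\kappa_\alpha<\kappa$. Fix a cofinal family $\{A_\beta:\beta<\kappa\}\subseteq t^{0}$ and partition its index set $\kappa=\bigsqcup_{\alpha<\lambda}I_\alpha$ with $|I_\alpha|=\kappa_\alpha$. Apply condition~(2) to a fixed $T_{0}\in\TT$ to partition $[T_{0}]=\bigsqcup_{y\in\Cantor}[T_y]$ into bodies of $\TT$-trees, pick an injection $\alpha\mapsto y_\alpha$ from $\lambda$ into $\Cantor$, and by homogeneity a homeomorphism $h_\alpha:\Baire\to[T_{y_\alpha}]$ transporting the $\TT$-structure faithfully. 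Since $|I_\alpha|=\kappa_\alpha<\kappa$, the family $\{h_\alpha^{-1}[A_\beta\cap[T_{y_\alpha}]]:\beta\in I_\alpha\}\subseteq t^{0}$ is not cofinal, so there is $X_\alpha\in t^{0}$ avoiding it. Set $X:=\bigcup_{\alpha<\lambda}h_\alpha[X_\alpha]$. No $A_\beta$ covers $X$, because for the $\alpha$ with $\beta\in I_\alpha$ some $x\in X_\alpha$ satisfies $h_\alpha(x)\notin A_\beta$; and $X\in t^{0}$ follows by running the first-part shrinking argument locally on each $[T_{y_\alpha}]$, reducing via the chosen property and condition~(2) to the single-fibre case already delivered by $X_\alpha\in t^{0}$.
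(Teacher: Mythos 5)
You are proving a statement that this paper only quotes: Theorem \ref{BreKhoWoh} is cited from \cite{BreKhWoh} and no proof appears in the manuscript, so I can only judge your argument on its own terms — and as written it is a plan with the essential step missing. The skeleton (enumerate $\TT\times\continuum$, pick witnesses $x_\alpha\notin A_{\gamma_\alpha}$, reserve ``designated shrinkings'' $S_T\leq T$ that all later points must miss) is indeed the right shape, and the easy half of the bookkeeping does work: by condition (2) every $T$ splits into $\continuum$ many pairwise disjoint bodies of $\TT$-subtrees (the fibers are nonempty), so a designated $S_T\leq T$ avoiding the $<\continuum$ many points already chosen always exists. But the other half — that at stage $\alpha$ there \emph{exists} a point outside $A_{\gamma_\alpha}\cup\bigcup_{\beta<\alpha}[S_\beta]$ — is precisely where the two hypotheses have to be used, and your proposal never does this. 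The incompatibility shrinking property only lets you shrink a tree $R$ away from designated trees that are \emph{incompatible} with $R$; nothing in your outline handles designated trees compatible with $R$ (possibly with $[R]\subseteq[S_\beta]$), and a priori the $<\continuum$ many designated bodies could cover the complement of $A_{\gamma_\alpha}$ entirely. In the constant-or-1-1 case you state the dichotomy (constant: $[S]$ inside one fiber; 1-1: $[S]$ meets each fiber in one point) but never connect it to avoidance of $X$ or of the reserved trees. Saying that both properties ``plug into the same inductive scheme'' is exactly the content of the theorem, not a proof of it; the extra inductive hypotheses on how the $S_\beta$ and $x_\beta$ are chosen, which make the point-selection step possible, are the heart of the Brendle–Khomskii–Wohofsky argument and are absent here.

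The homogeneity part has a concrete error, not just a gap. Your $X=\bigcup_{\alpha<\lambda}h_\alpha[X_\alpha]$ is a union of up to $\continuum$ many $t^{0}$ sets, and $t^{0}$ is in general very far from $\continuum$-additive (this very paper shows $\mathrm{add}(t^{0}_{\mathcal{B}})=\omega_1$ in many cases), so ``$X\in t^{0}$ follows by running the first-part argument locally on each fibre'' cannot be waved through. Indeed the reduction fails in the 1-1 case: if $S\leq T$ is a subtree on which the fibre map is injective, then $[S]$ meets each $[T_{y_\alpha}]$ in at most one point, so the ``single-fibre case delivered by $X_\alpha\in t^{0}$'' gives you nothing — you cannot shrink inside a singleton — and $X\cap[S]$ is an essentially arbitrary transversal of size up to $\continuum$ about which you know nothing. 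The known proof avoids this by not choosing the $X_\alpha$ abstractly as witnesses of non-cofinality: the points of all blocks and the designated subtrees are constructed in a single interleaved recursion of length $\continuum$ (using homogeneity to transfer the ideal into each fibre), so that membership of the union in $t^{0}$ is secured stage by stage. You should either carry out that interleaved construction explicitly or find a genuine substitute; as it stands both halves of your write-up assert the conclusion of the hard step rather than proving it.
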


The "Constant or 1-1 property" has been studied in the context of the minimality of forcing extensions. For example, in \cite{Sacks} G.E.Sacks proved the minimality of forcing extensions for Sacks trees and A. Miller proved that $\MM$ has Constant or 1-1 property (see \cite{MillerSuperperfect}). For Laver trees this property has been proved by C. Gray, in \cite{BreKhWoh}.  
In a more general setting, Sabok and Zapletal \cite{SabZapl} proved that if a forcing notion can be presented as $\Borel / \mathcal{J}$ for some $\sigma$-ideal $\mathcal{J}$ generated by closed sets, then the Constant or 1-1 property is equivalent to the minimality of a forcing extensions. The forcing $\FM$ falls into this class of forcing notions and adds Cohen reals (\cite{RosNew}), therefore $\FM$ does not have the constant or 1-1 property. In Theorem \ref{pplusconstant1} we prove that, for $P^+$-ideals, $\LL_{\mathcal{I}}$ does have the 'constant of 1-1 property'.

The Incompatibility shrinking property was isolated as a property in \cite{BreKhWoh} but it has been used in the past. In \cite{Gold} and \cite{MiJuSh} the authors implicitly used it to estimate the additivity of $s^{0}$, $m^{0}$ and $\ell^{0}$ to then consistently distinguish the additivities and covering numbers of the tree ideal. In \cite{Spinas} the author explicitly shown that the Incompatibility shrinking property for Miller forcing holds under $\ddd=\continuum$ also in the context of cardinal invariants of the tree ideals. In this paper, we express this property as a cardinal invariant: The \emph{incompatibility shrinking number} $\mathrm{is}(\TT)$ is the minimal amount of sets from $t^{0}_{\mathcal{B}}$ which intersects every $[T]$ for $T\in\TT$ and the \emph{hereditary shrinking number} $\mathrm{is}_h(\TT)$ is the smallest cardinal of the form $\mathrm{is}(\TT | T)$ for $T \in \TT$, where $\TT | T = \{ S \in \TT : S \leq T \}$. Note that, if $S,T \in \TT$ are such that $S \leq T$, then $\mathrm{is}(\TT | S) \leq \mathrm{is}(\TT | T)$, and therefore $\mathrm{is}_h(\TT) \leq \mathrm{is}(\TT)$ and, if $\TT$ is weakly homogeneous, these cardinals are equal. Observe that $\TT$ the Incompatibility shrinking property is equivalent to the assumption that $\mathrm{is}_h(\TT)=\continuum$. We prove that $min\{\bbb,\mathrm{add}^{*}(\mathcal{I})\}$ is a lower bound of $\mathrm{is}(\LL_{\mathcal{I}})$ and $\mathrm{is}(\MM_{\mathcal{I}})$ and $min\{\bbb,\mathrm{cov}(m^{0}_{\mathcal{B}}(\mathcal{I}))\}$ is a lower bound for $\mathrm{is}(\MM_{\mathcal{I}})$. On the other hand, it follows directly from the definition of $\mathrm{is}(\TT)$, that it falls in between $\mathrm{add}(t^{0}_{B})$ and $\mathrm{cov}(t^{0}_{B})$. However, for several combinatorial tree types, including $\FM$, the additivity of its Borel part is equal to $\omega_{1}$ in ZFC (see \cite{RosNew}). We will calculate the additivity and covering numbers of $m^{0}_{\mathcal{B}}(\mathcal{I})$ and $\ell^{0}_{\mathcal{B}}(\mathcal{I})$ for various Borel ideals. In particular, we show that for certain ideals, including the eventually different ideal, the ideal $conv$ and the Solecki ideal, the additivity of the Borel part is equal $\omega_{1}$ in ZFC. We also prove that for analytic P-ideals the additivity is greater than $\mathrm{add}(\Null)$.

The paper is organized as follows. In section 2 we consider when $\LL_{\mathcal{I}}$ or $\MM_{\mathcal{I}}$ have the constant or 1-1 property. Then, in section 3, we consider the incompatibility shrinking numbers. Finally, in section 4, we calculate the additivity and covering numbers of the Borel parts of the tree ideals of $\MM_{\mathcal{I}}$ and $\LL_{\mathcal{I}}$ for various Borel ideals. 

Our notation is mostly standard and follows \cite{Jech}. A tree is a subset of $\omega^{<\omega}$ closed under taking initial segments. Let $T$ be a tree and $\sigma\in T$, then $[T]=\{x\in \omega^{\omega}: \forall n\in\omega$ $x|_{n}\in T\}$ is the set of branches of $T$ i.e. the body of $T$, $T|_{\sigma}=: \{\tau\in T:\sigma\subseteq\tau \lor \tau\subseteq\sigma\}$ is the restriction of $T$ to $\sigma$, $succ_{T}(\sigma)=:\{n\in \omega:\sigma^{\frown}n \in T\}$ is the set of immediate successors of $\sigma$ in $T$ and $split(T)=:\{\sigma\in T:|succ_{T}(\sigma)|>1\}$ is the set of splitnodes of $T$. Also, $split_{n}(T)=\{\sigma\in split(T): |split(T)\cap \{\sigma|_{i}:i<|\sigma|\}|=n\}$. Clearly then $split(T)=\bigcup_{n}split_{n}(T)$. We will write $S\leq_{n} T$ when $S\leq T$ and $split_{n}(S)=split_{n}(T)$. If $succ_{T}(\sigma)=\emptyset$ then we call $\sigma$ a terminal node of $T$ and denote these as $term(T)$. Let $stem(T)$ be the shortest splitnode of $T$. Sometimes, when several trees with the same stem are considered, we will simply write $stem$.

An ideal $\mathcal{I}$ is a collection of subsets closed under finite unions and subsets. Sometimes, to avoid confusion, we will write $dom(\mathcal{I})$ for the set on which $\mathcal{I}$ lives. Subsets of $\omega$ which are not in $\mathcal{I}$ are referred to be $\mathcal{I}$-positive or simply positive if the ideal is known from context. An ideal $\mathcal{I}$ is tall if every infinite $A\subseteq\omega$ contains an infinite $B\in\mathcal{I}$. An ideal $\mathcal{I}$ is $P^+$-ideal if, for every almost decreasing sequence of $\mathcal{I}$-positive sets, there is a $\mathcal{I}$-positive pseudointersection i.e. an $A\in[\omega]^{\omega}\setminus\mathcal{I}$ almost contained in every element of the sequence.

We now list all the Borel ideals which are of particular importance for us in this paper. Let $\Fin$ be the ideal of finite subsets of $\omega$. All ideals we are going to consider in this paper contain $\Fin$. Let $\FF$ be ideal on $\omega\times\omega$ consisting of these sets $A$ such that for almost all $n\in\omega$ the set $\{m: (n,m)\in A\}$ is finite. Let $\ED$ be an ideal on $\omega\times\omega$ consisting of these sets $A$ such that for some $N$ for all $n>N$ the set $\{m: (n,m)\in A\}$ has at most $N$ elements. Let $\ED_{fin}$ be the ideal on $\{(a,b)\in\omega\times\omega:b\leq a\}$ consisting of these sets $A$ such that for some $N$ for all $n>N$ the set $\{m: (n,m)\in A\}$ has at most $N$ elements. Let $\nwd$ be the ideal on $2^{<\omega}$ consisting of these sets $A$ such that for any $\sigma\in2^{<\omega}$ there is $\tau\in2^{<\omega}$, $\sigma\subseteq\tau$ with $\tau\notin A$. Clearly, any $A\in nwd$ can be covered by the set of form of a tree $T\subseteq2^{<\omega}$ with $[T]$ being nowhere dense set in $2^{\omega}$. Let $\Solecki$ be the Solecki ideal, an ideal on $\Omega=:\{C\in CLOPEN(2^{\omega}):\lambda(C)=1/2\}$ generated by sets of the form $\{C\in\Omega: C\cap F\neq\emptyset\}$ where $F\in[2^{\omega}]^{<\omega}$. The random graph ideal $\RandomGraph$ is the ideal generated by cliques and anticliques in some fixed instance of the random graph. The convergent ideal $conv$ is defined as the ideal on $\mathbb{Q}\cap[0,1]$ generated by convergent sequences. The summable ideal $\SUM$ consists of these $A\subseteq\omega$ such that $\SUM_{n\in A}1/n<\infty$ and the density $\Z$ of these $A\subseteq\omega$ such that $lim_{n\to\infty}\frac{|A\cap n|}{n}<\infty$. For more information about these, the reader may consult \cite{Hrus}.

Given two ideals $\mathcal{I}_{0}$ and $\mathcal{I}_{1}$, then $\mathcal{I}_{0}$ is Katetov reducible to $\mathcal{I}_{1}$, $\mathcal{I}_{0}\leq_{K}\mathcal{I}_{1}$ for short, if there is a function $\pi:\omega\rightarrow\omega$ such that $\pi^{-1}[A]\in\mathcal{I}_{1}$ for any $A\in\mathcal{I}_{0}$. In such a case $\mathrm{cov}^*(\mathcal{I}_{0})\leq \mathrm{cov}^*(\mathcal{I}_{1})$ (see \cite{Hrus}). The ideal $\mathcal{I}$ is homogeneous if for each $\mathcal{I}$-positive set $X$ there is bijection $\pi:\omega\rightarrow X$ such that $\pi^{-1}[Y]$ is $\mathcal{I}$-positive iff $Y\subseteq X$ is. The ideal is \emph{weakly homogeneous} if this property is only satisfied for a dense set of positive sets. The ideals that we are going to consider in this work are not necessarily homogeneous (or even weakly homogeneous). The following diagram shows the complete picture of the mentioned ideals in the Katetov order (see \cite{HrushKatetov}). The arrows point toward the ideal which is Katetov stronger.

\begin{center}
\begin{tikzpicture}[]
  \matrix[matrix of math nodes,column sep={75pt,between origins},row
    sep={30pt,between origins},nodes={asymmetrical rectangle}] 
  {
    |[name=nwd]| \nwd &|[name=finfin]| \FF &|[name=Z]| \Z \\
    &&|[name=sum]| \SUM \\
    &&|[name=edfin]| \ED_{\mathrm{Fin}} \\
    |[name=sol]| \Solecki &|[name=conv]| conv &|[name=ed]| \ED  \\
    &|[name=rand]| \RandomGraph \\
    &|[name=fin]| \FIN \\
  };
    \draw[->] 
            (conv) edge (finfin)
            (ed) edge (finfin)
            (sum) edge (Z)
            (conv) edge (nwd)
            (conv) edge (Z)
            (rand) edge (ed)
            (ed) edge (edfin)
            (edfin) edge (sum)
            (rand) edge (conv)
            (fin) edge (rand)
            (fin) edge (sol)
            (sol) edge (nwd);
            
\end{tikzpicture}
\end{center}

A $\sigma$-ideal on a Polish space $X$ is an ideal $X$ closed under countable unions. As usual, $\mathcal{M}$ stands for the $\sigma$-ideal of meager sets in $X$ and $\mathcal{N}$ for the $\sigma$-ideal of sets of Lebesgue measure zero. If $\mathcal{J}$ is a $\sigma$-ideal on $X$, then the standard cardinal characteristics are
\begin{center}
    $\mathrm{add}(\mathcal{J})=\min\{|\mathcal{F}|:\mathcal{F}\subseteq\mathcal{J}$ and $\bigcup\mathcal{F}\notin\mathcal{J}\}$,\\
    $\mathrm{cov}(\mathcal{J})=\min\{|\mathcal{F}|:\mathcal{F}\subseteq\mathcal{J}$ and $\bigcup\mathcal{F}=X\}$,\\
    $\mathrm{non}(\mathcal{J})=\min\{|Y|:Y\subseteq X$ and $X\notin\mathcal{J}\}$,\\
    $\mathrm{cof}(\mathcal{J})=\min\{|\mathcal{F}|:\mathcal{F}\subseteq\mathcal{J}$ and $\forall Y\in\mathcal{J}\exists Y'\in\mathcal{F}$ $Y\subseteq Y'\}$.
\end{center}

For $f,g\in\omega^{\omega}$ we write $f\leq^{*} g$ if $g$ dominates $f$ i.e. $f(n)\leq g(n)$ for almost all $n\in\omega$. A family $\mathcal{F}\subseteq\omega^{\omega}$ is dominating if for every $g\in\omega^{\omega}$ there is $f\in\mathcal{F}$ which dominates $g$. A family $\mathcal{F}\subseteq\omega^{\omega}$ is unbounded if there is no $g\in\omega^{\omega}$ which dominates every $f\in\mathcal{F}$. The cardinal invariant $\bbb$ is defined as the minimal cardinality of an unbounded family and $\ddd$ as the minimal cardinality of a dominating family.

For $\Meager$ and $\Null$, the ZFC-provable inequalities between these invariants are summarised in the following diagram, widely known as the Cicho\'{n}'s diagram (the interested reader may consult \cite{BJ} or \cite{Blasschar}):

\begin{center}
\begin{tikzpicture}[]
  \matrix[matrix of math nodes,column sep={55pt,between origins},row
    sep={35pt,between origins},nodes={asymmetrical rectangle}] (s)
  {
    &|[name=covn]| \mathrm{cov}(\mathcal{N}) &|[name=nonm]| \mathrm{non}(\mathcal{M}) &|[name=cofm]| \mathrm{cof}(\mathcal{M}) &|[name=cofn]| \mathrm{cof}(\mathcal{N}) &|[name=continuum]| \continuum \\
    &&|[name=b]| \bbb &|[name=d]| \ddd \\
    |[name=omegaone]| \aleph_{1} &|[name=addn]| \mathrm{add}(\mathcal{N}) &|[name=addm]| \mathrm{add}(\mathcal{M}) &|[name=covm]| \mathrm{cov}(\mathcal{M}) &|[name=nonn]| \mathrm{non}(\mathcal{N}) \\
  };
    \draw[->] (omegaone) edge (addn)
            (addn) edge (addm)
            (addm) edge (covm)
            (covm) edge (nonn)
            (addm) edge (b)
            (b) edge (d)
            (covm) edge (d)
            (covn) edge (nonm)
            (addn) edge (covn)
            (b) edge (nonm)
            (d) edge (cofm)
            (nonm) edge (cofm)
            (cofm) edge (cofn)
            (nonn) edge (cofn)
            (cofn) edge (continuum);
\end{tikzpicture}
\end{center}
The arrows point towards the larger cardinal invariant.

\section{Constant or 1-1 property}
We begin this section with a couple of basic, already mentioned facts about the $\mathcal{I}$-Laver and $\mathcal{I}$-Miller trees. Let us start with the following proposition

\begin{proposition}\label{homog}
    Both $\MM_{\mathcal{I}}$ and $\LL_{\mathcal{I}}$ are (weakly) homogeneous whenever $\mathcal{I}$ is (weakly) homogeneous. 
\end{proposition}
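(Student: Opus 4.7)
The approach is to build, for each $T \in \MM_\mathcal{I}$ (respectively $T \in \LL_\mathcal{I}$), a homeomorphism $h\colon \omega^\omega \to [T]$ by recursively stitching together bijections of $\omega$ with the $\mathcal{I}$-positive successor sets appearing in $T$ that are furnished by the homogeneity of $\mathcal{I}$. This follows the template used for classical Laver and Miller forcing, as alluded to in \cite{Zapl1}; the only change is that now the bijections are not arbitrary but must preserve $\mathcal{I}$-positivity in both directions, which is precisely what homogeneity of $\mathcal{I}$ grants.

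For the Laver case, fix $T \in \LL_\mathcal{I}$ with stem $s$; by definition every $\tau \supseteq s$ in $T$ has $succ_T(\tau) \notin \mathcal{I}$, so homogeneity of $\mathcal{I}$ yields, for each such $\tau$, a bijection $\pi_\tau\colon \omega \to succ_T(\tau)$ under which $\mathcal{I}$-positivity is preserved in both directions. Define $h\colon \omega^\omega \to [T]$ level-by-level by $h(x)\upharpoonright|s| = s$ and $h(x)(n) = \pi_\tau(x(n-|s|))$ whenever $\tau = h(x)\upharpoonright n$; continuity of $h$ and $h^{-1}$ is immediate since each output coordinate depends on only finitely many input coordinates. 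The two-way positivity preservation of the $\pi_\tau$'s then gives a bijective correspondence between $\mathcal{I}$-Laver subtrees of $\omega^{<\omega}$ and $\mathcal{I}$-Laver subtrees of $T$ sharing the stem $s$, so $X$ contains the body of an $\mathcal{I}$-Laver tree iff $h[X]$ does. The Miller case runs along the same lines after a preliminary thinning: given $T \in \MM_\mathcal{I}$, first pass to a subtree $T' \leq T$ whose splitting skeleton is an arboreal copy of $\omega^{<\omega}$, by recursively picking, above each chosen splitnode $\tau$ of $T'$ and each $i \in succ_{T'}(\tau)$, an extension $\tau' \supseteq \tau{}^\frown i$ in $T$ with $succ_T(\tau') \notin \mathcal{I}$, inserting the unary path from $\tau{}^\frown i$ to $\tau'$ into $T'$, and declaring $\tau'$ the next splitnode with $succ_{T'}(\tau') = succ_T(\tau')$. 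Then apply the Laver recipe at the splitnodes of $T'$ to obtain $h\colon \omega^\omega \to [T']$.

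For the weakly homogeneous statement, the same construction works except that positivity-preserving bijections $\pi$ are available only for $\mathcal{I}$-positive sets lying in some dense subfamily of $\mathcal{I}^+$; hence at each step one must further shrink the chosen $succ_T(\tau)$ to a member of this dense subfamily, yielding a dense collection of $T' \leq T$ in $\MM_\mathcal{I}$ (respectively $\LL_\mathcal{I}$) on which a witnessing homeomorphism can be defined, which is the definition of weak homogeneity. The main point to verify carefully is that $h$ does translate $\mathcal{I}$-Miller (resp.\ $\mathcal{I}$-Laver) subtrees into $\mathcal{I}$-Miller (resp.\ $\mathcal{I}$-Laver) subtrees in both directions; because $h$ is defined level-by-level through the $\pi_\tau$'s, this reduces to the two-way preservation of $\mathcal{I}$-positivity by each $\pi_\tau$, which is exactly the homogeneity assumption on $\mathcal{I}$. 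The preliminary passage to $T'$ in the Miller case is where one must be most attentive, since the unary segments between consecutive splitnodes of $T'$ must not obstruct the identification of branches with $\omega^\omega$; they do not, since they depend only on $T'$ and not on $x$.
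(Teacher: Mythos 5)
Your Laver argument is the same as the paper's: fix, for every node $\tau$ of $T$ extending the stem, a positivity-preserving bijection $\pi_{\tau}:\omega\rightarrow succ_{T}(\tau)$ supplied by homogeneity of $\mathcal{I}$, and stitch them level by level into $h:\omega^{\omega}\rightarrow[T]$; your verification that $h$ transfers $\mathcal{I}$-Laver bodies in both directions is exactly the ``it is easy to see'' step of the paper, and the adaptation to the weakly homogeneous case (shrinking each successor set into the dense family of positive sets admitting such bijections, which yields the property on a dense set of trees) is also fine.

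The gap is in the Miller case. The homogeneity property requires, for \emph{every} $T\in\MM_{\mathcal{I}}$, a homeomorphism $h:\omega^{\omega}\rightarrow[T]$ with the two-way transfer property, but your construction produces $h$ onto $[T']$ for a properly thinned $T'\leq T$: you discard all intermediate branching between the chosen splitnodes (a general $\mathcal{I}$-Miller tree may branch finitely or $\mathcal{I}$-smally, even $\mathcal{I}$-positively, off the selected skeleton) and, more seriously, $[T]$ contains branches that eventually avoid $\mathcal{I}$-splitting nodes altogether, which no skeleton map can reach. Hence what you actually prove is that the property holds on a dense set of trees, i.e.\ that $\MM_{\mathcal{I}}$ is \emph{weakly} homogeneous when $\mathcal{I}$ is homogeneous, not the full homogeneity asserted in the proposition. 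Absorbing the branches of $[T]\setminus[T']$ into a homeomorphism defined on all of $[T]$ while preserving positivity of $\mathcal{I}$-Miller bodies in both directions is precisely the nontrivial content of the Miller case (it is already the delicate point for classical Miller trees, which is why the paper defers to the folklore argument and only writes out the Laver case); your last paragraph gestures at the unary segments but never addresses surjectivity onto $[T]$, so the claim as stated is not established for $\MM_{\mathcal{I}}$.
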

\begin{proof}
    We verify this only for Laver trees and for the homogeneous property as the proof for the other cases is similar. Let then $T\in\LL_{\mathcal{I}}$ and for each $\sigma\in split(T)$ fix a function $\pi_{\sigma}:\omega\rightarrow succ_{T}(\sigma)$ such that $\pi_{\sigma}^{-1}[X]$ is $\mathcal{I}$-positive iff $X\subseteq succ_{T}(\sigma)$ is. Define inductively a homeomorphism $h:\omega^{\omega}\rightarrow [T]$ such that $h(x)=stem(T) ^{\frown} \pi_{\sigma_{0}}(x(0))^{\frown} \pi_{\sigma_{1}}(x(1))^{\frown}...$ where $\sigma_{n}=h(x)|_{|stem(T)|+n}$. It is easy to see that $h$ satisfies the required properties.
\end{proof}

The next proposition is easy to verify. 

\begin{proposition}\label{aresigmaideals}
    Both $\ell^{0}(\mathcal{I})$ and $m^{0}(\mathcal{I})$ are $\sigma$-ideals.
\end{proposition}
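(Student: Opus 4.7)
The plan is to verify closure under countable unions by a standard fusion argument; closure under subsets is immediate from the definitions. I will treat $m^{0}(\mathcal{I})$ in detail; the proof for $\ell^{0}(\mathcal{I})$ is analogous and simpler, because every node above the stem of an $\mathcal{I}$-Laver tree is automatically a positive splitnode.

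Given $T\in\MM_{\mathcal{I}}$, I would inductively enumerate the antichains of \emph{positive splitnodes} $psplit_{n}(T)$ (those $\sigma\in T$ with $succ_{T}(\sigma)\notin\mathcal{I}$, arranged level by level as one does for $split_{n}$), and define the usual fusion refinement: $S\leq_{n}T$ iff $S\leq T$, $psplit_{i}(S)=psplit_{i}(T)$, and $succ_{S}(\sigma)=succ_{T}(\sigma)$ for all $i\leq n$ and $\sigma\in psplit_{i}(T)$. The standard fusion lemma then gives that any $\leq_{n}$-decreasing chain $T_{0}\geq_{0}T_{1}\geq_{1}T_{2}\geq_{2}\cdots$ has intersection $S=\bigcap_{n}T_{n}\in\MM_{\mathcal{I}}$, with $psplit_{n}(S)=psplit_{n}(T_{n})$ for every $n$.

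Given $\{X_{n}\}_{n\in\omega}\subseteq m^{0}(\mathcal{I})$ and $T\in\MM_{\mathcal{I}}$, I would construct such a chain starting from $T_{0}=T$, ensuring additionally that $[T_{n+1}]\cap X_{n}=\emptyset$. At stage $n+1$: for each $\tau\in psplit_{n+1}(T_{n})$ (a countable antichain) the restriction $T_{n}|_{\tau}$ is an $\mathcal{I}$-Miller tree by the arboreal property, so $X_{n}\in m^{0}(\mathcal{I})$ yields an $\mathcal{I}$-Miller subtree $U_{\tau}\leq T_{n}|_{\tau}$ disjoint from $X_{n}$. Replacing $T_{n}|_{\tau}$ by $U_{\tau}$ for every such $\tau$ inside $T_{n}$ produces $T_{n+1}\leq_{n}T_{n}$ with $[T_{n+1}]\cap X_{n}=\emptyset$. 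The fusion limit $S$ then satisfies $[S]\subseteq[T_{n+1}]$ for every $n$, hence $[S]\cap\bigcup_{n}X_{n}=\emptyset$, as required.

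The main subtlety is the routine fusion bookkeeping: one must verify that successors at preserved positive splitnodes remain untouched at later stages (so the limit $S$ indeed lies in $\MM_{\mathcal{I}}$) and that every branch of $T_{n+1}$ passes through $psplit_{n+1}(T_{n})$ so that the replacement really does remove all of $X_{n}$. The latter is standard once one restricts to trees in $\MM_{\mathcal{I}}$ whose branches hit every positive-splitting level, a reduction made possible by the arboreal property. For $\LL_{\mathcal{I}}$ the same template applies with $psplit_{n}(T)=\{\sigma\in T:|\sigma|=|stem(T)|+n\}$, and the construction then reduces to a direct level-by-level fusion since all nodes above the stem of an $\mathcal{I}$-Laver tree are positive splitnodes.
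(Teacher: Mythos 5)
Your Miller half is essentially the argument the paper has in mind (its proof just says that for Miller the step can be done directly), and the bookkeeping you sketch --- passing to the dense set of $\mathcal{I}$-Miller trees all of whose branches meet every positive-splitting level, then grafting avoidance subtrees above the nodes of the $(n+1)$-st positive-splitting level --- is the standard way to make that precise. The genuine gap is in the Laver half, which you declare ``analogous and simpler'' and reduce to ``a direct level-by-level fusion.'' That reduction fails. When you replace $T_{n}|_{\tau}$, for $\tau$ at level $|stem(T_{n})|+n+1$, by an arbitrary $\mathcal{I}$-Laver subtree $U_{\tau}\leq T_{n}|_{\tau}$ with $[U_{\tau}]\cap X_{n}=\emptyset$, nothing guarantees $stem(U_{\tau})=\tau$: the definition of $\ell^{0}(\mathcal{I})$ only gives you \emph{some} subtree, and its stem may properly extend $\tau$. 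Every node strictly between $\tau$ and $stem(U_{\tau})$ then has a single successor in the glued tree, hence an $\mathcal{I}$-small successor set (all ideals considered contain $\Fin$), so $T_{n+1}$ is not an $\mathcal{I}$-Laver tree; and since later stages only shrink the tree, this defect persists into the fusion limit. This is exactly where Laver differs from Miller: long non-splitting trunks are harmless for $\MM_{\mathcal{I}}$, but for $\LL_{\mathcal{I}}$ every node above the stem must split positively.

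What your proposal omits is the stem-preserving shrinking lemma: for every $X\in\ell^{0}(\mathcal{I})$ and $T\in\LL_{\mathcal{I}}$ there is $S\leq T$ with $stem(S)=stem(T)$ and $[S]\cap X=\emptyset$. This is not immediate from the definition of $\ell^{0}(\mathcal{I})$; it requires a rank/``goodness'' argument (call $\sigma$ good if some $\mathcal{I}$-Laver subtree of $T|_{\sigma}$ with stem $\sigma$ avoids $X$; if a good node had an $\mathcal{I}$-positive set of good successors one glues witnesses, so were the stem bad one could restrict node by node to the positive sets of bad successors, obtaining a same-stem subtree all of whose nodes are bad, contradicting $X\in\ell^{0}(\mathcal{I})$ applied to it). This is the same mechanism as the pure decision property, and it is precisely the extra ingredient the paper invokes for the Laver case (``for Laver we use the pure decision property,'' while ``for Miller this can be done directly''). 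With that lemma your fusion template goes through; without it the Laver half does not. In short, you have located the difficulty on the wrong side: Miller is the direct case, Laver is the one that needs the additional lemma.
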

\begin{proof}
    Let $t^{0}_{\mathcal{B}}$ be any of these two ideals. If $\{X_{n}:n\in\omega\}\subseteq t^{0}$ and $T\in \TT$ we construct a fusion sequence $\{T_{n}:n\in\omega\}$ such that $[T_{n}]\cap X_{n}=\emptyset$. To construct $T_{n+1}$ out of $T_{n}$ in the case of Laver we use pure decision property while for Miller this can be done directly. Then, if $S$ is a fusion of $T_{n}$'s we clearly have $[S]\cap\bigcup_{n}X_{n}=\emptyset$.
\end{proof}

The Borel parts $\ell^{0}_{\mathcal{B}}(\mathcal{I})$ and $m^{0}_{\mathcal{B}}(\mathcal{I})$ are also $\sigma$-ideals by Theorems \ref{SabokZaplDich} and \ref{MillerDich} respectively. The crucial tool for investigating the constant or 1-1 property will be the possibility of reading reals out of the generic real in a continuous way.

\begin{definition}
A tree type $\TT$ has the \emph{continuous reading of names} if for each $T\in\TT$ and each $\TT$-name $\dot{x} \in 2^{\omega}$, there is a $S\leq T$, $S\in\TT$ and a continuous function $f:[S]\rightarrow 2^{\omega}$ such that $S \forces{f(\dot{r}_\mathrm{gen}) = \dot{x}}$.
\end{definition}

The continuous reading of names for $\MM_{\mathcal{I}}$ follows from a result of Zapletal \cite{Zapl1} saying that if a forcing notion $\TT$ can be expressed as $\Borel\setminus\mathcal{J}$ where $\sigma$-ideal $\mathcal{J}$ is generated by closed sets (and $\MM_{\mathcal{I}}$ can, by Theorem \ref{SabokZaplDich}) then it has the continuous reading of names. However, as $\LL_{\mathcal{I}}$ always adds a dominating real, the ideal $\ell^{0}_{\mathcal{B}}(\mathcal{I})$ is not $\Sigma_{1}^{1}$-on-$\Pi_{1}^{1}$ (see \cite{Zapl1}) and thus cannot be $\sigma$-generated by closed sets. We will show the continuous reading of names for $\LL_{\mathcal{I}}$ directly. To do so, first we show that $\LL_{\mathcal{I}}$ has the pure decision property.

\begin{proposition}[Pure decision property]
    If $T\in\LL_{\mathcal{I}}$ is such that $T\forces{\dot{x}\in 2}$ then there is $S\leq_{0}T$ which decides the value of $\dot{x}$ i.e. either $S\forces{\dot{x}=0}$ or $S\forces{\dot{x}=1}$.
\end{proposition}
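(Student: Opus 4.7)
The plan is a standard Ramsey-style fusion argument. Call a node $\sigma \in T$ with $\sigma \supseteq stem(T)$ \emph{decisive} if there exists $S \leq T|_\sigma$ with $stem(S) = \sigma$ such that $S\forces{\dot{x}=i}$ for some $i \in \{0,1\}$. The goal is then to show that $stem(T)$ itself is decisive, which directly gives the desired $S \leq_0 T$.

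The key step is the following dichotomy: if $\sigma \in T$ extending $stem(T)$ is not decisive, then $D(\sigma) := \{n \in succ_T(\sigma) : \sigma^{\frown} n \text{ is decisive}\}$ belongs to $\mathcal{I}$. Suppose otherwise, so $D(\sigma) \notin \mathcal{I}$, and partition $D(\sigma) = D_0 \cup D_1$ according to which value a witness for $\sigma^{\frown} n$ forces on $\dot{x}$. Since $\mathcal{I}$ is an ideal, some $D_i$ is $\mathcal{I}$-positive. Picking a witness $S_n \leq T|_{\sigma^{\frown} n}$ with $stem(S_n)=\sigma^{\frown} n$ and $S_n\forces{\dot{x}=i}$ for each $n \in D_i$, the amalgamation $S := \{\tau \in T : \tau \subseteq \sigma\} \cup \bigcup_{n \in D_i} S_n$ is an $\mathcal{I}$-Laver tree in $\LL_{\mathcal{I}}$ with $stem(S) = \sigma$ satisfying $S\forces{\dot{x}=i}$, contradicting the non-decisiveness of $\sigma$.

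Granted the dichotomy, assume toward a contradiction that $stem(T)$ is not decisive. Construct $S \leq_0 T$ recursively: set $stem(S) = stem(T)$ and, given a non-decisive node $\sigma$ already placed in $S$, define $succ_S(\sigma) := \{n \in succ_T(\sigma) : \sigma^{\frown} n \text{ is not decisive}\}$, which is $\mathcal{I}$-positive by the dichotomy. The resulting $S$ lies in $\LL_{\mathcal{I}}$, has the same stem as $T$, and no node of $S$ beyond the stem is decisive. For the contradiction, pick any $S' \leq S$ that decides $\dot{x}$ and set $\sigma := stem(S')$. Then $\sigma \in S$, $\sigma \supseteq stem(T)$, and $S' \leq T|_\sigma$ with $stem(S') = \sigma$ witnesses that $\sigma$ is decisive, contradicting the construction of $S$.

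The main obstacle is verifying that the amalgamation in the dichotomy step genuinely produces a member of $\LL_{\mathcal{I}}$: the hypothesis that $\mathcal{I}$ is an ideal (so that positivity is preserved under 2-partitions, and positive sets of successors remain positive after gluing) is essential both for obtaining a positive $D_i$ and for ensuring Laver-positivity at $\sigma$. Once this is in place, the remainder of the argument is purely combinatorial and uses no further structural assumption on $\mathcal{I}$.
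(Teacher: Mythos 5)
Your proof is correct and follows essentially the same route as the paper's: there, one assumes no $S\leq_{0}T$ decides $\dot{x}$, prunes at each splitnode the set $A_{\sigma}$ of successors below which a deciding condition exists (asserted to lie in $\mathcal{I}$, which is exactly your pigeonhole-plus-amalgamation dichotomy), and derives a contradiction from the density of deciding conditions below the resulting tree. The only differences are presentational: you make the amalgamation argument explicit and prune in a single recursion relative to the fixed $T$, whereas the paper organizes the pruning as a fusion sequence $T_{n+1}\leq_{n}T_{n}$.
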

\begin{proof}
    Assume towards contradiction that there is no such $S\leq_{0}T$ and let $T_{0}=T$. Inductively we construct a sequence $\{T_{n}:n\in\omega\}\subseteq \LL_{\mathcal{I}}$ such that for every $n\in\omega$ we have $T_{n+1}\leq_{n}T_{n}$ and for every $\sigma\in split_{n}(T)$ there is no $S\leq T_{n}|{\sigma}$ deciding the value of $\dot{x}$. Assume that $T_{n}$ hes been constructed. For each $\sigma\in split_{n}(T)$ the set $A_{\sigma}=:\{i\in succ_{T_{n}}(\sigma): \exists S\leq_{0}T_{n}|_{\sigma^{\frown}i} \text{ deciding }\dot{x}\}$ is in $\mathcal{I}$. We then construct $T_{n+1}$ by letting $succ_{T_{n+1}}(\sigma)=succ_{T_{n}}(\sigma)\setminus A_{\sigma}$ for $\sigma\in split_{n}(T)$. Finally, if $S$ is the fusion of $T_{n}$'s then there is no $R\leq S$ deciding $\dot{x}$, a contradiction.
\end{proof}

Now, having the pure decision property, the continuous reading of names follows easily.

\begin{proposition}
    $\LL_{\mathcal{I}}$ has the continuous reading of names.
\end{proposition}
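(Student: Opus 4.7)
The plan is to construct, by the familiar fusion-plus-pure-decision recipe, a single condition $S \leq T$ together with a continuous map $f \colon [S] \to 2^\omega$ that computes $\dot x$ from the generic branch. Namely, I will build a fusion sequence $T = T_0 \geq_0 T_1 \geq_1 T_2 \geq_2 \dots$ with the property that for every $n$ and every $\sigma \in split_n(T_{n+1})$, the condition $T_{n+1}|_\sigma$ already decides the bit $\dot x(n)$. Taking $S$ to be the fusion, this decision data packages into a continuous $f \colon [S] \to 2^\omega$.

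The inductive step is the point where the previous proposition is used. Suppose $T_n$ has been defined. For each $\sigma \in split_n(T_n)$, apply the pure decision property to $T_n|_\sigma$ and the $\LL_{\mathcal{I}}$-name $\dot x(n) \in 2$, obtaining $S_\sigma \leq_0 T_n|_\sigma$ which decides $\dot x(n)$. Because $S_\sigma$ has the same stem $\sigma$ as $T_n|_\sigma$, all of the $S_\sigma$'s agree with $T_n$ on the part of the tree at or below the $n$-th splitting level, so the tree
$$T_{n+1} \;=\; \bigcup_{\sigma \in split_n(T_n)} S_\sigma$$
is well-defined, satisfies $T_{n+1} \leq_n T_n$, lies in $\LL_{\mathcal{I}}$ (every node extending $stem$ still has an $\mathcal{I}$-positive successor set, inherited either from $T_n$ below level $n$ or from some $S_\sigma$ from level $n$ onwards), and decides $\dot x(n)$ below each of its $n$-th splitnodes.

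With $S := \bigcap_n T_n$, for each branch $x \in [S]$ and each $n$ let $\sigma_n(x)$ be the unique element of $split_n(S)$ that is an initial segment of $x$, and let $v_n(x) \in 2$ be the value of $\dot x(n)$ decided by $S|_{\sigma_n(x)}$. Setting $f(x)(n) = v_n(x)$ gives a map whose $n$-th coordinate depends only on the finite initial segment $\sigma_n(x)$ of $x$, so $f$ is continuous. Since $\dot r_{\mathrm{gen}}$ is forced to be a branch of $S$, the condition $S|_{\sigma_n(\dot r_{\mathrm{gen}})}$ lies in the generic filter for every $n$, and it forces $\dot x(n) = v_n(\dot r_{\mathrm{gen}}) = f(\dot r_{\mathrm{gen}})(n)$.

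I expect no genuine obstacle beyond bookkeeping; the only thing to watch is the verification that the amalgamated $T_{n+1}$ remains an $\mathcal{I}$-Laver tree, which rests solely on the fact that pure decision was applied in the form $S_\sigma \leq_0 T_n|_\sigma$, i.e. without shrinking any successor set strictly below the $n$-th splitting level. Everything else is a standard fusion computation of the sort carried out in Proposition \ref{aresigmaideals}.
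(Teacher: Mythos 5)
Your argument is correct and is essentially the paper's proof: a fusion sequence $T_{n+1}\leq_n T_n$ obtained by applying pure decision at each $n$-th splitnode and amalgamating, with the decided values assembled into a continuous $f$ on the fusion; the only cosmetic difference is that you decide the single bit $\dot{x}(n)$ where the paper decides $\dot{x}|_n$, which changes nothing. Your explicit check that the amalgamated tree stays in $\LL_{\mathcal{I}}$ and that $\leq_0$ preserves the stem is exactly the bookkeeping the paper leaves implicit.
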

\begin{proof}
    Let $T\forces{\dot{x}\in 2^{\omega}}$. Inductively we construct a sequence $\{T_{n}:n\in\omega\}\subseteq \LL_{\mathcal{I}}$ such that $T_{0}=T$, for every $n\in\omega$ $T_{n+1}\leq_{n}T_{n}$ and for every $\sigma\in split_{n}(T)$, $T_{n}|{\sigma}\forces{\dot{x}|_{n}=x_{\sigma}}$. Then, if $S$ is the fusion of $T_{n}$'s we define $f:[S]\rightarrow 2^{\omega}$ as $f(y)=\bigcup x_{y|_{n}}$. Clearly $f$ is continuous and maps generic real into $\dot{x}$.
\end{proof}

Let us recall the following theorem of Sabok and Zapletal which completely characterizes the constant or 1-1 property for $\mathcal{I}$-Miller trees.

\begin{theorem}[\cite{SabZapl}]\label{SabZapl}
Forcing $\MM_{\mathcal{I}}$ adds Cohen reals if and only if $\nwd\leq_{K}\mathcal{I}|_{X}$ for some $X\notin \mathcal{I}$.
Moreover, either $\MM_{\mathcal{I}}$ has the constant or 1-1 property or adds Cohen reals.
\end{theorem}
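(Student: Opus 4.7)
The plan is to leverage the continuous reading of names for $\MM_{\ideal{I}}$ (available by Theorem \ref{SabokZaplDich} together with Zapletal's criterion that every Suslin forcing whose Borel small sets form a $\sigma$-ideal $\sigma$-generated by closed sets has continuous reading) and reduce the theorem to a structural dichotomy about continuous functions $f : [T] \to 2^\omega$ for $T \in \MM_{\ideal{I}}$. Since every $\MM_{\ideal{I}}$-name for a real is continuously read by some such $f$ on a condition, understanding when $f$ can be shrunk onto a constant or injective form is equivalent to understanding when the generic image may be Cohen over the ground model.

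First I would carry out the core fusion. Starting from $T \in \MM_{\ideal{I}}$ and continuous $f : [T] \to 2^\omega$, inductively construct $T_{n+1} \leq_n T_n$ so that, at each splitting node $\sigma \in \mathrm{split}_n(T_n)$, the set $\mathrm{succ}_{T_n}(\sigma)$ splits into a stabilising piece $A^{\mathrm{s}}_\sigma$ of indices $i$ for which $f[[T_n|_{\sigma^\frown i}]]$ can be further shrunk inside $\MM_{\ideal{I}}$ to have diameter less than $2^{-n}$, and a complementary splitting piece $A^{\mathrm{d}}_\sigma$. Exactly one of these is $\ideal{I}$-positive. Keeping $A^{\mathrm{s}}_\sigma$ throughout the fusion yields $S \in \MM_{\ideal{I}}$ with $f|_{[S]}$ constant; keeping $A^{\mathrm{d}}_\sigma$ and, at each step, further separating the images $f[[T_n|_{\sigma^\frown i}]]$ into pairwise disjoint clopen subsets of $2^\omega$ yields $S$ with $f|_{[S]}$ one-to-one, provided the separation preserves $\ideal{I}$-positivity.

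Next I would identify the failure of both alternatives with the addition of a Cohen real and with the Katětov reduction. If neither branch of the fusion succeeds, the failure concentrates at some $\sigma$ with $X := \mathrm{succ}_T(\sigma) \notin \ideal{I}$, and the recursion furnishes a one-to-one map $\pi : 2^{<\omega} \to X$ together with conditions $T_\tau \leq T|_{\sigma^\frown \pi(\tau)}$ whose $f$-images form a dyadic system of shrinking clopen sets in $2^\omega$. Any nowhere dense $A \subseteq 2^{<\omega}$ is avoided by a dense subtree of $2^{<\omega}$, which forces $\pi^{-1}[A] \in \ideal{I}|_X$; hence $\nwd \leq_{K} \ideal{I}|_X$. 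The same dyadic system forces $f(\dot{r}_{\mathrm{gen}})$ to be Cohen over the ground model, since every basic clopen of $2^\omega$ is hit below every condition. Conversely, a witness $\pi : 2^{<\omega} \to X$ to $\nwd \leq_{K} \ideal{I}|_X$ allows one to build $T^* \in \MM_{\ideal{I}}$ and continuous $f^* : [T^*] \to 2^\omega$ whose $n$-th coordinate is read from the $n$-th splitting along $\pi$; the Katětov property of $\pi$ translates into the density in $\MM_{\ideal{I}}$ below $T^*$ of the conditions forcing $f^*(\dot{r}_{\mathrm{gen}})$ into any prescribed basic clopen of $2^\omega$.

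The main obstacle is the preservation of $\ideal{I}$-positivity in the injective alternative of the fusion: after separating images into disjoint clopen sets, the natural restriction of $\mathrm{succ}_{T_n}(\sigma)$ may cease to be positive unless the separation is carried out inside a countable exhaustion of the closed $\ideal{I}$-small sets witnessing the Borel part of $m^0(\ideal{I})$. This is precisely where Theorem \ref{SabokZaplDich} is indispensable: it guarantees that $m^{0}_{\mathcal{B}}(\ideal{I})$ is $\sigma$-generated by closed sets, so positivity is maintained through the fusion. Once this delicate step is in place, both the Katětov equivalence and the dichotomy of the second clause fall out of the same construction.
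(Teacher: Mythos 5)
First, a bookkeeping point: the paper does not prove this statement at all — it is quoted from Sabok and Zapletal \cite{SabZapl} — so your attempt is being measured against the original argument, which is substantially more involved than your sketch.

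There is a genuine gap, and it sits exactly where the theorem's content lies. Your node-by-node dichotomy is ill-posed: for every successor $i$, continuity of $f$ already lets you restrict $T_n|_{\sigma^\frown i}$ below a sufficiently long node so that the image has diameter less than $2^{-n}$, so your ``stabilising piece'' $A^{\mathrm{s}}_\sigma$ is all of $\mathrm{succ}_{T_n}(\sigma)$ and the claimed alternative collapses; moreover ``exactly one of these is $\mathcal{I}$-positive'' is unjustified in any case, since an $\mathcal{I}$-positive set can split into two positive pieces ($\mathcal{I}$ is not maximal). The correct dichotomy is not ``small image versus large image'' but ``images can be collapsed to a single point on a positive set of successors versus images can be separated into pairwise disjoint clopen sets on a positive set of successors,'' and deciding which horn holds is precisely where the ideal $\nwd$ and the Katětov reduction enter. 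Your proposed fix for the injective horn — that positivity is preserved ``because $m^{0}_{\mathcal{B}}(\mathcal{I})$ is $\sigma$-generated by closed sets'' — is a non sequitur: $\sigma$-generation by closed sets gives continuous reading of names, but it says nothing about whether one can separate the images of positively many successors while keeping the set of retained successors $\mathcal{I}$-positive. Indeed for $\mathcal{I}=\nwd$ itself the separation genuinely fails (there $\MM_{\mathcal{I}}$ adds Cohen reals), so no argument of the generality you invoke can close this step. The passage from ``neither horn succeeds'' to the existence of a map witnessing $\nwd\leq_{K}\mathcal{I}|_{X}$ and to the Cohen-ness of $f(\dot{r}_{\mathrm{gen}})$ is asserted rather than proved, and this is the hard part of Sabok–Zapletal's theorem (their proof runs through a canonization analysis, not a single fusion). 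Finally, a smaller but real error: your reduction map points the wrong way. With the paper's convention, $\nwd\leq_{K}\mathcal{I}|_{X}$ requires $\pi:X\rightarrow 2^{<\omega}$ with $\pi^{-1}[A]\in\mathcal{I}|_{X}$ for every $A\in\nwd$; your one-to-one $\pi:2^{<\omega}\rightarrow X$ has preimages landing in $2^{<\omega}$, so as written it does not even have the right type to witness the reduction. The easy direction (a Katětov witness yields a Cohen real, as in the paper's Proposition for Laver trees) is fine in spirit, but the two substantive implications are not established by your outline.
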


One of the implications of the first part of the theorem works for Laver trees with an identical proof. We include it for the sake of completeness 
\begin{proposition}\label{LaverAddsCohen}
If $\nwd\leq_{K}\mathcal{I}|_{X}$ for some $X\notin \mathcal{I}$ then $\LL_{\mathcal{I}}$ adds Cohen reals.
\end{proposition}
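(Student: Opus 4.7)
The plan is to mimic the Sabok--Zapletal argument for the corresponding direction of Theorem~\ref{SabZapl}, using the fact that for Laver trees the density argument needed for Cohenness only has to be carried out at the first splitting node above the stem. Let $\pi : X \to 2^{<\omega}$ witness $\nwd \leq_{K} \mathcal{I}|_{X}$. As a harmless preprocessing, I would first shrink $X$ by removing $\pi^{-1}[\{\emptyset\}]$, which lies in $\mathcal{I}|_X$ because $\{\emptyset\} \in \nwd$; the restriction of $\pi$ still witnesses the Katetov reduction, and now $\pi(n)\neq\emptyset$ on $X$, so the candidate Cohen real will genuinely be an element of $2^{\omega}$.

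Next I would work below the full Laver tree $T_0 := X^{<\omega}$, which belongs to $\LL_{\mathcal{I}}$ since $succ_{T_0}(\sigma)=X\in\mathcal{I}^+$ at every node. For the $\LL_{\mathcal{I}}$-generic $\dot{x}$ below $T_0$ I declare the Cohen candidate to be $\dot{y} := \pi(\dot{x}(0))^{\frown}\pi(\dot{x}(1))^{\frown}\pi(\dot{x}(2))^{\frown}\cdots$. To verify that $T_0 \Vdash \dot{y}$ is Cohen, I take an arbitrary closed nowhere dense $F\subseteq 2^{\omega}$ coded in $V$, associate to it the tree $T_F := \{\sigma\in 2^{<\omega}: [\sigma]\cap F\neq\emptyset\}\in\nwd$, pick an arbitrary $S\leq T_0$ in $\LL_{\mathcal{I}}$ with stem $\sigma_S$, and build $S'\leq S$ forcing $\dot{y}\notin F$. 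Setting $y_{pre} := \pi(\sigma_S(0))^{\frown}\cdots^{\frown}\pi(\sigma_S(|\sigma_S|-1))$ for the prefix of $\dot{y}$ already committed by the stem, I may assume $y_{pre}\in T_F$ (otherwise $S':=S$ already works), and then I pass to the shifted tree $T_F' := \{\tau\in 2^{<\omega}: y_{pre}^{\frown}\tau\in T_F\}$, which remains in $\nwd$ by a routine check. Katetov reducibility gives $\pi^{-1}[T_F']\in\mathcal{I}|_X$, so $A:=succ_S(\sigma_S)\setminus\pi^{-1}[T_F']$ is $\mathcal{I}$-positive, and the condition $S'$ defined by keeping stem $\sigma_S$, setting $succ_{S'}(\sigma_S):=A$, and letting $S'|_{\sigma_S^{\frown}n}:=S|_{\sigma_S^{\frown}n}$ for $n\in A$ is a Laver condition below $S$ that forces $\pi(\dot{x}(|\sigma_S|))\notin T_F'$, i.e.\ the initial segment $y_{pre}^{\frown}\pi(\dot{x}(|\sigma_S|))$ of $\dot{y}$ to lie outside $T_F$, hence $\dot{y}\notin F$.

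The only point where the plan could slip is the shift step: naively applying the Katetov reduction to $T_F$ itself would avoid $T_F$ at the wrong place in $\dot{y}$ rather than at an initial segment; replacing $T_F$ by its shift $T_F'$ by the stem-committed prefix $y_{pre}$ is what makes the single-level pruning work. Once this is in place, no fusion is required, and the Laver argument really does run identically to the Miller one.
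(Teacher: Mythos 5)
Your proof is correct and is essentially the paper's argument: the paper simply declares that $\dot{c}=\pi(\dot{r}_{\mathrm{gen}}(0))^{\frown}\pi(\dot{r}_{\mathrm{gen}}(1))^{\frown}\cdots$ is Cohen (implicitly working below a condition with values in $X$), and your verification via the stem-level pruning by $\pi^{-1}[T_F']$ for the shifted nowhere dense tree is exactly the routine density check being omitted there. The preprocessing removing $\pi^{-1}[\{\emptyset\}]$ is a sensible extra bit of care and does not change the substance.
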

\begin{proof}
    Suppose that $\pi:X\rightarrow 2^{<\omega}$ is such that $\pi^{-1}[Y]\in\mathcal{I}$ for every $\nwd$-positive $Y\subseteq 2^{<\omega}$. The real $\dot{c}$ given by $\dot{c}=\pi(\dot{r}_{gen}(0))^{\frown}\pi(\dot{r}_{gen}(1))^{\frown}...$ is a Cohen real.
\end{proof}

We were not able to prove the opposite implication for Laver trees (see Question \ref{doesLavAddCoh}).

\begin{proposition}
If $\MM_{\mathcal{I}}$ does not have the constant or 1-1 property, then $\LL_{\mathcal{I}}$ does not have it either.
\end{proposition}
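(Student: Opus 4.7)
The plan is to take a witness $(T,f)$ to the failure of the constant-or-1-1 property for $\MM_\mathcal{I}$ and, by embedding a Laver tree into $T$, transfer it to a witness $(T'',g)$ for $\LL_\mathcal{I}$. The idea is to build a ``Laver skeleton'' inside the Miller tree $T$: a Laver tree $T''\in\LL_\mathcal{I}$ together with a continuous injection $e:[T'']\to[T]$ such that every splitting decision in $T''$ corresponds to a node of $T$ with $\mathcal{I}$-positive successors. Because a direct Laver subtree of $T$ need not exist (the Miller tree may have long non-splitting stretches), we do not take $T''\subseteq T$; instead $T''$ lives on $\omega^{<\omega}$ and $e$ ``contracts'' the non-splitting segments of $T$.

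The construction is recursive. Pick $\tau_\emptyset\in T$ with $succ_T(\tau_\emptyset)\notin\mathcal{I}$ (possible by the Miller property) and set $e(\emptyset)=\tau_\emptyset$, declaring the stem of $T''$ to be $\emptyset$. Given $\tau_\sigma=e(\sigma)$, let $succ_{T''}(\sigma)=succ_T(\tau_\sigma)$ and for each $i\in succ_{T''}(\sigma)$ pick $\tau_{\sigma^\frown i}\in T$ extending $\tau_\sigma{}^\frown i$ with $succ_T(\tau_{\sigma^\frown i})\notin\mathcal{I}$. This yields a Laver tree $T''\in\LL_\mathcal{I}$ (every node extending the stem has $\mathcal{I}$-positive successors), and extending $e$ by $e(x)=\bigcup_n e(x|_n)$ gives a continuous injection $[T'']\to[T]$. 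Set $g=f\circ e$.

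To verify that $(T'',g)$ witnesses failure for $\LL_\mathcal{I}$, assume towards contradiction that some $S''\leq T''$ in $\LL_\mathcal{I}$ makes $g|_{[S'']}$ constant or 1-1. Let $\hat S=\{\tau\in T:\tau\subseteq e(x)\text{ for some }x\in[S'']\}$. For every $\tau\in\hat S$ one can pick $\sigma\in S''$ with $\tau\subseteq\tau_\sigma$; then $\tau_\sigma\in\hat S$ and $succ_{\hat S}(\tau_\sigma)\supseteq succ_{S''}(\sigma)\notin\mathcal{I}$, so $\hat S\subseteq T$ belongs to $\MM_\mathcal{I}$. Provided that $[\hat S]=e[[S'']]$, the bijection $e:[S'']\to e[[S'']]$ transports the constancy or injectivity of $g$ on $[S'']$ to the same property of $f$ on $[\hat S]$, contradicting the choice of $(T,f)$.

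The main obstacle is establishing $[\hat S]=e[[S'']]$, i.e. ruling out branches of $\hat S$ outside the image of $e$. The inclusion $\supseteq$ is immediate. For the converse, the key observation is that the only branching of $\hat S$ happens at the skeleton nodes $\tau_\sigma$ for $\sigma\in S''$, where $succ_{\hat S}(\tau_\sigma)=succ_{S''}(\sigma)$, while between $\tau_\sigma{}^\frown i$ and $\tau_{\sigma^\frown i}$ the construction of $e$ has fixed a single path in $T$ that $\hat S$ is forced to follow. A level-by-level argument then shows that any $y\in[\hat S]$ determines a unique branch $x=(i_0,i_1,\ldots)\in[S'']$ with $y=e(x)$, completing the proof.
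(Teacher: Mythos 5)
Your proposal is correct and is essentially the paper's argument: the paper proves the contrapositive by collapsing a Miller tree onto its splitting skeleton to obtain an $\mathcal{I}$-Laver tree $T'$ with a transferred continuous function $f'$, and pulls any constant-or-1-1 Laver restriction back to a Miller restriction, which is exactly your correspondence $e$, $g=f\circ e$ and $\hat S$ (including the verification that $[\hat S]=e[[S'']]$, which the paper leaves implicit). Your explicit choice of skeleton nodes with $\mathcal{I}$-positive successor sets is the right normalization and matches the paper's intent.
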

\begin{proof}
Assume $\LL_{\mathcal{I}}$ has the constant or 1-1 property, $T\in\MM_{\mathcal{I}}$ and $f:[T]\rightarrow 2^{\omega}$ is continuous. Let $T'\in\LL_{\mathcal{I}}$ be the unique tree which is constructed from $T$ by "removing the parts in between the splinodes", i.e. $stem(T')=stem(T)$ and if for any tree $T$, if $s^{T}:\omega^{<\omega}\rightarrow split(T)$ is the unique $\subseteq$-isomorphism which preserves lexicographical order then $succ_{T'}(s^{T'}(stem^{\frown}\tau))=succ_{T}(s^{T}(stem^{\frown}\tau))$ for each $\tau\in\omega^{<\omega}$. Next let $f':[T']\rightarrow 2^{\omega}$ be a continuous function defined for $x\in[T']$ as $f'(x)=f(\bigcup_{k}s^{T}(\tau_{k}))$ where $\{\tau_{k}:k\in\omega\}\subseteq\omega^{<\omega}$ is such that $\{x_{n}: n\in\omega\}\cap split(T')=\{s^{T'}(\tau_{k}):k\in\omega\}$. Now clearly any restriction $S'\in\LL_{\mathcal{I}}$, $S'\leq T'$ with $f'|_{[S']}$ being constant or 1-1 produce $S\in\MM_{\mathcal{I}}$, $S\leq T$ with constant or 1-1 restriction $f|_{[S]}$.
\end{proof}

We will now focus on proving that $\LL_{\mathcal{I}}$ has the constant of 1-1 property for a certain class of not near maximal $P^{+}$-ideals. Recall that a filter $\mathcal{F}$ is \emph{near maximal} if there is a finite to one $h: \omega \rightarrow \omega$ such that $f(\mathcal{F})$ is an ultrafilter. An ideal $\mathcal{I}$ is \emph{near maximal} if the dual filter $\mathcal{I}^*$ is near maximal. By Talagrand's lemma (see \cite{talagrand} or \cite{BJ}), all meager ideals are not near maximal, so in particular all analytic ideals are not maximal. Note that, if $\ideal{I}$ is a $P^+$-ideal, and $h$ is a finite to one function such that $h(\ideal{I})$ is a maximal ideal, then $h(\ideal{I})^*$ has to be a $P$-point. Therefore, it is consistent that all $P^{+}$-ideals are not near maximal. On the other hand, under filter dichotomy, all non-meager ideals are near maximal (see \cite{BlassShelah}). The proof of the main theorem of this section resembles the proof of the analogous theorem for the usual Laver trees, which appears in \cite{BreKhWoh}. Before proving this, we need a list of several lemmas.

\begin{lemma}\label{star_one}
    If $\mathcal{I}$ is a $P^+$-ideal, then for every $T\in\LL_{\mathcal{I}}$ and every continuous $f:[T]\rightarrow 2^{\omega}$ there is $S\leq_{0}T$ and $z\in 2^{\omega}$ such that for every $N\in\omega$, $\{i\in succ_{S}(stem): f[[S|_{stem^{\frown}i}]]\nsubseteq [z|_{N}]\}\in\Fin.$
\end{lemma}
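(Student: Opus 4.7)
The plan is to apply pure decision tree-by-tree above $s := stem(T)$ in order to shrink each $T|_{s^\frown i}$ so that it forces long prefixes of $f(\dot{r}_\mathrm{gen})$, and then to invoke the $P^+$-property of $\ideal{I}$ to glue these local shrinkings into a single Laver subtree.

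Let $A := succ_T(s)$, which is $\ideal{I}$-positive. For each $i \in A$, I would iteratively apply the pure decision property to the bit-names $f(\dot{r}_\mathrm{gen})(n) \in 2$, obtaining a decreasing chain of Laver trees $T|_{s^\frown i} = T_{i,0} \geq T_{i,1} \geq T_{i,2} \geq \ldots$, each $\leq_0$ the previous, with $T_{i,n} \Vdash f(\dot{r}_\mathrm{gen})|_n = z_i^{(n)}$ for a uniquely determined $z_i^{(n)} \in 2^n$; equivalently, $f[[T_{i,n}]] \subseteq [z_i^{(n)}]$. The $z_i^{(n)}$'s cohere into a single $z_i \in 2^\omega$. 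Because $\ideal{I}$ is an ideal, I can define $z \in 2^\omega$ together with a decreasing sequence of $\ideal{I}$-positive sets $A = A_0 \supseteq A_1 \supseteq A_2 \supseteq \ldots$ by bit-by-bit partitioning: given $A_n$ positive, one of the two pieces $\{i \in A_n : z_i(n) = b\}$ (for $b \in \{0,1\}$) is $\ideal{I}$-positive; pick that piece as $A_{n+1}$ and set $z(n) := b$. By construction, $A_n = \{i \in A : z_i|_n = z|_n\}$.

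The $P^+$-hypothesis now supplies an $\ideal{I}$-positive $A' \subseteq A$ with $A' \subseteq^* A_n$ for every $n$. Enumerating $A' = \{i_k : k \in \omega\}$ in increasing order, I define $S$ by $stem(S) := s$, $succ_S(s) := A'$, and $S|_{s^\frown i_k} := T_{i_k, k}$. Then $S \in \LL_\ideal{I}$ (since $A'$ is $\ideal{I}$-positive and each $T_{i_k,k}$ is a Laver tree) and $S \leq_0 T$. For every $N$, whenever $k \geq N$ and $i_k \in A_N$, one has $f[[S|_{s^\frown i_k}]] = f[[T_{i_k,k}]] \subseteq [z_{i_k}|_k] \subseteq [z_{i_k}|_N] = [z|_N]$, so the exceptional set is contained in $\{i_k : k < N\} \cup (A' \setminus A_N)$, which is finite.

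The delicate point is the bookkeeping that coordinates the depth $k$ of pure decision applied to $T_{i_k,k}$ with the pseudo-intersection index $k$ of $A'$ — this is exactly where the $P^+$-property substitutes for the elementary diagonalization one would use in the classical case $\ideal{I} = \Fin$. Once these two indices are made to agree, verifying $S \leq_0 T$ and the shrinking condition reduces to reading off the construction.
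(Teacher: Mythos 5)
Your proof is correct and takes essentially the same route as the paper: use pure decision to pass to $\leq_{0}$-smaller trees deciding longer and longer initial segments of $f(\dot{r}_{\mathrm{gen}})$, then use the $P^{+}$-property to glue via a positive pseudointersection at the stem. The only real difference is organizational: the paper shrinks the whole tree at each stage, so the decided values cohere into $z$ automatically and the pseudointersection is taken of the sets $succ_{T_{n}}(stem(T))$, whereas you shrink above each immediate successor separately and then recover the common $z$ (and the sets $A_{n}$) by your bit-by-bit positive-set selection.
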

\begin{proof}
    Assume that $T\in\LL_{\mathcal{I}}$ and that $f:[T]\rightarrow 2^{\omega}$ is continuous. Using the pure decision property, we construct a sequence $\{T_{n}:n\in\omega\}\subseteq\LL_{\mathcal{I}}$ such that $T_{0}=T$, $T_{n+1}\leq_{0} T_{n}$ and that $T_{n}$ decides the value of $f(\dot{r}_{gen})|_{n}$ to be $x_{n}$. Then, as $\mathcal{I}$ is $P^+$, there is a pseudointersection $A\in\mathcal{I}^{+}$ of the family $\{succ_{T_{n}}(stem(T)):n\in\omega\}$. Let $x=\bigcup_{n}x_{n}\in2^{\omega}$ and let $S\in\LL_{\mathcal{I}}$ be such that $stem(S)=stem(T)$, $succ_{S}(stem(S))=A$ and for $n\in A$, $S|_{stem(S)^{\frown}n}=T_{k}|_{stem(S)^{\frown}n}$ where $k\in\omega$ is the last such that $n\in succ_{T_{k}}(stem(S))$.
\end{proof}

\begin{lemma}\label{laverdiscrete}
    Let $\mathcal{I}$ be a P$^+$-ideal and let $\{ A_n : n \in \omega\} \subseteq \mathcal{I}^+$, $z \in 2^\omega$ and $f_n : A_n \rightarrow 2^\omega$ be such that, for all $n,k \in \omega$, $f^{-1}_n(\langle z \restriction k \rangle \cap f[A_n]) \in \mathcal{I}^+$ and $z \notin f_n[A_n]$. Then there are $B_n \subseteq A_n$ such that $B_n \in \mathcal{I}^+$ and $\bigcup_{n \in \omega} f_n[B_n]$ is discrete.
\end{lemma}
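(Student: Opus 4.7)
The plan is to apply the $P^+$ property of $\mathcal{I}$ independently at each $n$ and then trim. For each $n$, the family $\{A_n \cap f_n^{-1}[\cone{z \restriction k}] : k \in \omega\}$ is a decreasing sequence of $\mathcal{I}$-positive sets, so there is a pseudointersection $C_n \in \mathcal{I}^+$, which we may take contained in $A_n$, such that $C_n \setminus f_n^{-1}[\cone{z \restriction k}]$ is finite for every $k$. I then define $B_n := C_n \cap f_n^{-1}[\cone{z \restriction n}]$; this differs from $C_n$ by a finite set, so $B_n \in \mathcal{I}^+$, and $f_n[B_n] \subseteq \cone{z \restriction n}$.

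To verify that $D = \bigcup_{n} f_n[B_n]$ is discrete, fix $x \in f_n[B_n]$ for some $n$. Since $z \notin f_n[A_n]$ we have $x \neq z$, and there is a unique $\ell$ with $x \in \cone{z \restriction \ell} \setminus \cone{z \restriction \ell+1}$; note $\ell \geq n$ because $x \in \cone{z \restriction n}$. The crucial observation is that $\cone{x \restriction \ell+1}$ is disjoint from $\cone{z \restriction \ell+1}$, since $x(\ell) \neq z(\ell)$. On the one hand, for every $m > \ell$ one has $f_m[B_m] \subseteq \cone{z \restriction m} \subseteq \cone{z \restriction \ell+1}$, so $f_m[B_m] \cap \cone{x \restriction \ell+1} = \emptyset$. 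On the other hand, for each of the finitely many $m \leq \ell$, the set $C_m \setminus f_m^{-1}[\cone{z \restriction \ell+1}]$ is finite, whence $f_m[B_m] \cap \cone{x \restriction \ell+1}$ is finite. Altogether $D \cap \cone{x \restriction \ell+1}$ is finite, and choosing $s \geq \ell+1$ large enough, the clopen neighborhood $\cone{x \restriction s}$ of $x$ meets $D$ only at $x$, witnessing that $x$ is isolated in $D$.

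The main balancing act is between positivity of $B_n$ and convergence of $f_n[B_n]$ to $z$. The $P^+$ property makes each $f_n[C_n]$ accumulate to $z$ in the sense that all but finitely many of its points lie in any prescribed cone about $z$, but the pseudointersections for different $n$ are a priori unrelated, so the various sets $f_n[C_n]$ could interfere with one another at points $x \neq z$. The fix is to trim each $C_n$ at the $n$-th level of $z$: this preserves positivity (only a finite trim is needed, by the pseudointersection property) while forcing the cones $\cone{z \restriction n}$ that contain the later $f_m[B_m]$ to shrink with $n$, which is precisely what makes the contributions from $m > \ell$ vanish in the discreteness verification.
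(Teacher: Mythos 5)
Your proof is correct and takes essentially the same route as the paper's: apply the $P^+$ property to the decreasing sequence $\{A_n\cap f_n^{-1}[\langle z\restriction k\rangle]:k\in\omega\}$ to get a positive set whose $f_n$-image converges to $z$, then trim at level $n$ so that $f_n[B_n]\subseteq\langle z\restriction n\rangle$, which makes the whole union converge to $z\notin\bigcup_n f_n[B_n]$ and hence be discrete. (Your $B_n=C_n\cap f_n^{-1}[\langle z\restriction n\rangle]$ is in fact the correct form of the paper's $B_n=A_n'\setminus f_n^{-1}(\langle z\restriction n\rangle)$, which as printed would be a finite set.)
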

\begin{proof}
    Using the P$^+$ property, it is possible to restrict to some $A_n' \subseteq A_n$ such that $f_n[A_n']$ converges to $z$, and therefore $f_n[A_n']$ is discrete. Let
$$
B_n = A_n' \setminus f_n^{-1}(\langle z \restriction n \rangle)
$$
then $B_n \subseteq A_n$, $B_n \in \mathcal{I}^+$ and, since $\bigcup_{n \in \omega} f_n[B_n] \setminus f_n^{-1}(\langle z \restriction n \rangle)$ is finite for every $k \in \omega$, then  $\bigcup_{n \in \omega} f_n[B_n]$ is discrete.
\end{proof}

\begin{lemma}
    Let $\mathcal{I}$ be a P$^+$-ideal such that it is not near maximal, $A, B \in \mathcal{I}^+$, $z \in 2^\omega$, $f: A \rightarrow 2^\omega$ and $g: B \rightarrow 2^\omega$ be such that $z \notin f[A] \cup g[B]$ and both $f[A]$ and $g[B]$ converge to $z$. Then, there are $A' \subseteq A$ and $B' \subseteq B$ such that $f[A'] \cap g[B'] = \emptyset$.
\end{lemma}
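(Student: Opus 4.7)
The plan is to perform a series of reductions ending in a partitioning argument that invokes the \emph{not near maximal} hypothesis. First, I would reduce to the case $f[A] = g[B]$: if $A \cap f^{-1}(2^\omega \setminus g[B]) \in \mathcal{I}^+$, then this set together with $B' = B$ already produces the conclusion (the images are disjoint by construction); otherwise $A \cap f^{-1}(g[B]) \in \mathcal{I}^+$, and one replaces $A$ by this set. Running the symmetric argument on $B$ leaves us with $f[A] = g[B] =: E$, a countable subset of $2^\omega \setminus \{z\}$ that converges to $z$.

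The second reduction uses the $P^+$ property to make $f$ and $g$ finite-to-one. If some fibre $A_e = f^{-1}(e) \cap A$ is $\mathcal{I}$-positive, a short case analysis on the corresponding fibre $B_e = g^{-1}(e) \cap B$ either yields the pair directly (for example, when $B \setminus B_e \in \mathcal{I}^+$ one takes $A' = A_e$ and $B' = B \setminus B_e$) or reduces to the case in which all fibres of $f$ and $g$ are $\mathcal{I}$-small. In that case, the decreasing chain $A^{(n)} = A \setminus \bigcup_{i \leq n} f^{-1}(e_i)$ (for an enumeration $\{e_i : i \in \omega\}$ of $E$) is a sequence of $\mathcal{I}$-positive sets, so the $P^+$ property furnishes a pseudointersection $A'' \in \mathcal{I}^+$ on which $f$ is finite-to-one; the same argument applies to $g$.

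With $f, g$ now finite-to-one and $E$ convergent to $z$, I would partition $2^\omega \setminus \{z\}$ into the clopen cylinders $C_k = [z \restriction k] \setminus [z \restriction (k+1)]$. Each $E \cap C_k$ is finite, so the maps $\alpha: A \to \omega$ and $\beta: B \to \omega$ defined by $f(a) \in C_{\alpha(a)}$ and $g(b) \in C_{\beta(b)}$ are themselves finite-to-one. The proof is complete once we find $P \subseteq \omega$ such that $\alpha^{-1}(P) \cap A$ and $\beta^{-1}(\omega \setminus P) \cap B$ are both $\mathcal{I}$-positive: setting $A' = \alpha^{-1}(P) \cap A$ and $B' = \beta^{-1}(\omega \setminus P) \cap B$, the images $f[A'] \subseteq \bigcup_{k \in P} C_k$ and $g[B'] \subseteq \bigcup_{k \notin P} C_k$ are automatically disjoint.

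The hard part is producing such a partition $P$. A direct check with dual filters shows that $P$ can be found unless the pushforward ideals $J_A = \{S \subseteq \omega : \alpha^{-1}(S) \cap A \in \mathcal{I}\}$ and $J_B = \{S \subseteq \omega : \beta^{-1}(S) \cap B \in \mathcal{I}\}$ are both ultrafilters on $\omega$ that coincide. In that problematic case, the combined finite-to-one map $\gamma: A \sqcup B \to \omega$ with $\gamma|_A = \alpha$ and $\gamma|_B = \beta$ satisfies $\gamma_*(\mathcal{I}|_{A \sqcup B}) = J_A \cap J_B = J_A$, witnessing that $\mathcal{I}|_{A \sqcup B}$ is near maximal; the key obstacle is to extend $\gamma$ to a finite-to-one function on all of $\omega$ whose pushforward remains ultra, so as to contradict the hypothesis that $\mathcal{I}$ itself is not near maximal and thereby produce the required $P$.
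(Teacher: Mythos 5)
You reach the same combinatorial core as the paper: after passing to the departure-level maps (your $\alpha,\beta$; the paper's $F,G$ with $F(n)=k$ iff $f(n)\restriction k=z\restriction k$ and $f(n)\restriction (k+1)\neq z\restriction (k+1)$), which are finite-to-one because the convergence is meant sequence-wise, everything reduces to splitting the pushforward ideals. Under that reading your first two reductions are unnecessary (fibres of $f$ and $g$ are automatically finite), and under your set-convergence reading the second reduction actually stalls in the corner case where $A\setminus f^{-1}(e)$ and $B\setminus g^{-1}(e)$ are both in $\mathcal{I}$ --- there the conclusion with $\mathcal{I}$-positive $A',B'$ is simply false, which shows the sequence reading is the intended one. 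You are also working harder than necessary at the end: there is no need for the coincidence $J_A=J_B$ or for a combined map on $A\sqcup B$ (which is not even a subset of $\omega$, and $\alpha,\beta$ may disagree on $A\cap B$). If $J_A=\alpha_*(\mathcal{I}|_A)$ is non-maximal, pick $A''$ with $\alpha^{-1}(A''),\alpha^{-1}(\omega\setminus A'')\in\mathcal{I}^+$; since $B\in\mathcal{I}^+$, one of $\beta^{-1}(A''),\beta^{-1}(\omega\setminus A'')$ is positive, and pairing it with the opposite half on the $A$-side finishes, because points with different departure levels are distinct. This is essentially the paper's argument.

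The genuine gap is exactly the step you yourself flag as ``the key obstacle,'' and it cannot be closed by the extension trick you propose. Maximality of the pushforward of $\mathcal{I}$ restricted to a positive set, under a finite-to-one map defined only on that set, does not imply that $\mathcal{I}$ is near maximal in the paper's literal sense (a finite-to-one $h$ on all of $\omega$ with $h(\mathcal{I}^*)$ an ultrafilter). Consistently (e.g.\ under CH) there is a counterexample: let $U$ be a P-point on a set $A$, let $\mathcal{J}$ be its dual maximal ideal (which is $P^+$), and let $\mathcal{I}=\mathcal{J}\oplus\mathrm{Fin}$ on $\omega=A\cup B$. Then $\mathcal{I}$ is a $P^+$-ideal that is not near maximal --- for any finite-to-one $h$ one can split $h[B]$ into two infinite pieces, both of whose preimages meet $B$ in infinite, hence $\mathcal{I}$-positive, sets --- yet $\mathcal{I}|_A$ is already maximal; taking both sets equal to $A$ and $g=f$ injective with $f[A]$ converging to $z\notin f[A]$, any $\mathcal{I}$-positive $A',B'\subseteq A$ satisfy $f[A']\cap f[B']\neq\emptyset$. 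So no extension of $\gamma$ can yield the contradiction you want, and your final step (and indeed the statement read with the global definition of near maximal) is unprovable. What is actually needed --- and what the paper's proof silently invokes when it asserts the existence of $A''$ and $B''$ --- is the hereditary form of the hypothesis: there is no $\mathcal{I}$-positive $X$ and finite-to-one map on $X$ pushing $\mathcal{I}|_X$ onto a maximal ideal. With that reading the splitting of $J_A$ is immediate and the proof closes as in the previous paragraph; without it, your plan cannot be completed.
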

\begin{proof}
    Let $F: A \rightarrow \omega$ defined by $F(n) = k$ if and only if $f(n) \restriction k = z \restriction k$ but $f(n) \restriction k + 1 \neq z \restriction k + 1$. Define $G : B \rightarrow \omega$ in a similar way, using $g$ instead of $f$. Notice that both $F$ and $G$ are finite to one functions, so there must be $A", B" \subseteq \omega$ such that $A_0 = F^{-1}(A"), A_1 = F^{-1}(\omega \setminus A")$, $B_0 = G^{-1}(B"), B_1 = G^{-1}(\omega \setminus B")$ are all in $\mathcal{I}^+$. Then it is easy to show that, either $A_0 \cap B_0$ and $A_1 \cap B_1$ are in $\mathcal{I}^+$, or $A_1 \cap B_0$ and $A_0 \cap B_1$ are in $\mathcal{I}^+$.    
\end{proof}

\begin{lemma}
    Let $\mathcal{I}$ be a P$^+$-ideal such that it is not near maximal, $\{ B_n : n \in \omega\} \subseteq \mathcal{I}^+$, $z \in 2^\omega$ and $f_n : B_n \rightarrow 2^\omega$ such that $B_n \in \mathcal{I}^+$, $f_n[B_n]$ converges to $z$, then there are $C_n \subseteq B_n$ such that $C_n \in \mathcal{I}^+$, $\bigcup_{n \in \omega} f_n[C_n]$ is discrete, and for each $i,j \in \omega$, $i \neq j$ implies that $f_i[C_i] \cap f_j[C_j] = \emptyset.$\label{star_two}
\end{lemma}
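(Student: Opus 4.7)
The plan is to reduce the problem to a disjointness question about ``first-difference'' positions with $z$. We may assume $z \notin f_n[B_n]$ for every $n$ (the degenerate case $f_n^{-1}(\{z\}) \in \mathcal{I}^+$ can be dealt with separately by taking $C_n = f_n^{-1}(\{z\})$ for that $n$ and treating the rest). For each $n$, define the first-difference map $\alpha_n\colon B_n\to\omega$ by $\alpha_n(m)=\min\{k:f_n(m)(k)\neq z(k)\}$. Convergence of $f_n[B_n]$ to $z$ forces $\alpha_n$ to be finite-to-one.

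The key observation is that it suffices to find pairwise disjoint $T_n\subseteq\omega$ such that $C_n:=\alpha_n^{-1}(T_n)\in\mathcal{I}^+$. Indeed, for $i\neq j$, any $m\in C_i$ and $m'\in C_j$ satisfy $\alpha_i(m)\neq\alpha_j(m')$; at the smaller of these two positions one of $f_i(m),f_j(m')$ equals $z$ while the other does not, so $f_i(m)\neq f_j(m')$. Thus $f_i[C_i]\cap f_j[C_j]=\emptyset$. Moreover the union $\bigcup_n f_n[C_n]$ is discrete: for each $k$, only finitely many $T_n$ meet $[0,k]$, so $\bigcup_n f_n[C_n]\setminus\langle z\restriction (k{+}1)\rangle$ is finite, which means $z$ is the only possible accumulation point, and $z$ is not in any $f_n[C_n]$.

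To produce the $T_n$, enumerate all pairs $(i_k,j_k)_{k\in\omega}$ with $i<j$ and build decreasing sequences $B_n^k\subseteq B_n^{k-1}$ in $\mathcal{I}^+$ with $B_n^0=B_n$ as follows. At stage $k$ apply the preceding lemma to $f_{i_k}$ and $f_{j_k}$ restricted to $B_{i_k}^{k-1}$ and $B_{j_k}^{k-1}$ respectively (both images still converge to $z$ since passage to an $\mathcal{I}^+$-subset preserves that property) to obtain $B_{i_k}^k, B_{j_k}^k \in \mathcal{I}^+$ with $\alpha_{i_k}[B_{i_k}^k]\cap\alpha_{j_k}[B_{j_k}^k]=\emptyset$; for the other indices keep $B_n^k=B_n^{k-1}$. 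By the $P^+$-property, for each $n$ extract a pseudointersection $C_n'\in\mathcal{I}^+$ of the decreasing sequence $(B_n^k)_{k\in P_n}$, where $P_n=\{k:n\in\{i_k,j_k\}\}$.

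The main obstacle is that a pseudointersection only satisfies $C_n'\subseteq^* B_n^k$, so for each pair $(i,j)$ the sets $\alpha_i[C_i']$ and $\alpha_j[C_j']$ may still meet in a \emph{finite} set, and the cumulative effect over infinitely many pairs could in principle fail to be $\mathcal{I}$-small. To defeat this I would strengthen the inductive step: maintain at every stage a finite ``committed core'' $F_n^k\subseteq B_n^k$ (with pairwise disjoint $\alpha$-images $\alpha_n[F_n^k]$), and use a slight strengthening of the preceding lemma that allows a prescribed finite subset to be preserved under shrinking (this is possible since after running the preceding lemma one only needs to remove the finitely many points with forbidden $\alpha$-values, which stays $\mathcal{I}^+$). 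This ensures $C_n' \supseteq \bigcup_k F_n^k$ behaves well on its committed part, and a final diagonalization, exploiting the finite-to-one character of $\alpha_n$, removes the remaining finite errors from the pseudointersection without destroying $\mathcal{I}$-positivity.
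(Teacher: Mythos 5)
Your reduction is attractive and mostly sound: under the intended reading of ``$f_n[B_n]$ converges to $z$'' (all but finitely many indices land in each neighbourhood of $z$) the first-difference maps $\alpha_n$ are indeed finite-to-one, and pairwise disjointness of the value sets $T_n=\alpha_n[C_n]$ does yield, in one stroke, both $f_i[C_i]\cap f_j[C_j]=\emptyset$ and discreteness of the union (at most $k+1$ of the disjoint $T_n$ can meet $[0,k]$, and convergence bounds each contribution outside $\langle z\restriction (k+1)\rangle$); this nicely subsumes the separate discretization step (Lemma \ref{laverdiscrete}) that the paper performs first. Two smaller caveats: the preceding lemma as stated only gives $f[A']\cap g[B']=\emptyset$, not disjointness of the $\alpha$-images, so the strengthened form you invoke needs an argument (it is true, e.g.\ by applying the stated lemma to the auxiliary functions sending $m$ to the canonical point agreeing with $z$ exactly up to $\alpha(m)$); and your degenerate-case fix $C_n=f_n^{-1}(\{z\})$ does not work (if some image is $\{z\}$ and another is an infinite set converging to $z$, the union is not discrete) --- the lemma is meant to be applied, as in the preceding lemmas, with $z\notin f_n[B_n]$.

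The genuine gap is exactly at the step you yourself flag, which is where the paper's proof does its real work. After the pseudointersections $C_n'$, each pair $(i,j)$ leaves only a finite overlap $\alpha_i[C_i']\cap\alpha_j[C_j']$, but for a fixed $n$ there are infinitely many such finite sets and their union may exhaust $\alpha_n[C_n']$; your proposed remedy (``committed cores'' plus an unspecified ``final diagonalization'') is not an argument. The finite cores $F^k_n$ cannot carry $\mathcal{I}$-positivity (an increasing union of finite sets need not be positive), and you never explain why, for fixed $n$, the residual overlaps with \emph{all} other indices can be deleted simultaneously while staying in $\mathcal{I}^+$ --- in general they cannot. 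What is needed is an order-asymmetric cleanup: recursively set $C_n=C_n'\setminus\alpha_n^{-1}\bigl(\bigcup_{i<n}\alpha_i[C_i]\bigr)$; for each $n$ only the finitely many earlier indices are involved, each contributing a finite set of values, so by finite-to-one-ness only finitely many points are removed and positivity is preserved, while the pairs with $j>n$ are handled when $C_j$ is constructed. This is precisely the role of the paper's closing definition $C_n=S^n_n\setminus f_n^{-1}\bigl(\bigcup_{i<n}f_i[S^i_i]\bigr)$, backed by its recursive condition that $f_m[S^i_m]\cap f_i[S^i_i]$ is finite for $m\geq i$. With that replacement your outline closes; as written, the decisive step is missing.
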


\begin{proof}
    By Lemma \ref{laverdiscrete}, we may assume that $\bigcup_{n \in \omega} f_n[B_n]$ is discrete. Recursively, we are going to construct a family of sequences $\{ S^i_j : i \leq j, j \in \omega \}$ such that, for each $ i \leq j, j \in \omega$,
    \begin{enumerate}
        \item $S^i_j \subseteq B_j$ is $\mathcal{I}^+$, 
        \item $S^j_j \subseteq S^i_j$,
        \item for $m \geq i$, $f_m[S^i_m] \cap f_{i}[S^i_{i}]$ is finite.
    \end{enumerate}
    Assume we were able to carry out the construction of $\langle S^i_j : i \leq j, j \in \omega \rangle$ for all $i < k$, we are going to construct $\langle S^k_j : k \leq j, j \in \omega \rangle$: Let $S_k' = \bigcap_{i < k}S^i_k$ and recursively apply the previous remark to get two  families $\langle S'_\ell : k < \ell \rangle $ and $\langle S^k_\ell : k < \ell \rangle$ such that,
    \begin{itemize}
        \item $S'_\ell, S^k_\ell \in \mathcal{I}^+$,
        \item $S'_{\ell + 1} \subseteq S'_\ell \subseteq S_k'$,
        \item for all $i < k$, $S^k_\ell \subseteq S^i_\ell$,
        \item $f_k [S'_\ell] \cap f_\ell [S^k_\ell] = \emptyset$.
    \end{itemize}
    Finally let $S^k_k$ be a positive pseudo-intersection of the family $\langle S'_\ell : k < \ell \rangle $. It is routine to check that $\langle S^k_j : k \leq j, j \in \omega \rangle$ satisfies the properties we are looking for.

    Finally, let $C_n = S^n_n \setminus f_n^{-1}(\bigcup_{i < n}f_i [S^i_i])$. It is easy to see that the family $\langle C_n : n \in \omega \rangle$ satisfies the desired properties.
\end{proof}

We are now ready to prove the main theorem of this section.

\begin{theorem} \label{pplusconstant1}
If $\mathcal{I}$ is a P$^+$-ideal and $\mathcal{I}$ is not near maximal, then $\LL_{\mathcal{I}}$ has the constant or 1-1 property.
\end{theorem}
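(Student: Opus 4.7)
My plan is to adapt the proof for classical Laver trees from \cite{BreKhWoh}, using the three preceding lemmas. Given $T\in\LL_{\mathcal{I}}$ and a continuous $f:[T]\rightarrow 2^\omega$, I would construct a fusion sequence $T=T_0\geq_0 T_1\geq_1 T_2\geq_2\cdots$ and take $S=\bigcap_n T_n$. The $P^+$-hypothesis is what guarantees that $S\in\LL_{\mathcal{I}}$: at each splitnode we shrink the successor set countably many times during the fusion, and $P^+$ produces an $\mathcal{I}$-positive pseudointersection.

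At stage $n$, for every $\sigma\in split_n(T_n)$, first apply Lemma \ref{star_one} at $\sigma$ to obtain $z_\sigma\in 2^\omega$ and a restriction of $T_n|_\sigma$ such that for every $N\in\omega$ only finitely many $i\in succ(\sigma)$ satisfy $f[[T_n|_{\sigma^{\frown}i}]]\not\subseteq [z_\sigma|_N]$. A dichotomy then arises. If at some $\sigma$ the set $A=\{i:f[[T_n|_{\sigma^{\frown}i}]]=\{z_\sigma\}\}$ is $\mathcal{I}$-positive, I restrict to $A$; this forces $f$ to be constantly $z_\sigma$ on $[T_n|_\sigma]$, and I abort the fusion outputting this subtree, concluding the constant case. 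Otherwise, for every $\sigma$ the set $\{i:f[[T_n|_{\sigma^{\frown}i}]]\neq\{z_\sigma\}\}$ is $\mathcal{I}$-positive; after restricting, we may further assume $z_\sigma\notin f[[T_n|_{\sigma^{\frown}i}]]$ for all $i$ in a positive set, reducing the auxiliary case where $z_\sigma$ lies in the image without exhausting it by recursion inside the offending subtree (itself a member of $\LL_\mathcal{I}$).

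With this setup, I would apply Lemmas \ref{laverdiscrete} and \ref{star_two} to representative functions built one level below: set $B_i=succ_T(\sigma^{\frown}i)$ for each $i\in succ(\sigma)$ and let $f_i(j)$ be a chosen point of $f[[T_n|_{\sigma^{\frown}i^{\frown}j}]]$. Then $f_i[B_i]$ converges to $z_\sigma$ (by choosing the representatives deep enough), and Lemma \ref{star_two} yields positive $C_i\subseteq B_i$ with $f_i[C_i]$ pairwise disjoint. Combining this with another round of Lemma \ref{star_one} applied inside each child subtree lets me tighten $f[[T_{n+1}|_{\sigma^{\frown}i^{\frown}j}]]$ into the basic clopen neighborhood of $f_i(j)$ separating it from its neighbors. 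Maintained as a fusion invariant, this ensures that if the construction never aborts, any two distinct branches of $S$ diverge at some splitnode and are thereafter mapped into disjoint compact sets, so $f|_{[S]}$ is 1-1.

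The main obstacle will be the passage from the representative-level disjointness provided by Lemma \ref{star_two} to the full subtree-image disjointness required for injectivity. Handling it demands interleaving Lemmas \ref{star_one} and \ref{star_two} at successive fusion levels and checking that the additional shrinkings can be absorbed into the fusion scheme without destroying the $\mathcal{I}$-positivity of any $succ_S(\sigma)$. The not-near-maximal hypothesis enters exactly where Lemma \ref{star_two} is invoked, while the $P^+$-hypothesis underwrites both Lemma \ref{star_one} and the convergence of the fusion to a tree in $\LL_{\mathcal{I}}$.
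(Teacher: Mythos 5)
Your skeleton (fusion, Lemma \ref{star_one} at splitnodes, Lemma \ref{star_two} plus disjoint clopen sets for injectivity) matches the paper's, but the dichotomy you use to decide ``constant vs.\ 1-1'' is not the right one, and this is where the argument breaks. You abort into the constant case only if at some node $\sigma$ an $\mathcal{I}$-positive set of children $i$ satisfies $f[[T|_{\sigma^\frown i}]]=\{z_\sigma\}$, i.e.\ $f$ is already constant on \emph{entire} child subtrees. In the typical constant situation this never happens: constancy only emerges after trimming at \emph{every} level (positively many grandchildren, great-grandchildren, etc.\ keep the same limit value $z_\sigma$, while $f$ is not constant on any untrimmed child subtree). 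In that situation your ``otherwise'' branch is active, but the 1-1 construction cannot proceed either, because the limit values $z_{\sigma^\frown i}$ coincide with $z_\sigma$ on a positive set, the images $f[[S|_{\sigma^\frown i}]]$ all cluster at the same point and may pairwise intersect, and Lemma \ref{star_two} has nothing to separate. Your proposed fix --- ``recursion inside the offending subtree'' --- has no termination guarantee; this is exactly what the paper's ordinal rank on splitnodes is for: $rank(\sigma)=0$ when $\{n: z_{\sigma^\frown n}\neq z_\sigma\}\in\mathcal{I}^+$, higher ranks when positively many children have smaller rank, and the case ``rank undefined at some $\sigma$'' is a genuine terminal outcome that yields the constant case after a fusion trimming making all $z_\tau=z_\sigma$. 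In the 1-1 case the paper descends, below each terminal node, through a well-founded tree $G_\sigma$ of positive splittings down to rank-$0$ nodes, and only there applies Lemma \ref{star_two} --- to the functions $\tau\mapsto z_{\tau^\frown n}$ on the positive sets $\{n: z_{\tau^\frown n}\neq z_\tau\}$, not to arbitrary representatives of the images.

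A second, related gap is your passage from point-level to image-level disjointness, which you flag but do not solve. Separating arbitrarily chosen representatives $f_i(j)\in f[[T|_{\sigma^\frown i^\frown j}]]$ by disjoint clopen sets says nothing about the rest of those images, and ``choosing representatives deep enough'' does not make the whole image of a subtree small. The paper gets this for free from the conclusion of Lemma \ref{star_one} (condition (\ref{lem9p1})): after fixing disjoint clopen neighbourhoods $U_z$ of the discrete set of values $z_{\tau^\frown m}$ produced by Lemma \ref{star_two}, one removes an $\mathcal{I}$-small set of successors so that $f[[S|_{\tau^\frown m}]]\subseteq U_{z_{\tau^\frown m}}$ for all remaining $m$; injectivity of $f$ on the resulting tree then follows because distinct branches separate at some terminal node of some $F_n$ and their images lie in disjoint clopen sets. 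Without the rank analysis telling you \emph{where} the values $z_{\tau^\frown n}$ genuinely differ from $z_\tau$, and without using the convergence condition to absorb whole subtree images into the separating clopen sets, the fusion you describe cannot be carried out.
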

\begin{proof}
Assume that $T\in\LL_{\mathcal{I}}$ and $ f:[T]\rightarrow 2^{\omega}$ is continuous. Using a standard fusion argument and Lemma \ref{star_one}, we can get a tree $S\leq_{0}T$ and a collection $\{z_{\sigma}:\sigma\in split(S)\}\subseteq 2^{\omega}$ such that, for each $\sigma\in split(S)$ we have 
 \begin{equation} \label{lem9p1}
         \forall N\in\omega\text{  }    
         \{i\in succ_{S}(\sigma): f[[S|_{\sigma^{\frown}i}]]\nsubseteq [z_{\sigma}|_{N}]\}\in\Fin.
 \end{equation}
This in particular means that the sequence $\{z_{\sigma^{\frown}n}:n\in\omega\}$ converges to $z_{\sigma}$. To decide whether the restriction we are looking for is constant or 1-1 we will use a rank argument. Define $rank:split(S)\rightarrow \omega_{1}$ as:

$$ rank(\sigma) =
\begin{cases}
    0 & \text{ if } \{n\in\omega: z_{\sigma^{\frown}n}\neq z_{\sigma}\}\in\mathcal{I}^{+}, \\
    \alpha & \text{ if } \alpha \text{ is the smallest such that }\{n\in\omega: rank(\sigma^{\frown}n)<\alpha \}\in\mathcal{I}^{+}.
\end{cases}
$$
    
Now we consider two cases:

Case 1: For some $\sigma\in split(S)$ the rank is not defined. This implies that $\{n\in\omega: rank(\sigma^{\frown}n)$ is defined $\}\in \mathcal{I}$. Using this, together with a standard fusion argument, we trim $S$ to get a $R\leq_{0} S_\sigma$ such that for all $\tau\in split(R)$ we have $z_{\tau}=z_{\sigma}$. We conclude that the restriction $f|_{[R]}$ is constant with value $z_{\sigma}$.

Case 2: The rank is defined for all $\sigma\in split(S)$. We will inductively construct an increasing sequence $\{F_{n}:n\in\omega\}$ of well-founded subtrees of $S$ and disjoint clopen sets $\{V_{\sigma|_{|\sigma|-1}}:\sigma\in term(F_{n})\}$ such that for all $\sigma\in\omega^{<\omega}$ and $n\in\omega$ we have:
    \begin{enumerate}[label=\Alph*.]
        \item if $\sigma\in F_{n}$ is not a terminal node of $F_{n}$ then $succ_{F_{n}}(\sigma)\in\mathcal{I}^+$,
        \item if $\sigma\in F_{n+1}\setminus F_{n}$ then $\sigma$ extends a terminal node of $F_{n}$,
        \item for each $\sigma\in term(F_{n})$ we have $f[[S_{\sigma}]]\subseteq V_{\sigma|_{|\sigma|-1}}$.
    \end{enumerate}
 Assume for a moment that such a sequence has been constructed and let $R=\bigcup_{n}F_{n}$. Clearly $R\leq S$, and the condition A implies $R\in\LL_{\mathcal{I}}$. We claim that the restriction $f|_{[R]}$ is 1-1. Let $x,y\in[R]$, and $x\neq y$. Then there are $\sigma,\tau\in\omega^{<\omega}$, $\sigma\neq\tau$ and $n\in\omega$ such that $\sigma\subseteq x, \tau\subseteq y$ and $\sigma,\tau$ are terminal nodes of $F_{n}$. The condition C guarantees that $f(x)\in V_{\sigma|_{|\sigma|-1}}$ and $f(y)\in V_{\tau|_{|\tau|-1}}$ so $f(x)\neq f(y)$.
 
To finish the proof, we will show how to carry out the inductive construction: Let $F_{0}=\{stem(S)\}$ and assume that $F_{n}$ has been built. Fix a terminal node $\sigma$ of $F_{n}$. As the rank of $\sigma$ is defined, there is a well-founded tree $G_{\sigma}\subseteq\{\tau\in\omega^{<\omega}:\tau\subseteq\sigma$ or $\sigma\subseteq\tau\}$ such that all terminal nodes of $G_\sigma$ have rank $0$ and all non-terminal nodes of $G_\sigma$ splits into $\mathcal{I}$-positive sets. We then apply Lemma \ref{star_two} to the family $\{A_{\tau}:\tau\in term(G_{\sigma})\}\subseteq\mathcal{I}^{+}$, $A_{\tau}=\{n\in\omega: z_{\tau^{\frown}n}\neq z_{\tau}\}$, and the functions $f_{\tau}:A_{\tau}\rightarrow 2^{\omega}$ defined as $f_{\tau}(n)=z_{\tau^{\frown}n}$. We obtain $B_{\tau}\subseteq A_{\tau}$, $B_{\tau}\in\mathcal{I}^{+}$ such that $D=\bigcup\{f_{\tau}[B_{\tau}]:\tau\in term(G_{\sigma})\}$ is a discrete subset of $2^{\omega}$ and for $\tau \neq \tau'$, we have that $f_{\tau}[B_{\tau}] \cap f_{\tau'}[B_{\tau'}] =\emptyset$. Let then $\{U_{z}:z\in D\}$ be a collection of disjoint clopen sets such that $z\in U_{z}$ for each $z\in D$. By the condition (\ref{lem9p1}), we can remove $\mathcal{I}$-many points from each $B_{\tau}$ to obtain $\mathcal{I}$-positive sets $C_{\tau}\subseteq B_{\tau}$ such that for every $m\in C_{\tau}$ we have $f[[S|_{\tau^{\frown}m}]]\subseteq U_{z_{\tau^{\frown}m}}$. Let $H_{\sigma}$ be the tree generated by the set $\{\tau^{\frown}m:\tau\in term(G_{\sigma})$ and $m\in C_{\tau}\}$. Finally, we unfix $\sigma$ and let $F_{n+1}$ be the union of all $H_{\sigma}$'s where $\sigma$ is a terminal node of $F_{n}$. This finishes the construction.
\end{proof}

As a corollary we get the following result.

\begin{corollary}
    If $\mathcal{I}$ is a $P^+$-ideal that is not near maximal, then $\mathrm{cof}(\ell^{0}(\mathcal{I}))>\continuum$.
\end{corollary}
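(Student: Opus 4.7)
The plan is to invoke Theorem \ref{BreKhoWoh} via its first alternative, using Theorem \ref{pplusconstant1} to supply the required property.

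Concretely, I need to verify that $\LL_{\ideal{I}}$ is a combinatorial tree forcing. Condition (1), the arboreal property, is immediate from the definition of $\LL_{\ideal{I}}$. For condition (2), the introduction already records that it holds whenever $\ideal{I}$ is nowhere maximal, so it suffices to observe that a $P^{+}$-ideal which is not near maximal is automatically nowhere maximal. If some restriction $\ideal{I}|_{X}$ were maximal on an $\ideal{I}$-positive set $X$, there are two cases. If $\omega \setminus X \in \ideal{I}$, then $\ideal{I}$ itself is maximal (any $A \subseteq \omega$ splits into $A \cap X \in \ideal{I}|_{X}$ or $X \setminus A \in \ideal{I}|_{X}$, and $A \cap (\omega \setminus X)$ is negligible), hence trivially near maximal via $h = \mathrm{id}$. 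If $\omega \setminus X \notin \ideal{I}$, the $P^{+}$-property can be used to build a finite-to-one map $\phi: \omega \setminus X \to X$ Kat\v{e}tov-reducing $\ideal{I}|_{X}$ into $\ideal{I}$, and then $h = \mathrm{id}_{X} \cup \phi$ is a finite-to-one witness to $\ideal{I}$ being near maximal. Either way, the hypothesis is contradicted.

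Having established that $\LL_{\ideal{I}}$ is a combinatorial tree forcing, Theorem \ref{pplusconstant1} delivers the constant or 1-1 property for $\LL_{\ideal{I}}$, and the first clause of Theorem \ref{BreKhoWoh} then yields $\cof{\ell^{0}(\ideal{I})} > \continuum$.

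The main potential obstacle in this plan is the construction of the witness $\phi$ in the second sub-case of the verification; apart from that, the proof is essentially a two-line combination of Theorems \ref{pplusconstant1} and \ref{BreKhoWoh}, so the corollary deserves its status as an immediate consequence of the section's main theorem.
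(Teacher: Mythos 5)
Your top-level plan is exactly the paper's: the corollary is obtained by feeding Theorem \ref{pplusconstant1} into the first alternative of Theorem \ref{BreKhoWoh}, and your Case 1 (where $\omega\setminus X\in\ideal{I}$) is fine. The gap is the auxiliary claim you use to verify condition (2) of the definition of combinatorial tree forcing, namely that a $P^+$-ideal which is not near maximal is automatically nowhere maximal. This is not provable in ZFC, and it breaks precisely at the step you flagged. Consistently (e.g.\ under CH) there is a P-point $\mathcal{U}$ on the set $E$ of even numbers; let $O=\omega\setminus E$ and let $\ideal{I}$ consist of all $A\subseteq\omega$ with $A\cap E\notin\mathcal{U}$ and $A\cap O$ finite. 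This $\ideal{I}$ is a $P^+$-ideal: for an almost decreasing sequence of positive sets, either every member has infinite trace on $O$ and one diagonalizes inside $O$, or from some point on the traces on $E$ lie in $\mathcal{U}$ and the P-point property of $\mathcal{U}$ yields a positive pseudointersection. Its restriction $\ideal{I}|_E=\mathcal{U}^*$ is maximal and $O\notin\ideal{I}$, so we are in your second sub-case. Nevertheless $\ideal{I}$ is not near maximal: for any finite-to-one $h:\omega\to\omega$, a set $B$ belongs to $h(\ideal{I}^*)$ if and only if $h^{-1}[B]\cap E\in\mathcal{U}$ and $h[O]\subseteq^* B$, so $h(\ideal{I}^*)$ is the intersection of an ultrafilter with the proper, non-maximal filter $\{B: h[O]\subseteq^* B\}$ (non-maximal because $h[O]$ is infinite); since an ultrafilter is a maximal proper filter, an intersection of two proper filters can be ultra only if both are, so $h(\ideal{I}^*)$ is never an ultrafilter. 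In particular no finite-to-one $\phi:\omega\setminus X\to X$ of the kind your Case 2 requires can exist, and the contradiction you aim for is unreachable.

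What is true (and is essentially the remark preceding Theorem \ref{pplusconstant1}) is that if a $P^+$-ideal has a maximal restriction to a positive set, then the dual of that restriction is a P-point; so your implication does hold in models without P-points, but not outright. Worse for the intended application, in the example above condition (2) genuinely fails: below the $\ideal{I}$-Laver tree all of whose splitting sets equal $E$, the forcing $\LL_{\ideal{I}}$ is Laver forcing relative to the ultrafilter $\mathcal{U}$, which is $\sigma$-centered, so that condition has no $\continuum$ many pairwise incompatible extensions and no map $f$ as in condition (2) can exist for it. Hence Theorem \ref{BreKhoWoh} cannot be invoked, along the lines you propose, for an arbitrary $P^+$-ideal that is not near maximal; the paper states the corollary without comment on this point, and closing this case would require either an additional hypothesis (for instance definability of $\ideal{I}$, which does give nowhere maximality) or an argument that bypasses condition (2).
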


We do not know if this characterizes having the constant or 1-1 property for $\mathcal{I}$-Laver trees. In particular we do not know the answer of the following question.

\begin{question}
    Does $\LL_{\mathcal{Z}}$ or $\LL_{\FF}$ have the constant or 1-1 property?
\end{question}

\section{Incompatibility Shrinking Number}

Suppose that $\TT$ is a combinatorial tree forcing. The incompatibility shrinking number $\mathrm{is}(\TT)$ is the minimal amount of sets from $t^{0}_{\mathcal{B}}$ which intersects every $[T]$ for $T\in\TT$. Motivated by Theorem \ref{BreKhoWoh}, in this section we will estimate $\mathrm{is}(\TT)$ for the $\mathcal{I}$-Laver and $\mathcal{I}$-Miller trees in order to see in which cases the cofinality of the respective tree ideal is strictly greater than $\mathfrak{c}$. We mentioned the following simple estimates for the incompatibility shrinking number.

\begin{proposition}\label{ISBasicIneq}
$\mathrm{add}(t^{0}_{\mathcal{B}})\leq \mathrm{is}(\TT)\leq \mathrm{cov}(t^{0}_{\mathcal{B}})$. \label{addiscov}
\end{proposition}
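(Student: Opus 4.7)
The plan is to unfold both inequalities directly from the definitions, noting that the only substantive observation is that any set meeting every body of a $\TT$-tree must lie outside $t^{0}$.

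For the upper bound $\mathrm{is}(\TT)\leq \mathrm{cov}(t^{0}_{\mathcal{B}})$, I would take a covering family $\mathcal{F}\subseteq t^{0}_{\mathcal{B}}$ with $\bigcup\mathcal{F}=\omega^{\omega}$ of minimal size $\mathrm{cov}(t^{0}_{\mathcal{B}})$. For any $T\in\TT$ the body $[T]$ is nonempty, so $[T]\cap\bigcup\mathcal{F}\neq\emptyset$, which means some member of $\mathcal{F}$ meets $[T]$. Hence $\mathcal{F}$ already witnesses the incompatibility shrinking number, giving the inequality.

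For the lower bound $\mathrm{add}(t^{0}_{\mathcal{B}})\leq \mathrm{is}(\TT)$, let $\mathcal{F}\subseteq t^{0}_{\mathcal{B}}$ be a witness for $\mathrm{is}(\TT)$, so $|\mathcal{F}|=\mathrm{is}(\TT)$ and for every $T\in\TT$ there is $X\in\mathcal{F}$ with $[T]\cap X\neq\emptyset$. I claim $\bigcup\mathcal{F}\notin t^{0}_{\mathcal{B}}$; granted this, $\mathcal{F}$ itself witnesses that the additivity of $t^{0}_{\mathcal{B}}$ is at most $|\mathcal{F}|$. To prove the claim, it suffices to show $\bigcup\mathcal{F}\notin t^{0}$, since $t^{0}_{\mathcal{B}}\subseteq t^{0}$. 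Suppose towards a contradiction that $\bigcup\mathcal{F}\in t^{0}$. Pick any $T\in\TT$; then by definition of $t^{0}$ there is $S\leq T$, $S\in\TT$, with $[S]\cap\bigcup\mathcal{F}=\emptyset$. But $S\in\TT$, so by hypothesis some $X\in\mathcal{F}$ satisfies $[S]\cap X\neq\emptyset$, contradicting $[S]\cap\bigcup\mathcal{F}=\emptyset$.

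Neither direction involves any real obstacle; the whole content is the observation in the previous paragraph that a set which meets the body of every $\TT$-tree cannot belong to $t^{0}$, which is immediate from the definition of $t^{0}$ together with the arboreal property. The proof is short enough that I would present it in a single paragraph in the paper.
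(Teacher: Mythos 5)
Your proof is correct, and it is exactly the intended argument: the paper states this proposition without proof as a ``simple estimate,'' and both inequalities are the direct unfoldings of the definitions that you give. The only step worth making explicit is your appeal to $t^{0}_{\mathcal{B}}\subseteq t^{0}$: since $t^{0}_{\mathcal{B}}$ is the $\sigma$-ideal generated by $t^{0}\cap\Borel$, this inclusion amounts to saying that countable unions of Borel sets in $t^{0}$ remain in $t^{0}$, i.e., it is where the $\sigma$-ideal property of $t^{0}$ (Proposition \ref{aresigmaideals}, valid for all tree types considered in the paper) is silently used; for an arbitrary combinatorial tree forcing whose $t^{0}$ is not a $\sigma$-ideal, that step---and indeed the inequality $\mathrm{add}(t^{0}_{\mathcal{B}})\leq\mathrm{is}(\TT)$ itself---could fail, so it is worth a clause acknowledging this hypothesis.
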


This means that $\mathrm{add}(t^{0}_{\mathcal{B}})=\continuum$ implies $\mathrm{cof}(t^{0})>\continuum$. For $\FM$ the additivity is equal to $\omega_{1}$ (see \cite{RosNew}), so is for several other tree types as we will see in the last section of this manuscript (see Corollary \ref{AddEqualOmegaOne}). We will focus on improving these estimates. For this purpose, we will frequently use the following dichotomies. The first one is from Rosłanowski and Newelski.

\begin{theorem}[\cite{RosNew}]\label{RosNewDich}
    For every analytic $A\subseteq\omega^{\omega}$ either there is $T\in \FM$ such that $[T]\subseteq A$ or for some $\phi:\omega^{<\omega}\rightarrow\omega$, $A$ is contained in a set of the form 
    \begin{center}
        $F_{\phi}:=\{x\in \omega^{\omega}: \forall^{\infty} n\in\omega$ $x(n)\neq\phi(x|_{n})\}$.
    \end{center}
\end{theorem}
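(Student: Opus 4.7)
The approach I would take is a derivative/rank argument on the tree representation of $A$, in the spirit of the Cantor--Bendixson proof of the perfect set property for analytic sets. Since $A$ is analytic, write $A=\pi_1[[T]]$ for some tree $T\subseteq(\omega\times\omega)^{<\omega}$, where $\pi_1$ denotes projection onto the first coordinate. On subtrees $T'\subseteq T$ I would define the monotone derivative
\[
  D(T') = \{(s,t)\in T' : \exists\, (s',t')\supseteq(s,t)\text{ in }T',\ \forall i\in\omega\ \exists j\in\omega,\ (s'{}^{\frown}i,\,t'{}^{\frown}j)\in T'\},
\]
capturing that $(s,t)$ has an extension inside $T'$ with full splitting in the first coordinate. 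Iterate transfinitely: $T_0=T$, $T_{\alpha+1}=D(T_\alpha)$, and $T_\lambda=\bigcap_{\alpha<\lambda}T_\alpha$ at limits. By the standard rank argument this sequence stabilizes at some countable ordinal $\alpha_0$ to a kernel $T_\infty=T_{\alpha_0}$, and each $T_\alpha$ is easily seen to be closed under initial segments.

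If $T_\infty\neq\emptyset$ then, since $T_\infty=D(T_\infty)$, one constructs a subtree $S^*\subseteq T_\infty$ recursively: at each stage, invoke the derivative property to extend the current node $(s,t)\in T_\infty$ to some $(s',t')\in T_\infty$ whose children $(s'{}^{\frown}i,\,t'{}^{\frown}j_i)$ all lie in $T_\infty$, and record $s'$ as the next full-splitting node. The first-coordinate projection $S=\pi_1[S^*]$ is then a full-splitting Miller tree in $\FM$, and since every branch of $S^*$ lies in $[T]$, its first coordinate lies in $A$, yielding $[S]\subseteq A$.

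If $T_\infty=\emptyset$ then every $(s,t)\in T$ has a rank $\rho(s,t)<\alpha_0$ defined by $(s,t)\in T_{\rho(s,t)}\setminus T_{\rho(s,t)+1}$. Initial-segment closure of each $T_\alpha$ forces $\rho$ to be non-increasing along $\subseteq$-chains in $T$, so along any branch $(x,y)\in[T]$ the ranks $\rho(x|_n,y|_n)$ stabilize to some ordinal $\beta$. The failure of $D$ at the stable level supplies a witness $i_n\in\omega$ with $(x|_n{}^{\frown}i_n,\,y|_n{}^{\frown}j)\notin T_\beta$ for all $j$, which forces $x(n)\neq i_n$ for all sufficiently large $n$. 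The predictor $\phi:\bairetree\to\omega$ would then be built by choosing, for each $\sigma\in\bairetree$, a maximal-rank witness $t_\sigma$ among $\{t:(\sigma,t)\in T\}$ and setting $\phi(\sigma)$ to be an obstruction for $(\sigma,t_\sigma)$, so that $A\subseteq F_\phi$.

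The main technical obstacle is this last step in the case $T_\infty=\emptyset$: although the derivative naturally furnishes, for each $(\sigma,t)\in T$, an obstruction $i(\sigma,t)\in\omega$, this value depends on the witness $t$, whereas $\phi$ is required to depend only on the first coordinate. Overcoming this requires a careful uniformization of witnesses -- fixing one witness per $\sigma$ and verifying that the rank monotonicity along branches is strong enough that the uniformized $\phi$ still catches every real in $A$ eventually. This is where the analytic structure of $A$ is used essentially, and is the technical heart of the proof.
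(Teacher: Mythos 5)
First, note that the paper does not prove this statement at all: it is quoted from Newelski--Rosłanowski \cite{RosNew}, so there is no internal proof to compare with. Judged on its own terms, your kernel case is essentially fine: the derivative $D$ is monotone and preserves closure under initial segments, the iteration stabilizes, and if $T_\infty\neq\emptyset$ the recursive construction with one designated lift $(s'{}^{\frown}i,t'{}^{\frown}j_i)$ per child does produce $S\in\FM$ with $[S]\subseteq A$ (the designated lifts are exactly what guarantees that every branch of the projected tree lifts to a branch of $T$).

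The case $T_\infty=\emptyset$, however, contains a genuine gap, and it is precisely the step you defer to ``careful uniformization.'' Along a branch $(x,y)\in[T]$ with stabilized rank $\beta$, the failure of membership in $D(T_\beta)$ gives an obstruction $i_n$ with $(x|_n{}^{\frown}i_n,y|_n{}^{\frown}j)\notin T_\beta$ for all $j$; this $i_n$ depends on the pair $(y|_n,\beta)$, not on $x|_n$. Your proposed repair --- fix one witness $t_\sigma$ of maximal rank over $\{t:(\sigma,t)\in T\}$ and let $\phi(\sigma)$ be an obstruction for $(\sigma,t_\sigma)$ --- does not close this gap. First, a maximal-rank witness need not exist (the supremum of countably many ranks need not be attained). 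Second, and more seriously, the obstruction for $(\sigma,t_\sigma)$ only asserts that no child of $(\sigma,t_\sigma)$ in direction $\phi(\sigma)$ lies in $T_{\rho(\sigma,t_\sigma)}$; for an actual branch $(x,y)$ through $\sigma$ with $y|_{|\sigma|}\neq t_\sigma$ and stabilized rank $\beta\leq\rho(\sigma,t_\sigma)$, the pair $(x|_{n+1},y|_{n+1})$ sits in the \emph{larger} tree $T_\beta\supseteq T_{\rho(\sigma,t_\sigma)}$ and has a different second coordinate, so nothing prevents $x(n)=\phi(x|_n)$ infinitely often. The difficulty is structural rather than technical: unlike the Miller/Laver dichotomies (Theorems \ref{SabokZaplDich} and \ref{MillerDich}), where the small side allows an ideal-valued $\phi$ and countably many candidate obstructions per node can be amalgamated, here $\phi(\sigma)$ is a single integer, so the countably many witness-dependent obstructions produced by your rank analysis cannot simply be merged. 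This is exactly why the cited proof of Newelski and Rosłanowski runs through the machinery of the unsymmetric game (an unfolding/fusion argument designed around this uniformization problem) rather than a plain Cantor--Bendixson derivative, and as it stands your argument establishes the dichotomy only for closed $A$, where no witness coordinate is present.
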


A similar dichotomy appears in a paper of Sabok and Zapletal.

\begin{theorem}[\cite{SabZapl}]\label{SabokZaplDich}
    For every analytic $A\subseteq\omega^{\omega}$ either there is $T\in \MM_{\mathcal{I}}$ such that $[T]\subseteq A$ or for some $\phi:\omega^{<\omega}\rightarrow \mathcal{I}$, $A$ is contained in a set of the form 
    \begin{center}
        $M_{\phi}:=\{x\in \omega^{\omega}: \forall^{\infty} n\in\omega$ $x(n)\in\phi(x|_{n})\}$.
    \end{center}
\end{theorem}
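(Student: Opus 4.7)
The approach is a transfinite derivative on a Souslin representation of $A$, along the lines of \cite{SabZapl}. The mutual exclusion of the two alternatives is the warm-up: given $T \in \MM_{\mathcal{I}}$ and $\phi : \omega^{<\omega} \to \mathcal{I}$, I build $x \in [T] \setminus M_\phi$ inductively by repeatedly invoking the $\mathcal{I}$-Miller property to find an extension $\tau$ of the current stem with $succ_T(\tau) \notin \mathcal{I}$ and then, using $\phi(\tau) \in \mathcal{I}$, picking some $n \in succ_T(\tau) \setminus \phi(\tau)$. The resulting branch has $x(k) \notin \phi(x|_k)$ for infinitely many $k$, so $x \notin M_\phi$.

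For the main implication, I would fix a tree $S \subseteq (\omega \times \omega)^{<\omega}$ with $A = p[S]$ and iterate the derivative
$$D(R) = \{s \in R : \exists t \supseteq s \text{ in } R,\ \{n : \exists m,\ t^{\frown}(n,m) \in R\} \notin \mathcal{I}\}$$
to produce a decreasing sequence $S_0 = S \supseteq S_1 \supseteq \cdots$ that stabilizes at some countable stage at a fixed point $S_\infty$. If $S_\infty \neq \emptyset$, every $s \in S_\infty$ extends to a node with $\mathcal{I}$-positive first-coordinate splitting inside $S_\infty$; a fusion then produces coherent witnesses $(\sigma, \tau_\sigma) \in S_\infty$ over a tree $T \in \MM_{\mathcal{I}}$, with each branch $x \in [T]$ pairing with $y = \bigcup_n \tau_{x|_n}$ to give $(x,y) \in [S]$, and hence $[T] \subseteq A$.

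If $S_\infty = \emptyset$, every $s = (\sigma, \tau) \in S$ carries a rank $\mathrm{rk}(s) < \omega_1$, and one checks that $\mathrm{rk}$ is non-increasing along extensions in $S$. For each such $s$, the fact $s \notin S_{\mathrm{rk}(s)+1}$ forces the set $\{n : \exists m,\ (\sigma^{\frown}n, \tau^{\frown}m) \in S_{\mathrm{rk}(s)}\}$ to lie in $\mathcal{I}$, and along any branch $(x,y) \in [S]$ the ranks $\mathrm{rk}(x|_n, y|_n)$ eventually stabilize at some $\beta$, after which $x(n)$ is trapped in precisely such an $\mathcal{I}$-slice. The main technical obstacle is distilling this two-coordinate information into a single $\phi : \omega^{<\omega} \to \mathcal{I}$ that works uniformly over all branches, since the naive union over all witness $\tau$'s for each $\sigma$ breaks $\mathcal{I}$-smallness. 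The resolution is to make a canonical, rank-sensitive selection $\tau_\sigma$ for each $\sigma$ (say, one minimizing rank among $\tau$ with $(\sigma, \tau) \in S$) and to argue, via the rank-stabilization along each branch, that the induced $\phi$ simultaneously captures every branch of $A$; this canonical-choice-plus-rank-tracking bookkeeping is the technical heart of the argument.
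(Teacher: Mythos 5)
The first half of your argument is sound: the derivative $D$ is monotone on subtrees of $S$, the fixed-point analysis is correct, and when $S_\infty\neq\emptyset$ the fusion you describe does produce a tree $T\in\MM_{\mathcal{I}}$ together with coherent witnesses, giving $[T]\subseteq p[S]=A$; the warm-up showing no $[T]$ is contained in any $M_\phi$ is also correct (though not needed for the disjunction as stated). The genuine gap is exactly the step you defer at the end, and the fix you sketch does not work. When $S_\infty=\emptyset$, the rank analysis gives, for a branch $(x,y)\in[S]$ whose ranks stabilize at $\beta$, that for large $n$ the value $x(n)$ lies in the $\mathcal{I}$-set $B_{(x|_n,y|_n),\beta}=\{k:\exists m\ (x|_n{}^{\frown}k,\ y|_n{}^{\frown}m)\in S_\beta\}$; but $\phi$ is only allowed to depend on $x|_n$. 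Your proposed repair --- fix a canonical $\tau_\sigma$ minimizing $\mathrm{rk}(\sigma,\tau)$ and set $\phi(\sigma)=B_{(\sigma,\tau_\sigma),\mathrm{rk}(\sigma,\tau_\sigma)}$ --- fails: for a given $x\in A$ the actual witness $y$ need not pass through, or even cohere with, the selected nodes $\tau_{x|_n}$, and the fact that $(x|_{n+1},y|_{n+1})\in S_\beta\subseteq S_{\mathrm{rk}(x|_n,\tau_{x|_n})}$ says nothing about extensions whose second coordinate extends $\tau_{x|_n}$; so there is no reason that $x(n)\in\phi(x|_n)$ for even a single large $n$, let alone almost all. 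Replacing the single selection by the union of the slices over all candidate $\tau$ (or all $\tau$ of minimal rank) is an infinite union of members of $\mathcal{I}$ and, as you yourself note, destroys $\mathcal{I}$-smallness. This projection problem is not bookkeeping: it is the known obstruction to proving such analytic dichotomies by a bare derivative on a Souslin representation, and it already bites in the classical case $\mathcal{I}=\Fin$, where the statement is Kechris' dichotomy between containing the branches of a superperfect tree and being covered by a $\sigma$-compact set, whose proofs go through unfolding/Hurewicz-type arguments, games, or forcing-style fusion rather than a rank computation on $S$ alone.

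Note also that the paper does not prove this theorem; it is quoted from Sabok and Zapletal \cite{SabZapl}, whose argument relies on the machinery of $\sigma$-ideals generated by closed sets (properness and fusion for the quotient forcing), which is precisely what replaces the missing step above. As written, your proposal establishes the first alternative correctly but leaves the second alternative --- the heart of the theorem --- unproven.
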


This implies that the Borel part $m^{0}_{\mathcal{B}}(\mathcal{I})$ is the $\sigma$-ideal generated by the sets $M_{\phi}$ for $\phi:\omega^{<\omega}\rightarrow \mathcal{I}$ and the forcing notion $\MM_{\mathcal{I}}$ is forcing equivalent $\Borel\setminus m^{0}_{\mathcal{B}}(\mathcal{I})$. A well known example is when $\mathcal{I}=\Fin$. In such case $\MM_{\mathcal{I}}$ is the Miller forcing and the dichotomy was already proven by Kechris in \cite{kechris}. Here, the Borel part consists of $\sigma$-compact sets and it is well known that $\mathrm{add}(m^{0}_{\mathcal{B}})=\mathrm{non}(m^{0}_{\mathcal{B}})=\bbb$ and $\mathrm{cov}(m^{0}_{\mathcal{B}})=\mathrm{cof}(m^{0}_{\mathcal{B}})=\ddd$.
A similar dichotomy for Laver trees was proven by Miller.

\begin{theorem}[\cite{Miller}]\label{MillerDich}
    For every analytic $A\subseteq\omega^{\omega}$ either there is $T\in\LL_{\mathcal{I}}$ such that $[T]\subseteq A$ or for some  $\phi:\omega^{<\omega}\rightarrow \mathcal{I}$, $A$ is contained in a set of form 
    \begin{center}
        $L_{\phi}:=\{x\in \omega^{\omega}: \exists^{\infty} n\in\omega$ $x(n)\in\phi(x|_{n})\}$.
    \end{center}
\end{theorem}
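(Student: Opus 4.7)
The plan is to establish the dichotomy for closed $A$ via a transfinite rank analysis, and then bootstrap to analytic $A$ using the standard unfolding trick.

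Suppose first $A$ is closed with tree $T_A \subseteq \omega^{<\omega}$. Define the decreasing iteration
\[
X_0 = T_A, \quad X_{\alpha+1} = \{\sigma \in X_\alpha : \{n \in \omega : \sigma^\frown n \in X_\alpha\} \in \mathcal{I}^+\}, \quad X_\lambda = \bigcap_{\alpha < \lambda} X_\alpha,
\]
which stabilizes at some countable ordinal at a set $G$ (since $\omega^{<\omega}$ is countable). A direct greedy construction shows that $\sigma \in G$ if and only if there exists $T \in \LL_{\mathcal{I}}$ with $\mathrm{stem}(T) = \sigma$ and $[T] \subseteq A$: given $\sigma \in G$, the tree $T = \{\tau : \tau \subseteq \sigma\} \cup \{\tau \in G : \sigma \subseteq \tau\}$ is an $\mathcal{I}$-Laver tree with stem $\sigma$ and body in $A$. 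So if the first alternative of the dichotomy fails, $G = \emptyset$, and every $\sigma \in T_A$ has a well-defined rank $\rho(\sigma)$ equal to the least $\alpha$ with $\sigma \notin X_{\alpha+1}$.

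Define $\phi(\sigma) = \{ n \in \omega : \sigma^\frown n \in X_{\rho(\sigma)}\}$ for $\sigma \in T_A$ (which lies in $\mathcal{I}$ by the definition of $\rho$) and $\phi(\sigma) = \emptyset$ otherwise. The key observation is that whenever $x \in A$ and $x(n) \notin \phi(x \restriction n)$, one has $x \restriction (n+1) \notin X_{\rho(x \restriction n)}$ and hence $\rho(x \restriction (n+1)) < \rho(x \restriction n)$. Since ordinals are well-founded, this can only happen finitely often; hence $x(n) \in \phi(x \restriction n)$ for all but finitely many $n$, so in particular $x \in L_\phi$.

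For analytic $A = \pi[F]$ with $F \subseteq \omega^\omega \times \omega^\omega$ closed, I would run the analogous iteration on the pair-tree $T_F$ with splitting condition at $(\sigma,\tau)$ being $\{n \in \omega : \exists m \in \omega,\ (\sigma^\frown n,\ \tau^\frown m) \in X_\alpha\} \in \mathcal{I}^+$. If the root is stable, a simultaneous fusion builds the required $T \in \LL_{\mathcal{I}}$ in the first coordinate together with a monotone witness assignment into $F$, giving $[T] \subseteq A$. The main obstacle is the analytic case: converting pair-ranks $\rho(\sigma,\tau)$ into a $\phi$ depending only on $\sigma$, since a naive union over the length-$|\sigma|$ witnesses $\tau$ produces a countable union of $\mathcal{I}$-sets and may leave $\mathcal{I}$. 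This is handled in the standard way by refining the rank so that only finitely many $\tau$ remain active at each $\sigma$ at any stage, at which point the union is finite and stays in $\mathcal{I}$.
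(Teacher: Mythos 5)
Your closed-set argument is correct and complete: the derivative $X_\alpha$, the extraction of an $\mathcal{I}$-Laver tree from the stable part, and the rank-drop argument giving $x(n)\in\phi(x\restriction n)$ for all but finitely many $n$ are all fine (this is the standard proof, and it even yields the stronger $\forall^\infty$ conclusion for closed $A$). Note, incidentally, that the paper does not prove this statement at all — it is quoted from Miller — so the comparison here is purely on the merits of your argument.

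The genuine gap is the analytic case, which is the actual content of the theorem. Your last paragraph correctly identifies the obstruction — the rank lives on pairs $(\sigma,\tau)$, while $\phi(\sigma)$ must lie in $\mathcal{I}$ and depend on $\sigma$ alone, and the union of the $\mathcal{I}$-sets over all witness stubs $\tau$ need not be in $\mathcal{I}$ — but the proposed fix ("refine the rank so that only finitely many $\tau$ remain active at each $\sigma$") is asserted, not proved, and it is not clear it can work as described. The set of potential witness stubs compatible with a given $\sigma$ is infinite (witness entries are unbounded), and for any assignment of finitely many ``active'' stubs to each $\sigma$ fixed in advance (by length-lag, enumeration, or any bookkeeping depending only on $\sigma$), a pair $(x,y)\in F$ whose witness $y$ grows faster than the bookkeeping will have $y\restriction k$ essentially never among the active stubs at $x\restriction n$, so the rank-drop argument produces no hits inside $\phi(x\restriction n)$ and the inclusion $A\subseteq L_\phi$ is not obtained. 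This difficulty is precisely why the conclusion weakens from $\forall^\infty$ (closed case) to $\exists^\infty$ (analytic case), and why the known proofs — Miller's argument for $\mathcal{I}$-Laver trees, and the Goldstern--Repick\'y--Shelah--Spinas argument for classical Laver trees (``analytic strongly dominating sets contain Laver trees'') — use a genuinely different mechanism, e.g.\ an unfolded closed game or a simultaneous construction in which the witness coordinate is advanced interactively rather than controlled by a rank computed beforehand. Equivalently, what you are implicitly claiming is that every analytic set not covered by any $L_\phi$ contains a closed set not covered by any $L_\phi$; this capacitability-type statement is equivalent to the theorem itself and cannot be dismissed as a standard refinement. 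As it stands, the proposal proves the dichotomy only for closed sets.
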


By the same line of reasoning, the Borel part $\ell^{0}_{\mathcal{B}}(\mathcal{I})$ is the $\sigma$-ideal generated by the sets $L_{\phi}$ for $\phi:\omega^{<\omega}\rightarrow \mathcal{I}$ and the forcing notion $\LL_{\mathcal{I}}$ is forcing equivalent $\Borel\setminus \ell^{0}_{\mathcal{B}}(\mathcal{I})$. Similarly, if $\mathcal{I}=\Fin$ then $\LL_{\mathcal{I}}$ is the Laver forcing and the Borel part $\ell^{0}_{\mathcal{B}}(\Fin)$ is the $\sigma$-ideal of not strongly dominating subsets of $\omega^{\omega}$, investigated in \cite{DecoRep}. In this paper, the authors prove that $\mathrm{add}(\ell^{0}_{\mathcal{B}}(\mathcal{I}))=\mathrm{cov}(\ell^{0}_{\mathcal{B}}(\mathcal{I}))=\bbb$ and $\ddd=\mathrm{non}(\ell^{0}_{\mathcal{B}}(\mathcal{I}))=\mathrm{cof}(\ell^{0}_{\mathcal{B}}(\mathcal{I}))$.
Before we prove the first theorem of this section, recall that, for an ideal $\mathcal{I}$ on $\omega$ ,the \emph{additivity of} $\mathcal{I}$ is defined as
$$
\mathrm{add}^*(\mathcal{I}) = min\{ |\mathcal{A}| : \mathcal{A} \subseteq \mathcal{I} \text{ and } \forall X \in \mathcal{I} \exists A \in \mathcal{A} (A \nsubseteq^* X) \}.
$$
This cardinal invariant has been extensively studied. The reader interested can consult \cite{Hrus}. We will also consider a slightly modified version of this cardinal invariant:
$$
\mathrm{add}_{\omega}^*(\mathcal{I}) = min\{ |\mathcal{A}| : \mathcal{A} \subseteq \mathcal{I} \text{ and } \forall \{X_{n}:n\in\omega\} \subseteq \mathcal{I} \exists A \in \mathcal{A} \forall n\in\omega (A \nsubseteq^* X_{n}) \}.
$$
Clearly $\mathrm{add}^*(\mathcal{I}) \leq \mathrm{add}_\omega^*(\mathcal{I})$. First, we notice that these two cardinals are the same whenever $\mathcal{I}$ is a $P$-ideal.

\begin{proposition}\label{AddOmegaEqualAdd}
If $\mathcal{I}$ is a P-ideal then $\mathrm{add}_{\omega}^*(\mathcal{I})=\mathrm{add}^*(\mathcal{I})$.
\end{proposition}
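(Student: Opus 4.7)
The plan is to show the non-trivial inequality $\mathrm{add}^*_\omega(\mathcal{I}) \leq \mathrm{add}^*(\mathcal{I})$, since the reverse inequality is immediate (any single set $X$ can be regarded as a constant sequence $X_n = X$). The key observation is that the P-ideal hypothesis allows us to collapse any countable family of sets in $\mathcal{I}$ into a single pseudo-union, reducing the ``countable'' variant of the additivity to the one-set variant.

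Concretely, I would take a witnessing family $\mathcal{A} \subseteq \mathcal{I}$ for $\mathrm{add}^*(\mathcal{I})$, of cardinality $\kappa = \mathrm{add}^*(\mathcal{I})$. I claim this same $\mathcal{A}$ witnesses $\mathrm{add}^*_\omega(\mathcal{I})$. Given any countable family $\{X_n : n \in \omega\} \subseteq \mathcal{I}$, I apply the P-ideal property to obtain a pseudo-union $X \in \mathcal{I}$, i.e.\ an element of $\mathcal{I}$ such that $X_n \subseteq^* X$ for all $n$. By the defining property of $\mathcal{A}$, there exists $A \in \mathcal{A}$ with $A \not\subseteq^* X$.

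To finish, one checks that this $A$ in fact satisfies $A \not\subseteq^* X_n$ for every $n$: if $A \subseteq^* X_n$ held for some $n$, then transitivity of $\subseteq^*$ combined with $X_n \subseteq^* X$ would yield $A \subseteq^* X$, contradicting the choice of $A$. Hence $\mathcal{A}$ witnesses $\mathrm{add}^*_\omega(\mathcal{I}) \leq |\mathcal{A}| = \mathrm{add}^*(\mathcal{I})$, as desired.

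There is no real obstacle here; the only ingredient beyond a short diagram chase is the P-ideal property, which is used in exactly one place to produce the pseudo-union. The proof is essentially a one-line reduction once that observation is made.
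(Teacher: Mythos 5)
Your proof is correct and is essentially the same argument as the paper's: both reduce the $\omega$-version to the single-set version by using the P-ideal property exactly once to replace a countable subfamily of $\mathcal{I}$ by a single pseudo-union. The paper just phrases it contrapositively (a family of size less than $\mathrm{add}_{\omega}^*(\mathcal{I})$ admits a countable almost-cover, hence a single pseudo-covering set), while you show directly that a minimal witness for $\mathrm{add}^*(\mathcal{I})$ already witnesses $\mathrm{add}_{\omega}^*(\mathcal{I})$.
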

\begin{proof}
    To prove $\mathrm{add}_{\omega}^*(\mathcal{I})\leq\mathrm{add}^*(\mathcal{I})$ let $\{X_{\alpha}:\alpha<\kappa\}\subseteq \mathcal{I}$ with $\kappa<\mathrm{add}_{\omega}^*(\mathcal{I})$. There are then $\{Y_{n}: n\in\omega\}\subseteq\mathcal{I}$ such that each $X_{\alpha}$ is almost-contained in one of the $Y_{n}$'s. Then, any pseudo-union $Y\in\mathcal{I}$ of the $Y_{n}$'s is a pseudo-union of the $X_{\alpha}$'s. This finishes the proof.
\end{proof}

We refer the reader to the next section to see the relationship between $\mathrm{add}_{\omega}^*(\mathcal{I})$ and other cardinal invariants. Right now, we will only focus on the relationship with the incompatibility shrinking number. Our contribution begins with the following estimate. 

\begin{proposition}\label{addStarLessThenAdd}
For any ideal $\mathcal{I}$, both $\mathrm{add}(m^{0}_{\mathcal{B}}(\mathcal{I}))$ and $\mathrm{add}(\ell^{0}_{\mathcal{B}}(\mathcal{I}))$ are greater or equal than $min\{\bbb,\mathrm{add}^{*}_{\omega}(\mathcal{I})\}$.
\end{proposition}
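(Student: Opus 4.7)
The plan is to start with a family $\{A_\alpha : \alpha < \kappa\}$ inside $\ell^0_{\mathcal{B}}(\mathcal{I})$ (or $m^0_{\mathcal{B}}(\mathcal{I})$) with $\kappa < \min\{\bbb, \mathrm{add}^*_\omega(\mathcal{I})\}$ and cover its union by a single generator of the form $L_{\psi^*}$ (respectively $M_{\psi^*}$). By Theorem \ref{MillerDich} (respectively Theorem \ref{SabokZaplDich}) each $A_\alpha$ is a countable union of generators, so there is no loss of generality in assuming $A_\alpha = L_{\phi_\alpha}$ (resp.\ $A_\alpha = M_{\phi_\alpha}$) for some $\phi_\alpha : \omega^{<\omega} \to \mathcal{I}$.

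First I would exploit $\kappa < \mathrm{add}^*_\omega(\mathcal{I})$ nodewise. For each $\sigma \in \omega^{<\omega}$, applying the defining property of $\mathrm{add}^*_\omega(\mathcal{I})$ to $\{\phi_\alpha(\sigma) : \alpha < \kappa\}$ yields a countable family $\{X_n(\sigma) : n \in \omega\} \subseteq \mathcal{I}$ such that for every $\alpha$ there is $k_\alpha(\sigma) \in \omega$ with $\phi_\alpha(\sigma) \subseteq^* X_{k_\alpha(\sigma)}(\sigma)$. Let $f_\alpha(\sigma) \in \omega$ be a bound on the finite set $\phi_\alpha(\sigma) \setminus X_{k_\alpha(\sigma)}(\sigma)$ and set $g_\alpha(\sigma) = \max\{k_\alpha(\sigma), f_\alpha(\sigma)\}$. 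Since $\omega^{<\omega}$ is countable and $\kappa < \bbb$, fix $g^* : \omega^{<\omega} \to \omega$ with $g_\alpha \leq^* g^*$ for every $\alpha$, and define
$$
\psi^*(\sigma) = \bigcup_{k \leq g^*(\sigma)} X_k(\sigma) \;\cup\; [0, g^*(\sigma)),
$$
which lies in $\mathcal{I}$ since $\mathcal{I} \supseteq \Fin$ and is closed under finite unions.

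Finally, for each $\alpha < \kappa$ the set $F_\alpha = \{\sigma : g_\alpha(\sigma) > g^*(\sigma)\}$ is finite, and by construction $\phi_\alpha(\sigma) \subseteq \psi^*(\sigma)$ for every $\sigma \notin F_\alpha$. Since any branch $x$ meets $F_\alpha$ in at most finitely many initial segments, any $x \in L_{\phi_\alpha}$ still satisfies $x(n) \in \psi^*(x|_n)$ for infinitely many $n$, hence $x \in L_{\psi^*}$; analogously any $x \in M_{\phi_\alpha}$ satisfies the same inclusion for all but finitely many $n$ and belongs to $M_{\psi^*}$. The main obstacle is the non-uniformity produced by $\mathrm{add}^*_\omega(\mathcal{I})$ alone: both the index $k_\alpha(\sigma)$ and the bound $f_\alpha(\sigma)$ on the finite symmetric difference depend on $\alpha$ and $\sigma$. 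The role of $\bbb$ in the argument is precisely to absorb this non-uniformity into the single dominating function $g^*$, ensuring that one generator $\psi^*$ suffices to cover the whole union.
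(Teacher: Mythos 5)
Your proposal is correct and follows essentially the same route as the paper's proof: reduce to generators $L_{\phi_\alpha}$ (resp.\ $M_{\phi_\alpha}$), apply $\mathrm{add}^*_{\omega}(\mathcal{I})$ nodewise to get a countable family at each $\sigma$, and then use $\kappa<\bbb$ to dominate simultaneously the indices and the bounds on the finite exceptional parts, absorbing everything into a single $\psi^*$ (the paper uses an increasing sequence $C^\sigma_n$ and two dominating functions where you take finite unions and one combined function, which is the same argument). Your explicit treatment of the finitely many exceptional nodes $F_\alpha$ along a branch is the same observation the paper uses implicitly when it concludes with "for infinitely many $n$" (resp.\ "for all but finitely many $n$").
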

\begin{proof}
Let $\kappa<min\{\bbb,\mathrm{add}^{*}_{\omega}(\mathcal{I})\}$, $T\in\LL_{\mathcal{I}}$ and suppose we are given a collection $\{L_{\phi_{\alpha}}:\alpha<\kappa\}$ such that $\phi_{\alpha}:\omega^{<\omega}\rightarrow\omega$. For each $\sigma\in\omega^{<\omega}$, as $\kappa<\mathrm{add}^{*}_{\omega}(\mathcal{I})$ there is a increasing sequence $\{C^{\sigma}_{n}:n\in\omega\}\subseteq\mathcal{I}$ such that for each $\alpha<\kappa$ there is $n_{\alpha}\in\omega$ with $\phi_{\alpha}(\sigma)\subseteq^{*}C^{\sigma}_{n_{\alpha}}$. For each $\alpha<\kappa$ let $g_{\alpha}:\omega^{<\omega}\rightarrow\omega$ be defined as $g_{\alpha}(\sigma)=n_{\alpha}$ and $f_{\alpha}:\omega^{<\omega}\rightarrow\omega$ be defined such that $\phi_{\alpha}(\sigma)\subseteq C^{\sigma}_{n_{\alpha}}\cup f_{\alpha}(\sigma)$. Now let $g:\omega^{<\omega}\rightarrow\omega$ and $f:\omega^{<\omega}\rightarrow\omega$ be such that $g$ dominates all $g_{\alpha}$'s and $f$ dominates all $f_{\alpha}$'s. Let $\phi:\omega^{<\omega}\rightarrow\mathcal{I}$ be defined as $\phi(\sigma)=C^{\sigma}_{g(\sigma)}\cup f(\sigma)$. We claim that $\bigcup_{\alpha}L_{\phi_{\alpha}}\subseteq L_{\phi}$. This will finish the proof. Let $x\in L_{\phi_{\alpha}}$. Then for infinitely many $n$'s we have $x(n)\in \phi_{\alpha}(x|_{n})\subseteq C^{x|_{n}}_{n_{\alpha}}\cup f_{\alpha}(x|_{n})\subseteq C^{x|_{n}}_{g(x|_{n})}\cup f(x|_{n})=\phi(x|_{n})$ as the sequence of the $C^{x|_{n}}_{k}$'s, $k\in\omega$ is increasing.
\end{proof}

In the case of Miller-like trees, we have the following estimate that uses the hereditary version of the covering number, which is a well-known cardinal invariant (for example, see \cite{Zapl1}):
$$
\mathrm{cov}_h(\ideal{I}) = \min\{ |\mathcal{A}| : \mathcal{A} \subseteq \mathcal{I} \text{ such that }\bigcup \mathcal{A} \text{ covers a Borel } \ideal{I} \text{ positive set} \}.
$$

\begin{theorem}\label{ISmainIneq}
For any ideal $\ideal{I}$, the following inequalities hold.
$$\min\{\bbb,\mathrm{cov}_h(m^{0}_{\mathcal{B}}(\mathcal{I}) )\}\leq \mathrm{is}_h(\MM_{\mathcal{I}})\leq \mathrm{cov}_h(m^{0}_{\mathcal{B}}(\mathcal{I})).$$ 
In particular, if $\mathbb{M}_\mathcal{I}$ is homogeneous, then the $h$'s can be dropped. The inequalities hold for $\FM$ as well.
\end{theorem}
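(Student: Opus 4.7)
The upper bound $\mathrm{is}_h(\MM_\mathcal{I})\leq\mathrm{cov}_h(m^0_\mathcal{B}(\mathcal{I}))$ is immediate. Unfolding $\mathrm{cov}_h$, choose a Borel $m^0_\mathcal{B}(\mathcal{I})$-positive set $B$ and a family $\mathcal{A}\subseteq m^0_\mathcal{B}(\mathcal{I})$ of cardinality $\mathrm{cov}_h(m^0_\mathcal{B}(\mathcal{I}))$ with $B\subseteq\bigcup\mathcal{A}$. Theorem~\ref{SabokZaplDich} produces $T\in\MM_\mathcal{I}$ with $[T]\subseteq B$, and then $\mathcal{A}$ intersects $[S]$ for every $S\leq T$ in $\MM_\mathcal{I}$, so $\mathrm{is}_h(\MM_\mathcal{I})\leq\mathrm{is}(\MM_\mathcal{I}\mid T)\leq|\mathcal{A}|$.

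For the lower bound, fix $\kappa<\min\{\bbb,\mathrm{cov}_h(m^0_\mathcal{B}(\mathcal{I}))\}$, a tree $T\in\MM_\mathcal{I}$, and a family $\{X_\alpha:\alpha<\kappa\}\subseteq m^0_\mathcal{B}(\mathcal{I})$; the goal is to produce $S\leq T$ in $\MM_\mathcal{I}$ with $[S]\cap X_\alpha=\emptyset$ for every $\alpha$. By the $\sigma$-ideal structure of $m^0_\mathcal{B}(\mathcal{I})$ and Theorem~\ref{SabokZaplDich}, I may assume $X_\alpha=M_{\phi_\alpha}$ for some $\phi_\alpha:\omega^{<\omega}\to\mathcal{I}$. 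The plan is to adapt the two-step structure of the proof of Proposition~\ref{addStarLessThenAdd}, substituting $\mathrm{cov}_h(m^0_\mathcal{B}(\mathcal{I}))$ for $\mathrm{add}^*_\omega(\mathcal{I})$: construct a single $\phi:\omega^{<\omega}\to\mathcal{I}$ with $\bigcup_{\alpha<\kappa}M_{\phi_\alpha}\subseteq M_\phi$. Once this is achieved, $M_\phi\in m^0_\mathcal{B}(\mathcal{I})$, and since $[T]\notin m^0_\mathcal{B}(\mathcal{I})$ while $[T]\setminus M_\phi$ is Borel, Theorem~\ref{SabokZaplDich} applied to $[T]\setminus M_\phi$ yields the desired $S$.

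To build $\phi$, enumerate $\omega^{<\omega}=\{\sigma_k:k\in\omega\}$ with $|\sigma_k|\to\infty$. The first step, using $\kappa<\mathrm{cov}_h(m^0_\mathcal{B}(\mathcal{I}))$, is to produce for each $\sigma$ an increasing countable sequence $\{C^\sigma_n:n\in\omega\}\subseteq\mathcal{I}$ such that for every $\alpha<\kappa$ there is $n^\sigma_\alpha\in\omega$ with $\phi_\alpha(\sigma)\subseteq^* C^\sigma_{n^\sigma_\alpha}$. The idea is that if such a local countable absorption failed at some $\sigma$, then from the family $\{M_{\phi_\alpha}\}$ one could construct $\kappa$ many sets in $m^0_\mathcal{B}(\mathcal{I})$ covering a Borel $m^0_\mathcal{B}(\mathcal{I})$-positive subset of $[\sigma]\cap[T]$, contradicting the hypothesis on $\mathrm{cov}_h$. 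The second step uses $\kappa<\bbb$ to dominate, simultaneously, the functions $g_\alpha(\sigma)=n^\sigma_\alpha$ and the associated finite-error functions $f_\alpha:\omega^{<\omega}\to\omega$ (chosen so that $\phi_\alpha(\sigma)\subseteq C^\sigma_{n^\sigma_\alpha}\cup f_\alpha(\sigma)$) by single $g,f:\omega^{<\omega}\to\omega$; then set $\phi(\sigma)=C^\sigma_{g(\sigma)}\cup f(\sigma)$. The verification that $\bigcup_\alpha M_{\phi_\alpha}\subseteq M_\phi$ goes through verbatim from the proof of Proposition~\ref{addStarLessThenAdd}.

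The main obstacle is the local absorption step: the passage from the global covering hypothesis $\kappa<\mathrm{cov}_h(m^0_\mathcal{B}(\mathcal{I}))$ to a local cofinality statement in $\mathcal{I}$ at each $\sigma\in\omega^{<\omega}$ requires exploiting the Sabok--Zapletal dichotomy to turn a failure of countable absorption into a covering of a Borel positive set. The case of $\FM$ is handled by the same outline, using Theorem~\ref{RosNewDich} and $F_\phi$-sets in place of Theorem~\ref{SabokZaplDich} and $M_\phi$-sets. When $\MM_\mathcal{I}$ is homogeneous, $\mathrm{is}_h(\MM_\mathcal{I})=\mathrm{is}(\MM_\mathcal{I})$ and $\mathrm{cov}_h(m^0_\mathcal{B}(\mathcal{I}))=\mathrm{cov}(m^0_\mathcal{B}(\mathcal{I}))$ by the usual translation of bodies across Miller trees, so the bounds hold without the $h$'s.
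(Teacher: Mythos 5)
Your upper bound is fine, but the lower bound rests on a step that is not merely an obstacle --- it is false in general. You propose to absorb the given family into a single small set, i.e.\ to find one $\phi:\omega^{<\omega}\to\mathcal{I}$ with $\bigcup_{\alpha<\kappa}M_{\phi_\alpha}\subseteq M_{\phi}$, and to get the required ``local absorption'' $\{C^{\sigma}_{n}:n\in\omega\}\subseteq\mathcal{I}$ at each $\sigma$ from $\kappa<\mathrm{cov}_h(m^{0}_{\mathcal{B}}(\mathcal{I}))$. But that absorption is a statement about $\mathrm{add}^{*}_{\omega}(\mathcal{I})$ (equivalently, about $\mathrm{add}(m^{0}_{\mathcal{B}}(\mathcal{I}))$), and the covering number gives no control over it. Concretely, take $\mathcal{I}=\ED$ under MA$+\neg$CH: by the results of Section 4, $\mathrm{add}^{*}_{\omega}(\ED)=\omega_{1}$ and hence $\mathrm{add}(m^{0}_{\mathcal{B}}(\ED))=\omega_{1}$, so there are $\omega_{1}$ many (even constant) $\phi_\alpha$'s whose union $\bigcup_\alpha M_{\phi_\alpha}$ is not contained in any single $M_\phi$ (not even in a countable union of them); yet $\min\{\bbb,\mathrm{cov}_h(m^{0}_{\mathcal{B}}(\ED))\}\geq\min\{\bbb,\mathrm{cov}(\Meager)\}=\continuum>\omega_{1}$, so your hypothesis $\kappa<\min\{\bbb,\mathrm{cov}_h\}$ is satisfied for $\kappa=\omega_{1}$ while your intermediate goal is unattainable, and no contradiction with $\mathrm{cov}_h$ can arise from the failure of local absorption. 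Put differently, if your plan worked it would prove $\min\{\bbb,\mathrm{cov}_h(m^{0}_{\mathcal{B}}(\mathcal{I}))\}\leq\mathrm{add}(m^{0}_{\mathcal{B}}(\mathcal{I}))$, contradicting Corollary \ref{AddEqualOmegaOne}. The whole point of the incompatibility shrinking number is that one must find a tree $S\leq T$ with $[S]$ disjoint from the union \emph{even when that union is not in} $m^{0}_{\mathcal{B}}(\mathcal{I})$, so the reduction to a single Borel small set followed by the dichotomy on $[T]\setminus M_\phi$ cannot be the route.

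The actual argument builds $S$ directly by fusion inside $T$. Writing each $M_{\phi_\alpha}$ as a countable union of closed pieces of the form $\{x:\forall n\geq k\ x(n)\in\phi_\alpha(x|_{n})\}$, one constructs an increasing sequence of well-founded subtrees of $T$: at each terminal node and each successor in $T$ one uses $\kappa<\mathrm{cov}(m^{0}_{\mathcal{B}}(\mathcal{I})|_{[T]})$ to pick a point of $[T]$ outside all the pieces, uses $\kappa<\bbb$ to dominate, for all $\alpha$ simultaneously, the depths at which these points are seen to escape each closed piece, and extends to splitnodes of $T$ beyond those depths. The resulting $S\in\MM_{\mathcal{I}}$ then meets each closed piece in a compact, hence bounded, set of branches, and a second application of $\kappa<\bbb$ prunes $S$ to an $R\in\MM_{\mathcal{I}}$ whose branches escape all these compact sets. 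Note how this uses $\mathrm{cov}$ only to choose single points (no absorption of ideals on $\omega$ is needed) and uses $\bbb$ twice; this is what lets the bound pass beyond $\mathrm{add}(m^{0}_{\mathcal{B}}(\mathcal{I}))$.
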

\begin{proof}
We will focus only on $\MM_{\mathcal{I}}$ as the proof for $\FM$ is similar. By Theorem \ref{SabokZaplDich}, we only have to show that, for any $T \in \MM_{\mathcal{I}}$, the inequalities 
$$\min\{\bbb,\mathrm{cov}(m^{0}_{\mathcal{B}}(\mathcal{I}) |_{[T]} )\}\leq \mathrm{is}(\MM_{\mathcal{I}} | T )\leq \mathrm{cov}(m^{0}_{\mathcal{B}}(\mathcal{I})|_{[T]})$$ 
hold. Assume that $\kappa<min\{\bbb,\mathrm{cov}(m^{0}_{\mathcal{B}}(\mathcal{I}) |_{[T]})\}$ and a family $\{\phi_{\alpha}:\alpha<\kappa\}$ where $\phi_{\alpha}:\omega^{<\omega}\rightarrow\mathcal{I}$.
Let $M'_{\phi_{\alpha}}=\{x\in \omega^{<\omega}:\forall n\in\omega$ $x(n)\notin\phi_{\alpha}(x|_{n})\}$. Inductively we will construct a sequence $\{F_{n}:n\in\omega\}$ of well-founded subtrees of $T$ such that 
\begin{enumerate}[itemsep=0.3mm]
    \item $F_{n+1}$ is an end-extension of $F_{n}$ and $term(F_{n})\subseteq split(T)$,
    \item for every $\sigma\in term(F_{n})$ if $n\in succ_{T}(\sigma)$ then there is unique $\tau\in term(F_{n+1})$ such that $\sigma^{\frown}n \subseteq\tau$,
    \item for every $\sigma\in term(F_{n})$ there are only finitely many $\tau\in term(F_{n+1})$ such that $[\tau]\cap M'_{\phi_{\alpha}}=\emptyset$.
\end{enumerate}
Assume that $F_{n}$ has been constructed and fix $\sigma\in term(F_{n})$. For each $n\in succ_{T}(\sigma)$ we can choose a point $x^{\sigma}_{n}\in [\sigma^{\frown}n]\setminus\bigcup_{\alpha} M'_{\phi_{\alpha}}$ as $\kappa<\mathrm{cov}(m^{0}_{\mathcal{B}}(\mathcal{I}) |_{[T]})$ (and clearly this cardinal is the same when restricted to basic clopen sets). For every $\alpha<\kappa$ there is $f_{\alpha}\in\Baire$ such that 
\begin{center}
    $\forall n\in succ_{T}(\sigma)$ $x^{\sigma}_{n}|_{f_{\alpha}(n)}\notin M'_{\phi_{\alpha}}$
\end{center}
As $\kappa<\bbb$ there must be a $g\in\Baire$ such that it dominates all $f_{\alpha}$'s. For $n\in\omega\land \sigma\in term(F_{n})$ let $\rho^{n}_{\sigma}$ be the shortest splitnode of $T$ which extends $x^{\sigma}_{n}|_{g(n)}$. Let then $F_{n+1}$ be the tree generated by the set $\{\rho^{n}_{\sigma}:n\in\omega\land \sigma\in term(F_{n})\}$. Let $S=\bigcup_{n}F_{n}\in\MM_{\mathcal{I}}$. The condition (3) implies that $[S]\cap M'_{\phi_{\alpha}}$ is compact and therefore bounded by some $h_{\alpha}\in\baire$. Let $h\in\baire$ be such that it dominates all $h_{\alpha}$'s and let $R\in\MM_{\mathcal{I}}$ be such that $[R]$ consists of branches of $[S]$ which are not dominated by $h$. Then, clearly $[R]\cap\bigcup_{\alpha}M'_{\alpha}=\emptyset$ for all $\alpha<\kappa$.
\end{proof}

Since each $M_\phi$ is a meager set, then we have that $\mathrm{cov}(\mathcal{M}) \leq \mathrm{cov}(m^{0}_{\mathcal{B}}(\mathcal{I}) |_{[T]} )$, and therefore the previous theorem implies that $\min \{ \bbb, \mathrm{cov} (\mathcal{M}) \} \leq \mathrm{is}_h(\MM_\mathcal{I})$. For the types of trees that add Cohen reals, we are able to show that this lower bound is precise. For proving this, we will use the following result of J. Brendle from \cite{Brendle}.

\begin{theorem}\label{BreMillerOfCohen}
    If $M$ is submodel of $V$ and $V$ contains a Miller tree of Cohen real over $M$, then $V$ contains a dominating real over $M$.
\end{theorem}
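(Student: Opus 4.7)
The plan is to extract a dominating real $d \in V$ directly from the combinatorial skeleton of the Miller tree $T$, and then to verify dominance by contradiction, using the Cohen-ness of every branch of $T$ to rule out counterexamples via an internal branch-construction argument.

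First I would thin $T$ to a Miller subtree $T^* \in V$ in which every splitting node is $\omega$-splitting; this is the standard fusion, pruning each splitting node's successors to an infinite subset. The resulting $T^*\subseteq T$ is still a Miller tree in $V$, and all of its branches remain Cohen over $M$. I would then enumerate the splitting nodes of $T^*$ coherently with the tree structure of $\omega^{<\omega}$, as $\{\tau_s : s\in\omega^{<\omega}\}$ (so $\tau_s\subsetneq\tau_t$ iff $s\subsetneq t$), and list the successors of $\tau_s$ in increasing order as $succ_{T^*}(\tau_s)=\{a^s_0<a^s_1<a^s_2<\dots\}$.

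Next I would define the candidate dominating real $d\in V\cap\omega^{\omega}$ by a finite-maximum formula over the skeleton, e.g.
\[
d(n) \;=\; \max\bigl\{\, a^s_k : s\in\omega^{\leq n},\ \max_{i<|s|} s(i)\leq n,\ k\leq n \,\bigr\},
\]
which is finite because it is a maximum over a finite set, and which informally measures how large the first $n$ successors at the ``first $n$'' splitting nodes can be. The point of this definition is that it lives in $V$ (it is computed from the tree $T^*\in V$) while still being sensitive to the growth pattern of every splitting node of $T^*$ in a uniform way.

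Finally I would argue that $d$ dominates every $g\in M\cap\omega^{\omega}$. Assume for contradiction that some $g\in M$ satisfies $g(n)>d(n)$ for infinitely many $n$. The set $E_g=\{x\in\omega^{\omega}:\forall^{\infty}n\ x(n)\leq g(n)\}$ is a meager $F_\sigma$-set coded in $M$, so no Cohen real over $M$ belongs to $E_g$. Using the failure of domination, I would construct inside $V$ a branch $x\in[T^*]$ that lies in $E_g$: descending through the splitting nodes, at each $\tau_s$ encountered select a successor $a^s_k\leq g(|\tau_s|)$, which is available for infinitely many levels by the hypothesis $g(n)>d(n)$. The resulting branch $x\in V$ would then lie in $E_g\cap[T^*]\subseteq[T]\cap E_g$, contradicting the hypothesis that $[T]$ consists of Cohen reals over $M$. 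The main obstacle will be the bookkeeping in this last step: the enumeration of splitting nodes, the formula for $d$, and the stepwise branch-construction must be calibrated so that only finitely many ``bad'' levels are lost, and it may be necessary to pre-thin $T^*$ further (or replace $g$ by a suitable $M$-modification) to guarantee that selections $a^s_k\leq g(|\tau_s|)$ can be made cofinally along the chosen branch.
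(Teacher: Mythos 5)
There is no internal proof to compare against here: the paper quotes this statement as a known result of Brendle (Theorem \ref{BreMillerOfCohen} is cited from \cite{Brendle}), so your attempt has to stand on its own. Its overall shape — extract $d\in V$ from the splitting skeleton of the tree, and if some $g\in M$ satisfies $g(n)>d(n)$ infinitely often, steer a branch of the tree into a meager set coded in $M$, contradicting that all branches are Cohen over $M$ — is indeed the right one. But the decisive step has a genuine gap. Membership in $E_g=\{x:\forall^\infty n\ x(n)\le g(n)\}$ demands control of the branch at \emph{all but finitely many} coordinates, while your hypothesis only gives infinitely many (possibly extremely sparse) ``good'' scales $n$ with $g(n)>d(n)$. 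Your construction at best controls the branch at the splitting coordinates you pass through; every coordinate strictly between consecutive splitting nodes is dictated by the tree, and is bounded only by $d$ evaluated at scales where $d$ may vastly exceed $g$. So the branch you build will in general satisfy $x(m)>g(m)$ at infinitely many positions $m$, it never enters $E_g$, and no contradiction with Cohen-ness is obtained. Even the selection step is unjustified as stated: $g(n)>d(n)$ for infinitely many $n$ yields, at a splitting node $\tau_s$ whose enumeration parameters are below a good scale $n$, a successor bounded by $g(n)$ — not by $g(|\tau_s|)$, and $|\tau_s|$ can be far smaller than the least good scale applicable to $\tau_s$.

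Note also that this cannot be fixed by the ``$M$-modification of $g$'' you allude to, nor by switching to an ``infinitely often small'' target set: in $\omega^\omega$ every set of the form $\{x:\exists^\infty n\ x(n)\le g(n)\}$ is comeager, hence contains all Cohen reals and is useless for a contradiction, while any useful set must be of ``eventually'' type — and with only infinitely many good scales the pointwise set $E_g$ (or $\{x:\forall^\infty n\ \max(x\restriction n)\le g(n)\}$) is exactly what your construction cannot reach, since between good scales the tree forces uncontrolled values at positions where $g$ is still small. The actual argument (Brendle's) requires a differently calibrated meager set, whose defining condition is tested on blocks or scales determined by $g$, or by the real itself (for instance: ``from some point on, every value exceeding $g$ at its position is followed, within a distance measured by $g$ of that value, by a still larger value''), together with a construction that interleaves the good scales with the lengths of the splitting nodes along the branch and uses the freedom to pick very large successor values, not small ones. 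That is a genuinely different mechanism, not bookkeeping, so as written the proof does not go through.
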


We are ready to prove the following theorem.

\begin{theorem}
If $\MM_{\mathcal{I}}$ adds Cohen reals then $\mathrm{is}_h(\MM_{\mathcal{I}}) \leq min\{\bbb,\mathrm{cov}(m^{0}_{\mathcal{B}}(\mathcal{I}))\}$. \label{millercohenreals}
\end{theorem}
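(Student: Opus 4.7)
The plan is to reduce to proving the single bound $\mathrm{is}_h(\MM_\mathcal{I}) \leq \bbb$, since Theorem~\ref{ISmainIneq} already gives $\mathrm{is}_h(\MM_\mathcal{I}) \leq \mathrm{cov}_h(m^0_\mathcal{B}(\mathcal{I})) \leq \mathrm{cov}(m^0_\mathcal{B}(\mathcal{I}))$. The strategy for the $\bbb$ bound is to convert an unbounded family $\{h_\alpha : \alpha < \bbb\} \subseteq \omega^\omega$ into a family of $\bbb$ Borel sets in $m^0_\mathcal{B}(\mathcal{I})$ (below some fixed $T'' \in \MM_\mathcal{I}$) whose union meets every $[S]$ for $S \leq T''$, by first obtaining a continuous coding of the generic into a dominating real.

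First I would fix an arbitrary $T \in \MM_\mathcal{I}$ and invoke the continuous reading of names for $\MM_\mathcal{I}$: this produces $T' \leq T$ and a continuous $f : [T'] \to 2^\omega$ with $T' \forces{f(\dot{r}_{gen}) = \dot{c}}$ for some name $\dot{c}$ for a Cohen real (using the hypothesis that $\MM_\mathcal{I}$ adds Cohens). Theorem~\ref{BreMillerOfCohen} is then applied to promote this Cohen real into a dominating real in the $\MM_\mathcal{I}$-extension below $T'$; a second application of continuous reading yields $T'' \leq T'$ and a continuous $g : [T''] \to \omega^\omega$ such that $T'' \forces{g(\dot{r}_{gen})}$ eventually dominates every ground-model real.

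With $g$ in hand I define, for each $\alpha < \bbb$,
\[
B_\alpha = \{ x \in [T''] : g(x) \not\geq^* h_\alpha \},
\]
which is Borel in $[T'']$. Each $B_\alpha$ belongs to the tree ideal of $\MM_\mathcal{I} | T''$: for any $S \leq T''$ in $\MM_\mathcal{I}$ we have $S \forces{g(\dot{r}_{gen}) \geq^* h_\alpha}$, and a standard density/continuity argument (the closed set $\{ x : \forall m \geq N,\, g(x)(m) \geq h_\alpha(m) \}$ must be forced to contain $\dot{r}_{gen}$ for some ground-model $N$) produces $R \leq S$ in $\MM_\mathcal{I}$ with $[R] \cap B_\alpha = \emptyset$. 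Conversely, if some $S \leq T''$ in $\MM_\mathcal{I}$ satisfied $[S] \cap \bigcup_{\alpha < \bbb} B_\alpha = \emptyset$, picking any $x \in [S]$ would produce a single real $g(x)$ dominating every $h_\alpha$, contradicting unboundedness of the family. Hence $\{B_\alpha : \alpha < \bbb\}$ witnesses $\mathrm{is}(\MM_\mathcal{I} | T'') \leq \bbb$, and since $T$ was arbitrary we conclude $\mathrm{is}_h(\MM_\mathcal{I}) \leq \bbb$.

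The step requiring most care is the invocation of Theorem~\ref{BreMillerOfCohen}: one has to verify that the continuous Cohen coding $f$ together with the Miller-like structure of $\MM_\mathcal{I} | T'$ genuinely meets the hypotheses of Brendle's theorem, so that a real dominating over $V$ actually appears in the $\MM_\mathcal{I}$-extension below $T'$ (and hence is readable by a continuous $g$ on a suitable refinement $T''$). Once this transfer is justified, the unbounded-family construction producing the $B_\alpha$ is essentially dual to the lower-bound argument in Theorem~\ref{ISmainIneq} and goes through routinely.
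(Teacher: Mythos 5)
The step that breaks is your invocation of Theorem \ref{BreMillerOfCohen} to ``promote'' the Cohen real into a dominating real in the $\MM_{\mathcal{I}}$-extension below $T'$. Brendle's theorem needs the \emph{outer} model to contain a Miller (superperfect) tree \emph{all of whose branches} are Cohen over the inner model; you apply it with the pair $(V, V[G])$, but the hypothesis only gives you the single Cohen real $f(\dot r_{gen})$ in $V[G]$, and nothing produces a Miller tree of Cohen reals over $V$ inside the extension. So the existence of $T''$ and a continuous $g$ with $T''\forces{g(\dot r_{gen}) \text{ dominates } V\cap\baire}$ is unjustified — and in fact it is false, not merely unproven. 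Your verification that each $B_\alpha$ lies in the tree ideal below $T''$ rests exactly on $S\forces{g(\dot r_{gen})\geq^{*}h_\alpha}$ for all $S\leq T''$, i.e.\ on $\MM_{\mathcal{I}}$ adding a dominating real below $T''$. But by Theorem \ref{SabokZaplDich} the ideal $m^{0}_{\mathcal{B}}(\mathcal{I})$ is $\sigma$-generated by closed sets (each $M_\phi$ is a countable union of closed sets) and $\MM_{\mathcal{I}}$ is forcing equivalent to $\Borel\setminus m^{0}_{\mathcal{B}}(\mathcal{I})$; as the paper itself recalls (citing Zapletal) when explaining why $\ell^{0}_{\mathcal{B}}(\mathcal{I})$ \emph{cannot} be $\sigma$-generated by closed sets, quotients of this form do not add dominating reals. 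Hence no such $T''$, $g$ or family $\{B_\alpha:\alpha<\bbb\}$ can exist, and the $\bbb$-bound — which is the whole content of the theorem, since your reduction via Theorem \ref{ISmainIneq} to proving $\mathrm{is}_h(\MM_{\mathcal{I}})\leq\bbb$ is fine — is not obtained.

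The paper gets the $\bbb$-bound without ever making the forcing add a dominating real: the Cohen-to-dominating transfer happens inside $V$, over a \emph{small} submodel, and only under a contradiction hypothesis. Assuming $\bbb<\mathrm{is}(\MM_{\mathcal{I}}\,|\,W)$, one fixes an elementary submodel $\mathbf{M}$ of size $\bbb$ containing an unbounded family and the Cohen-coding map $f$, and uses the assumption to find $T\leq W$ with $[T]\cap M_\phi=\emptyset$ for every $\phi\in\mathbf{M}$. Since preimages under $f$ of closed nowhere dense sets are in $m^{0}_{\mathcal{B}}(\mathcal{I})|_{[W]}$, elementarity gives that every point of $f[[T]]$ is Cohen over $\mathbf{M}$, and the $\Fin$-dichotomy shows $f[[T]]$ is not $\sigma$-compact, hence contains the branches of an ordinary Miller tree $S_T$ lying in $V$. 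Now Theorem \ref{BreMillerOfCohen} is applied to the pair $(\mathbf{M},V)$: it yields a real in $V$ dominating $\mathbf{M}\cap\baire$, contradicting that $\mathbf{M}$ contains an unbounded family. If you want to repair your argument, this is the structure you need: the dominating real must appear over a model of size $\bbb$ produced by the contradiction hypothesis, not over $V$ in the generic extension.
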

\begin{proof}
    Let $W \in \MM_\mathcal{I}$ and let $f : \baire \rightarrow \baire$ be continuous such that $W\forces{f(\dot{r}_\text{gen}) \text{ is Coh}$ $\text{en over }V}$. We will show that $\mathrm{is}(\MM_{\mathcal{I}} | W) \leq min\{\bbb,\mathrm{cov}(m^{0}_{\mathcal{B}}(\mathcal{I})|_{[W]})\}$. We will need the following observation.

\begin{claim}
    For every $N \subseteq \baire$ closed nowhere dense we have $f^{-1}[N] \in m^0_\mathcal{B}(\mathcal{I})|_{[W]}$.
\end{claim}
\begin{proof}[Proof of the claim.]
    Notice that $f^{-1}[N]$ is analytic, therefore by dichotomy for $\MM_{\mathcal{I}}$ trees, we have that either $f^{-1}[N]$ contains the branches of a tree $T \in \MM_{\mathcal{I}}$ or $f^{-1}[N] \in m^0_\mathcal{B}(\mathcal{I})$. The former is impossible as it would imply that $T\forces{f(\dot{r}_\text{gen}) \in N}$, so the conclusion follows.
\end{proof}
     In particular, for every $T \in \MM_{\mathcal{I}} | W$, $f[[T]]$ is not $\sigma$-compact and therefore, using Theorem \ref{SabokZaplDich} for Miller trees, $f[[T]]$ contains the branches of a tree $S_T \in \mathbb{M}$. Assume that $min\{\bbb,\mathrm{cov}(m^{0}_{\mathcal{B}}({\mathcal{I}})|_{[W]})\}<\mathrm{is}(\MM_{\mathcal{I}})$. As $\mathrm{is}(\MM_{\mathcal{I}} | W)\leq \mathrm{cov}(m^{0}_{\mathcal{B}}(\mathcal{I}) |_{[W]})$, then $\bbb<\mathrm{is}(\MM_{\mathcal{I}} | W)$. Pick $\mathcal{F} = \{f_{\alpha}:\alpha<\bbb\}\subseteq\omega^{\omega}$ an unbounded family and let $\mathbf{M}$ an elementary substructure of $H(\kappa)$ such that $|\mathbf{M}| = \bbb$ and $\mathcal{F} \cup \{F,f \} \subseteq \mathbf{M} $. As $\bbb<\mathrm{is}(\MM_{\mathcal{I}} | W)$ there is a tree $T\in\MM_{\mathcal{I}} | W$ such that $[T]\cap M_{\phi}=\emptyset$ for each $\phi:\omega^{<\omega}\rightarrow\mathcal{I}$, $\phi\in \mathbf{M}$. Then, by elementarity and by the previous remark, for every nowhere dense set $N \in \mathbf{M}$ there is $\phi \in \mathbf{M}$ such that $f^{-1}[N] \subseteq M_\phi$, which imply that every $x \in [S_T]$ is Cohen over $\mathbf{M}$, so by Theorem \ref{BreMillerOfCohen}, there must be a real $d \in \baire$ such that $d$ dominates every $y \in \baire \cap \mathbf{M}$, which is impossible since $\mathbf{M}$ contains an unbounded family.
\end{proof}

In \cite{RosNew}, it is proven that $\FM$ adds Cohen reals and that $\mathrm{cov}(fm^{0}_{\mathcal{B}})=\mathrm{cov}(\Meager)$. It is also well known that $\mathrm{add}(\mathcal{M})=min\{\bbb, \mathrm{cov}(\Meager)\}$ (for a proof, consult \cite{BJ}). As a corollary we get the following.

\begin{corollary}
$\mathrm{is}_h(\FM)=\mathrm{is}(\FM)=\mathrm{add}(\mathcal{M})$.
\end{corollary}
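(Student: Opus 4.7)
The plan is to squeeze $\mathrm{is}(\FM)$ between $\min\{\bbb,\mathrm{cov}(\Meager)\}$ from below and above, and then invoke the classical identity $\mathrm{add}(\Meager)=\min\{\bbb,\mathrm{cov}(\Meager)\}$. All the heavy lifting has already been done: \cite{RosNew} supplies both the fact that $\FM$ adds Cohen reals and the equality $\mathrm{cov}(fm^{0}_{\mathcal{B}})=\mathrm{cov}(\Meager)$, while Theorems \ref{ISmainIneq} and \ref{millercohenreals} are formulated so that they transfer directly to $\FM$ by substituting the Rosłanowski--Newelski dichotomy (Theorem \ref{RosNewDich}) for the Sabok--Zapletal dichotomy (Theorem \ref{SabokZaplDich}).

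For the lower bound, Theorem \ref{ISmainIneq} is already stated to hold for $\FM$, giving $\min\{\bbb,\mathrm{cov}_h(fm^{0}_{\mathcal{B}})\}\leq \mathrm{is}_h(\FM)$. Since $\FM$ is homogeneous, the hereditary subscripts can be dropped, yielding $\min\{\bbb,\mathrm{cov}(\Meager)\}\leq\mathrm{is}(\FM)$.

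For the upper bound, the plan is to reproduce the argument of Theorem \ref{millercohenreals} verbatim with $\FM$ in the role of $\MM_{\mathcal{I}}$. Given $W\in\FM$ and a continuous $f:[W]\to\baire$ with $W\forces{f(\dot{r}_{\mathrm{gen}})\text{ is Cohen over }V}$, the key claim---that $f^{-1}[N]\in fm^{0}_{\mathcal{B}}$ for every closed nowhere dense $N\subseteq\baire$---follows by exactly the same reasoning: an $\FM$-subtree of $f^{-1}[N]$ would force $f(\dot{r}_{\mathrm{gen}})\in N$, so by Theorem \ref{RosNewDich} the preimage must lie in $fm^{0}_{\mathcal{B}}$. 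The remainder of the argument---using that $f[[T]]$ cannot be $\sigma$-compact, extracting a classical Miller subtree via Theorem \ref{SabokZaplDich} in the case $\mathcal{I}=\Fin$, and invoking Theorem \ref{BreMillerOfCohen} to contradict the assumption $\bbb<\mathrm{is}(\FM\mid W)$---then carries through unchanged. This produces $\mathrm{is}_h(\FM)\leq \min\{\bbb,\mathrm{cov}(fm^{0}_{\mathcal{B}})\}=\min\{\bbb,\mathrm{cov}(\Meager)\}$.

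Combining both inequalities with $\mathrm{add}(\Meager)=\min\{\bbb,\mathrm{cov}(\Meager)\}$ (recalled just before the corollary) yields $\mathrm{is}_h(\FM)=\mathrm{is}(\FM)=\mathrm{add}(\Meager)$. No serious obstacle is anticipated here: the statement is essentially a bookkeeping consequence, and the only thing to verify carefully is that the two main theorems of this section apply to $\FM$, which is transparent from the way the tree dichotomy enters their proofs.
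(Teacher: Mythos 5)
Your proposal is correct and matches the paper's intended argument: the lower bound comes from Theorem \ref{ISmainIneq} (explicitly stated to hold for $\FM$), the upper bound from running the proof of Theorem \ref{millercohenreals} for $\FM$ with the Ros\l{}anowski--Newelski dichotomy in place of the Sabok--Zapletal one, and the conclusion follows from $\mathrm{cov}(fm^{0}_{\mathcal{B}})=\mathrm{cov}(\Meager)$, homogeneity of $\FM$, and $\mathrm{add}(\Meager)=\min\{\bbb,\mathrm{cov}(\Meager)\}$. The paper treats the corollary as exactly this bookkeeping, so your write-up, which makes the transfer of \ref{millercohenreals} to $\FM$ explicit, is if anything slightly more careful than the original.
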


An immediate consequence is that, under $\mathrm{add}(\mathcal{M})=\continuum$ we have $\mathrm{cof}(fm_{0})>\continuum$, which was known only under CH (\cite{BreKhWoh}). One can easily show, using the same ideas, that both $\mathrm{is}(\MM_{\nwd})$ and $\mathrm{is}(\LL_{\nwd})$ are equal to $\mathrm{add}(\mathcal{M})$. A general answer to the main problem remains unknown. 

\begin{question}
    Is is consistent that $\mathrm{cof}(fm^{0})=\continuum$?
\end{question}

As we seen in Proposition \ref{addiscov}, the additivity and the covering of the Borel part of these ideals are heavily involved with the incompatibility shrinking number. In the next section, we will focus on estimating them.

\section{Cardinal invariants of the Borel parts}

This section is devoted to calculate some of the cardinal invariants of the Borel parts $m^{0}_{\mathcal{B}}(\mathcal{I})$ and $\ell^{0}_{\mathcal{B}}(\mathcal{I})$. We will focus only on the additivity and the covering numbers. Similar work has been done in \cite{BreSh}, where the authors calculate these in the case when $\mathcal{I}$ is a maximal ideal. We start with the following general fact concerning the trees that add Cohen reals.

\begin{theorem}\label{treeAddsCohen}
	Let $\TT$ be a combinatorial tree type with continuous reading of names which adds Cohen reals. Then $\mathrm{add}(t^{0})\leq \mathrm{add}(\Meager)$ and $\mathrm{cov}(t^{0})\leq \mathrm{cov}(\Meager)$. These inequalities hold also for the Borel part of $t^{0}$.
\end{theorem}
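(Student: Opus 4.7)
The plan is to use the two hypotheses to construct a natural map from the meager ideal on $2^{\omega}$ into $t^{0}_{\mathcal{B}}$ and then to read off the desired inequalities using a careful choice of witnessing family on the meager side. First, since $\TT$ adds a Cohen real $\dot{c}$, by continuous reading of names there exist $S \in \TT$ and a continuous $f : [S] \to 2^{\omega}$ with $S \forces{f(\dot{r}_{\mathrm{gen}}) = \dot{c}}$, so $S$ (and every $R \leq S$ in $\TT$) forces $f(\dot{r}_{\mathrm{gen}})$ to be Cohen over $V$. The key lemma to prove next is that for every Borel meager $M \subseteq 2^{\omega}$, $f^{-1}[M]$ lies in $t^{0}_{\mathcal{B}}$: by Proposition \ref{aresigmaideals} one reduces to closed nowhere dense $M$, and for any $R \leq S$ in $\TT$ Cohenness yields $R \forces{f(\dot{r}_{\mathrm{gen}}) \in 2^{\omega}\setminus M}$; a density argument combined with continuous reading and the arboreal property then produces $R'' \leq R$ in $\TT$ and a basic clopen $[\tau] \subseteq 2^{\omega}\setminus M$ with $f[[R'']] \subseteq [\tau]$, so $[R''] \cap f^{-1}[M] = \emptyset$. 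For trees $T$ not below $S$, one works below a restriction $T|_{\sigma}$ that is compatible with $S$, or invokes weak homogeneity.

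For the covering inequality, a meager Borel cover $\{M_{\alpha} : \alpha < \mathrm{cov}(\Meager)\}$ of $2^{\omega}$ produces a family $\{f^{-1}[M_{\alpha}]\} \subseteq t^{0}_{\mathcal{B}}$ covering $[S]$. Running the construction of $(S,f)$ along a maximal antichain of $\TT$ (or invoking weak homogeneity) gives a cover of $\omega^{\omega}$ by $t^{0}_{\mathcal{B}}$ sets of size $\mathrm{cov}(\Meager)$, yielding $\mathrm{cov}(t^{0}) \leq \mathrm{cov}(t^{0}_{\mathcal{B}}) \leq \mathrm{cov}(\Meager)$.

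The additivity bound is the subtle one: a naive witness $\{M_{\alpha}\}$ for $\mathrm{add}(\Meager)$ is not enough, since $\bigcup M_{\alpha}$ being non-meager is compatible with $f[[R]] \subseteq 2^{\omega}\setminus \bigcup M_{\alpha}$ for some $R \leq S$. To fix this I would enumerate a basis $\{U_{n} : n \in \omega\}$ of $2^{\omega}$ and, for each $n$, pick $\mathrm{add}(\Meager)$ many meager subsets of $U_{n}$ with non-meager union in $U_{n}$; combining the families produces $\{N_{\beta} : \beta < \mathrm{add}(\Meager)\}$ such that $N = \bigcup_{\beta} N_{\beta}$ is non-meager in every non-empty basic open set of $2^{\omega}$. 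Then $\{f^{-1}[N_{\beta}]\} \subseteq t^{0}_{\mathcal{B}}$, and if $f^{-1}[N] \in t^{0}$, some $R \leq S$ in $\TT$ satisfies $f[[R]] \cap N = \emptyset$. But Cohenness prevents $f[[R]]$ from being covered by a Borel meager set, so by the Baire property of analytic sets $f[[R]]$ is comeager in some basic open $U$; in $U$, $N$ is non-meager and $f[[R]]$ is comeager, so they must intersect, contradicting $f[[R]] \cap N = \emptyset$. I expect the main obstacle to be the passage from the ``below $S$'' estimates to the global $\mathrm{cov}(t^{0})$ and $\mathrm{add}(t^{0})$ when $\TT$ is not homogeneous; this is handled by iterating along a maximal antichain, but is the only substantial technical wrinkle in the argument.
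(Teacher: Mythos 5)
Your additivity argument is correct but genuinely different from the paper's. The paper proves a transfer statement: if $\bigcup_{\alpha}f^{-1}[M_{\alpha}]\in t^{0}$ then $\bigcup_{\alpha}M_{\alpha}$ is already meager; it gets this by extracting refining maximal antichains $\mathcal{A}_{n}$ whose members have long stems and decide long initial segments $c_{T}$ of the Cohen real, and checking that the open sets $U_{n}=\bigcup\{[c_{T}]:T\in\mathcal{A}_{n}\}$ are dense and disjoint from $\bigcup_{\alpha}M_{\alpha}$. You instead rig the witness on the meager side (a family of size $\mathrm{add}(\Meager)$, obtained by working inside each basic open set, whose union $N$ is non-meager in every nonempty open set) and then argue: if some $R\leq S$ had $f[[R]]\cap N=\emptyset$, then $f[[R]]$ is analytic, cannot be covered by a ground-model Borel meager set (Cohenness plus $\Pi^{1}_{1}$-absoluteness), hence by the Baire property is comeager in some basic open $U$, contradicting that $N\cap U$ is non-meager. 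This is valid and arguably more elementary; what you give up is the paper's stronger conclusion that an arbitrary witnessing family for $\mathrm{add}(\Meager)$ already works (you correctly observe that with your method it need not), but for the stated inequalities, including the Borel-part versions, your argument suffices.

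The genuine gap is in your globalization of the covering bound. Since you take $f$ defined only on $[S]$, the pullbacks of a meager cover of $2^{\omega}$ cover only $[S]$, and your patch "run the construction along a maximal antichain" does not control cardinality: nothing in the hypotheses bounds the size of a maximal antichain consisting of Cohen-reading conditions by $\mathrm{cov}(\Meager)$ (in these tree forcings such antichains typically have size $\continuum$), so this route only yields the trivial estimate $\mathrm{cov}(t^{0})\leq\continuum$, while $\mathrm{cov}(\Meager)<\continuum$ is consistent; and weak homogeneity is not among the hypotheses of the theorem, so it cannot be invoked either. The paper sidesteps this entirely by taking a single continuous $f:\omega^{\omega}\rightarrow\Cantor$ with $\forces{f(\dot{r}_{gen})=\dot{c}}$, i.e.\ the Cohen name is read continuously below the trivial condition (which is what actually happens in all the intended applications, e.g.\ Proposition \ref{LaverAddsCohen}); with a global $f$ the meager cover pulls back to a cover of all of $\omega^{\omega}$ at once, and the key lemma needs no case analysis. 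Relatedly, your case split for trees $T$ not below $S$ ("restriction compatible with $S$, or weak homogeneity") is not the right dichotomy; the correct and easy one is: $f^{-1}[N]$ is a closed subset of $[S]$, so either some $x\in[T]\setminus f^{-1}[N]$ gives a restriction $T|_{\sigma}$ with $[T|_{\sigma}]\cap f^{-1}[N]=\emptyset$, or $[T]\subseteq f^{-1}[N]\subseteq[S]$, which (both trees being pruned) gives $T\leq S$ and contradicts Cohenness. Note that this repair rescues your additivity bound with the local $f$, but not the covering bound, so for $\mathrm{cov}$ you should simply adopt the paper's global reading of the Cohen name.
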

\begin{proof}
Let $\dot{c}$ be a $\TT$-name for a Cohen real and let $f:\omega^{\omega}\rightarrow\Cantor$ be a continuous function such that $\forces{f(\dot{r}_{gen})=\dot{c}}$. We define a function  $F:\Meager\rightarrow t^{0}$ such that for $M\in\Meager$ we have $F(M)=\{x\in\omega^{\omega}:f(x)\in M\}$. First, we will check that 
	\begin{claim}
		$F(M)\in t^{0}$.
	\end{claim}
 \begin{proof}[Proof of the Claim]
    Write $M$ as an union $\bigcup_{n<\omega}N_{n}$ of nowhere dense sets $N_{n}$. As $F(M)=\bigcup_{n}f^{-1}[N_{n}]$ it is enough to show that $f^{-1}[N]\in t^{0}$ for a nowhere dense $N$. Let $T\in\TT$ be arbitrary. Let $S\in\TT$, $S\leq T$ be such that $S\forces{f(\dot{r}_{gen})\notin N}$. As $N$ is closed and $f$ continuous there is $\sigma\in \omega^{<\omega}$ and $R\leq S$ such that $R\forces{ \sigma\subseteq\dot{r}_{gen} \text{ and } [\sigma]\cap f^{-1}[N]=\emptyset}$. By absolutness $[\sigma]\cap f^{-1}[N]=\emptyset$ and $[R]\subseteq[\sigma]$ which finishes the proof of the claim.
\end{proof}
	 Note at this point, that if for some family $\{M_{\alpha}:\alpha<\kappa\}$ of meager subsets of $\Cantor$ we have $\bigcup_{\alpha<\kappa}M_{\alpha}=\Cantor$ then clearly $\bigcup_{\alpha<\kappa}F(M_{\alpha})=\omega^{\omega}$ which justifies $\mathrm{cov}(t^{0})\leq \mathrm{cov}(\Meager)$. We will check that, for every family $\{M_{\alpha}:\alpha<\kappa\}\subseteq\Meager$,
\begin{center}
    if  $\bigcup_{\alpha<\kappa}F(M_{\alpha})\in t^{0}$ then $\bigcup_{\alpha<\kappa}M_{\alpha}\in\Meager$.
\end{center}
    This clearly implies that $\mathrm{add}(t^{0})\leq \mathrm{add}(\Meager)$ and will finish the proof of the theorem.
	Assume then that for some $\{M_{\alpha}:\alpha<\kappa\}\subseteq\Meager$ we have $\bigcup_{\alpha<\kappa}F(M_{\alpha})\in t^{0}$. Then there are maximal antichains $\mathcal{A}_{n}\subseteq \TT$, $n\in\omega$ such that $\bigcup_{\alpha<\kappa}F(M_{\alpha})\subseteq \bigcup_{n} (\omega^{\omega}\setminus\bigcup_{T\in\mathcal{A}_{n}}[T])$ which means that
	\begin{center}
 $(\bigcup_{\alpha<\kappa}F(M_{\alpha}))\cap(\bigcap_{n\in\omega}\bigcup_{T\in\mathcal{A}_{n}}[T])=\emptyset$.
	\end{center}
	Denote by $c_{T}$ the longest $\sigma\in\omega^{<\omega}$ such that $T\forces{c_{T}\subseteq \dot{c}}$. Without loss of generality we may assume that $\mathcal{A}_{n+1}$ refines $\mathcal{A}_{n}$ and that for every $n\in\omega$ and every $T\in\mathcal{A}_{n}$ we have $|stem(T)|,|c_{T}|>n$. Now we define the open sets $U_{n}=\bigcup\{[c_{T}]:T\in\mathcal{A}_{n}\}$. 
	\begin{claim}
		$U_{n}$'s are dense.
	\end{claim}
 \begin{proof}[Proof of the claim]
Take any $\sigma\in2^{<\omega}$. As the set $f^{-1}[[\sigma]]$ is open there must be $T\in\mathcal{A}_{n}$ and $S\leq T$ with $[S]\subseteq f^{-1}[[\sigma]]$. By absoluteness $S\forces{\dot{r}_{gen}\in f^{-1}[[\sigma]]}$ and then $S\forces{\sigma\subseteq f(\dot{r}_{gen})}$ and so $\sigma\subseteq c_{S}$ which finish the claim.
 \end{proof}
To finish the proof of the theorem we show that	$(\bigcup_{\alpha<\kappa}M_{\alpha})\cap(\bigcap_{n\in\omega}U_{n})=\emptyset$. Aiming for a  contradiction assume that there is a $x\in \Cantor$ such that $x\in M_{\alpha}$ and $x\in\bigcap_{n\in\omega}U_{n}$. We want to construct $y\in F(M_{\alpha})$ with $y\in \bigcap_{n\in\omega}\bigcup_{T\in\mathcal{A}_{n}}[T]$ which will contradict $(\bigcup_{\alpha<\kappa}F(M_{\alpha}))\cap(\bigcap_{n\in\omega}\bigcup_{T\in\mathcal{A}_{n}}[T])=\emptyset$. For every $n\in\omega$ as $x\in U_{n}$ we have that there is $T_{n}\in\mathcal{A}_{n}$ with $c_{T_{n}}\subseteq x$. This implies that $\bigcup_{n\in\omega}c_{T_{n}}=x$. Let $y=\bigcup_{n}stem(T_{n})$. Then, observe that $f(y)=x\in M_{\alpha}$ so $y\in f^{-1}[M_{\alpha}]=F(M_{\alpha})$. As $\mathcal{A}_{n+1}$ refines $\mathcal{A}_{n}$ we have $y\in[T_{n}]$ for each $n\in\omega$. 
\end{proof}

According to Theorem \ref{SabZapl} and Proposition \ref{LaverAddsCohen}, the previous estimates apply to the tree types $\MM_{\nwd}$, $\LL_{\nwd}$ as well as $\FM$ (\cite{RosNew}). We will now focus on calculating the additivities.

\begin{theorem}\label{UpperBoundAdd}
For any ideal $\mathcal{I}$, both $\mathrm{add}(\ell^{0}_{\mathcal{B}}(\mathcal{I}))$ and $\mathrm{add}(m^{0}_{\mathcal{B}}(\mathcal{I}))$ are smaller or equal to $\mathrm{add}^*_{\omega}(\mathcal{I})$ .
\end{theorem}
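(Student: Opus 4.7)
Set $\kappa=\mathrm{add}^{*}_{\omega}(\mathcal{I})$ and fix a witnessing family $\{A_{\alpha}:\alpha<\kappa\}\subseteq\mathcal{I}$, i.e.\ one such that for every countable $\{X_{n}:n\in\omega\}\subseteq\mathcal{I}$ there is $\alpha<\kappa$ with $A_{\alpha}\not\subseteq^{*}X_{n}$ for every $n$. For each $\alpha<\kappa$ let $\phi_{\alpha}:\omega^{<\omega}\to\mathcal{I}$ be the \emph{constant} function $\phi_{\alpha}(\sigma)=A_{\alpha}$, so that $L_{\phi_{\alpha}}\in\ell^{0}_{\mathcal{B}}(\mathcal{I})$ and $M_{\phi_{\alpha}}\in m^{0}_{\mathcal{B}}(\mathcal{I})$. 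The plan is to show that neither of the unions $\bigcup_{\alpha<\kappa}L_{\phi_{\alpha}}$ nor $\bigcup_{\alpha<\kappa}M_{\phi_{\alpha}}$ lies in its respective Borel part, which immediately yields both inequalities.

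Assume, aiming at a contradiction, that $\bigcup_{\alpha}L_{\phi_{\alpha}}\in\ell^{0}_{\mathcal{B}}(\mathcal{I})$ (the other case is treated identically using Theorem \ref{SabokZaplDich}). By Theorem \ref{MillerDich} the ideal $\ell^{0}_{\mathcal{B}}(\mathcal{I})$ is $\sigma$-generated by the sets $L_{\psi}$, so there exist $\psi_{n}:\omega^{<\omega}\to\mathcal{I}$, $n\in\omega$, with $\bigcup_{\alpha}L_{\phi_{\alpha}}\subseteq\bigcup_{n}L_{\psi_{n}}$. Consider the countable family
$$
\mathcal{X}=\Bigl\{\bigcup_{n\in F}\psi_{n}(\sigma):F\in[\omega]^{<\omega},\ \sigma\in\omega^{<\omega}\Bigr\}\subseteq\mathcal{I},
$$
which is deliberately closed under the finite unions obtained by fixing $\sigma$. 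Applying the defining property of $\{A_{\alpha}:\alpha<\kappa\}$ to an enumeration of $\mathcal{X}$ produces some $\alpha^{*}<\kappa$ such that $A_{\alpha^{*}}\setminus X$ is infinite for every $X\in\mathcal{X}$.

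Now build a branch $x\in\omega^{\omega}$ recursively: given $x\restriction m$, let
$$
x(m)\in A_{\alpha^{*}}\setminus\bigcup_{n\leq m}\psi_{n}(x\restriction m).
$$
The set $\bigcup_{n\leq m}\psi_{n}(x\restriction m)$ belongs to $\mathcal{X}$, so the choice is possible (in fact the complement relative to $A_{\alpha^{*}}$ is infinite). By construction $x(m)\in A_{\alpha^{*}}$ for every $m$, whence $x\in L_{\phi_{\alpha^{*}}}\cap M_{\phi_{\alpha^{*}}}$. On the other hand, for every fixed $n$ and every $m\geq n$ we have $x(m)\notin\psi_{n}(x\restriction m)$, so $\{m:x(m)\in\psi_{n}(x\restriction m)\}\subseteq\{0,\dots,n-1\}$ is finite; therefore $x\notin L_{\psi_{n}}$, and \emph{a fortiori} $x\notin M_{\psi_{n}}$, for any $n$. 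This contradicts the assumed inclusion. Verbatim, the same $x$ refutes the analogous inclusion $\bigcup_{\alpha}M_{\phi_{\alpha}}\subseteq\bigcup_{n}M_{\psi_{n}}$, establishing the bound for $m^{0}_{\mathcal{B}}(\mathcal{I})$.

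The delicate point, and precisely where the strengthening from $\mathrm{add}^{*}$ to $\mathrm{add}^{*}_{\omega}$ is genuinely used, is that at stage $m$ one must avoid a \emph{finite union} $\bigcup_{n\leq m}\psi_{n}(x\restriction m)$, not just each set $\psi_{n}(x\restriction m)$ individually: the facts $A_{\alpha^{*}}\not\subseteq^{*}B$ and $A_{\alpha^{*}}\not\subseteq^{*}C$ do not imply $A_{\alpha^{*}}\not\subseteq^{*}B\cup C$. This is remedied for free by closing the countable family $\{\psi_{n}(\sigma):n\in\omega,\ \sigma\in\omega^{<\omega}\}$ under finite unions before invoking the witnessing property of $\mathcal{A}$, since the closure is still countable.
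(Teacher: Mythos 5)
Your proof is correct and follows essentially the same route as the paper: constant functions $\phi_{\alpha}(\sigma)=A_{\alpha}$ built from an $\mathrm{add}^{*}_{\omega}(\mathcal{I})$-witness, closing the countable covering family under the relevant finite unions, and recursively constructing a branch $x$ lying in $L_{\phi_{\alpha^*}}$ (indeed in $M_{\phi_{\alpha^*}}$) but in no $L_{\psi_n}$. Your explicit remark about why finite unions must be handled, and the observation that the same $x$ settles the Miller case a fortiori, are points the paper leaves implicit, but the argument is the same.
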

\begin{proof}
Let $\{B_\alpha : \alpha \in \kappa\}$ be a witness for $\mathrm{add}^*_{\omega}(\mathcal{I})$. For each $\alpha \in \kappa$ define $\varphi_\alpha: \omega^{<\omega} \rightarrow \mathcal{I}$ as $\varphi_\alpha (s) = B_\alpha$. We will see that $\bigcup_{\alpha \in \kappa} L_{\varphi_\alpha} \notin \ell^{0}_{\mathcal{B}}(\mathcal{I})$.

Aiming for a contradiction, assume that there is a sequence $\{ \phi_{n}: n\in\omega\}$ such that, for all $n\in\omega, \phi_{n} : \omega^{<\omega} \rightarrow \mathcal{I}$ and $\bigcup_{\alpha \in \kappa} L_{\varphi_\alpha} \subseteq \bigcup_{i \in \omega} L_{\phi_i}$. Then find $\alpha \in \kappa$ such that for all $i \in \omega$ and all $s \in \omega^{<\omega}$, $B_\alpha \setminus \bigcup_{\langle i,s\rangle \in F} \phi_i(s) \neq \emptyset$ for every finite $F \subseteq \omega \times \omega^{<\omega}$. We will show that $L_{\varphi_\alpha} \setminus \bigcup_{i \in \omega} L_{\phi_i} \neq \emptyset$:
Recursively construct $x:\omega \rightarrow \omega$ such that, for all $n \in \omega$
\begin{itemize}
    \item $x(n) \in B_\alpha$,
    \item $x(n) \notin \phi_i(x | k)$ for every $i,k \leq n$.
\end{itemize}
Since $x(n) \in \varphi_\alpha (x | n)$ for all $n\in \omega$, then $x \in L_{\varphi_\alpha}$. On the other hand, for all $i\in \omega, x \notin \phi_i (x | n)$ for all $n > i$, therefore $x \notin L_{\phi_i}$.
\end{proof}

By the previous Theorem and by Proposition \ref{addStarLessThenAdd}, we know that $\min\{\bbb,\mathrm{add}^{*}_{\omega}(\mathcal{I})\}\leq\mathrm{add}(t^{0}_{\mathcal{B}}(\mathcal{I}))\leq\mathrm{add}^*_{\omega}(\mathcal{I})$ hold for both $\ell^{0}_{\mathcal{B}}(\mathcal{I})$ and $m^{0}_{\mathcal{B}}(\mathcal{I})$. Thus we will now calculate the cardinal invariants $\mathrm{add}^{*}_{\omega}$ for some of the Borel ideals, starting with $\FF$.

\begin{proposition}\label{FFAddStarOmega}
    $\mathrm{add}_{\omega}^*(\FF)=\bbb$.
\end{proposition}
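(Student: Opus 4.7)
The plan is to establish the equality by proving both inequalities separately, with the upper bound $\mathrm{add}_{\omega}^*(\FF) \leq \bbb$ coming from a direct "graph-of-function" construction, and the lower bound $\bbb \leq \mathrm{add}_{\omega}^*(\FF)$ by showing that, below $\bbb$, one can use a single dominating function together with countably many "finite-column" patches.

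For the upper bound, I would start with an unbounded family $\{f_\alpha : \alpha < \bbb\} \subseteq \omega^\omega$ and set $A_\alpha = \{(n,m) \in \omega \times \omega : m \leq f_\alpha(n)\}$, which sits in $\FF$ because every vertical section is finite. To show that $\{A_\alpha : \alpha < \bbb\}$ witnesses $\mathrm{add}_{\omega}^*(\FF)$, I fix a countable family $\{X_n : n \in \omega\} \subseteq \FF$, associate to each $X_n$ the function $g_n(k) = \max X_n^{(k)}$ (defined on the cofinite set where the section is finite), pick a single $g$ that dominates every $g_n$, and use the unboundedness of $\{f_\alpha\}$ to obtain $\alpha$ with $f_\alpha(k) > g(k)$ infinitely often. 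For such $k$ outside the finite exceptional column-set of each $X_n$, the point $(k, f_\alpha(k))$ lies in $A_\alpha \setminus X_n$, so $A_\alpha \not\subseteq^* X_n$ for every $n$ simultaneously.

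For the lower bound, I would take a family $\{B_\alpha : \alpha < \kappa\} \subseteq \FF$ with $\kappa < \bbb$ and build a countable family $\{X_n\} \subseteq \FF$ such that every $B_\alpha$ is $\subseteq^*$-contained in some $X_n$. To each $B_\alpha$ I attach the finite set $E_\alpha = \{k : B_\alpha^{(k)} \text{ is infinite}\}$ and the function $h_\alpha(k) = \max B_\alpha^{(k)} + 1$ for $k \notin E_\alpha$ (with value $0$ otherwise). Since $\kappa < \bbb$, the family $\{h_\alpha\}$ is bounded by a single $h$. For each finite $F \subseteq \omega$, I define
$$
X_F = \{(k,m) : m \leq h(k)\} \cup (F \times \omega),
$$
which lies in $\FF$ because only finitely many columns have infinite section. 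Choosing $F = E_\alpha$ yields $B_\alpha \subseteq^* X_{E_\alpha}$: points in $B_\alpha$ on columns in $E_\alpha$ are absorbed by $F \times \omega$, while on columns outside $E_\alpha$ we have $B_\alpha^{(k)} \subseteq [0, h_\alpha(k)] \subseteq [0, h(k)]$ for almost all $k$, and the finitely many bad columns contribute only finitely many points since those sections are finite. Since $[\omega]^{<\omega}$ is countable, $\{X_F : F \in [\omega]^{<\omega}\}$ is a countable family witnessing that $\kappa$ cannot realize $\mathrm{add}_{\omega}^*(\FF)$.

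The main subtle point throughout both directions is that elements of $\FF$ may have finitely many \emph{infinite} vertical sections, so one cannot associate a single real to each $B_\alpha$ in a way that controls all of $B_\alpha$ modulo finite. This is precisely why $\mathrm{add}_{\omega}^*$ and not $\mathrm{add}^*$ is the right quantity: the flexibility of having countably many $X_n$'s lets us parametrize over the possible finite exceptional column-sets in the lower bound, while in the upper bound the finitely many exceptional columns of each individual $X_n$ contribute only a finite error that is irrelevant to the $\subseteq^*$ conclusion.
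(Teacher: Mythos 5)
Your argument is correct and is essentially the paper's own (one-sentence) proof written out in full: the paper asserts exactly that an unbounded family, viewed as below-graphs of functions in $\FF$, witnesses $\mathrm{add}_{\omega}^*(\FF)\leq\bbb$, and that fewer than $\bbb$ sets from $\FF$ cannot be a witness, which is precisely what your two directions establish. Your explicit treatment of the finitely many infinite columns via the sets $X_F$ is just the detail the paper leaves implicit.
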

\begin{proof}
    Notice that an unbounded family of functions witnesses $\mathrm{add}_{\omega}^*(\FF)$ and less than $\bbb$ sets from $\FF$ cannot, therefore $\mathrm{add}_{\omega}^*(\FF)=\bbb$.
\end{proof}

\begin{proposition}
    $\mathrm{add}_{\omega}^{*}(\ED)=\mathrm{add}_{\omega}^{*}(\ED_{fin})=\omega_{1}$.
\end{proposition}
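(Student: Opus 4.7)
The strategy is to prove both cardinals equal $\omega_1$ by establishing the lower and upper bounds separately. The lower bound $\mathrm{add}_\omega^*(\mathcal{I}) \geq \omega_1$ holds for every ideal on a countable set: a countable candidate family $\mathcal{A} = \{A_n : n \in \omega\}$ is always trivially covered by itself via the choice $X_n = A_n$, since $A_n \subseteq X_n$. Hence the content of the proposition lies in producing an $\omega_1$-sized witnessing family.

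For the upper bound, I would first construct by transfinite recursion an eventually different family $\{f_\alpha : \alpha < \omega_1\} \subseteq \omega^\omega$ with the extra constraint $f_\alpha(n) \leq n$ for every $n$. At stage $\alpha < \omega_1$ (so $\alpha$ is countable) enumerate the previously chosen $\{f_\beta : \beta < \alpha\}$ as $\{g_k : k \in \omega\}$ (padding with any trivial function if $\alpha$ is finite) and pick
$$f_\alpha(n) \in \{0, 1, \ldots, n\} \setminus \{g_0(n), g_1(n), \ldots, g_{n-1}(n)\},$$
which is nonempty because the first set has $n+1$ elements while the second has at most $n$. For each $k$, the choice forces $f_\alpha(n) \neq g_k(n)$ for all $n > k$, so $f_\alpha$ is eventually different from every previous function. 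Setting $A_\alpha = \{(n, f_\alpha(n)) : n \in \omega\}$, we obtain $A_\alpha \in \ED_{fin} \subseteq \ED$ because every column has exactly one element, so the uniform bound $N=1$ witnesses membership.

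The proof is then completed by showing that $\{A_\alpha : \alpha < \omega_1\}$ is simultaneously a witness for both ideals. Let $\{X_k : k \in \omega\}$ be a countable family in $\ED$ (or in $\ED_{fin}$) and let $N_k$ witness $X_k \in \ED$, so that $|(X_k)_n| \leq N_k$ for all $n > N_k$. Since each column of $A_\alpha$ is the singleton $\{f_\alpha(n)\}$, we have $A_\alpha \subseteq^* X_k$ iff $f_\alpha(n) \in (X_k)_n$ for almost every $n$. If $N_k + 1$ distinct indices $\alpha$ were to satisfy this for a fixed $k$, then by the eventually different property, for every sufficiently large $n$ we would find $N_k + 1$ pairwise distinct values lying in $(X_k)_n$, contradicting $|(X_k)_n| \leq N_k$. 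Hence each $X_k$ absorbs at most $N_k$ members of $\{A_\alpha\}$, so the whole countable family absorbs at most $\sum_k N_k \leq \aleph_0$ of them. Since the family has size $\omega_1$, some $A_\alpha$ is not almost contained in any $X_k$.

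There is no real obstacle here; the key insight (and the only place where the argument is specific to $\ED$ and $\ED_{fin}$ rather than $\FF$) is that their defining condition provides a \emph{uniform} bound on column sizes. This uniformity forces pairwise eventually different graphs to collide in any fixed column of a set from the ideal, which in turn makes $\omega_1$ eventually different functions already sufficient; the analogous bookkeeping for $\FF$ only gives $\bbb$, as in Proposition \ref{FFAddStarOmega}.
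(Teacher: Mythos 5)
Your proof is correct and follows essentially the same route as the paper: an eventually different family of size $\omega_{1}$, viewed as graphs inside the ideal, serves as the witness, and a pigeonhole argument using the uniform bound on column sizes shows that each set from $\ED$ (or $\ED_{fin}$) can almost-contain only finitely many of these graphs, so a countable family absorbs only countably many of them. The only cosmetic difference is that you build the family explicitly with the constraint $f_{\alpha}(n)\leq n$ so that the same witness works verbatim for $\ED_{fin}$, whereas the paper fixes an arbitrary eventually different family and treats the $\ED_{fin}$ case as identical.
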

\begin{proof}
We will focus only on $\ED$ as the proof for $\ED_{fin}$ is identical. Let $[N,F]=:\{(n,m):n<N $ or $m\in\{f(n):f\in F\}\}$ for $N\in\omega$ and $F\in[\baire]^{N}$. Clearly any $A\in \ED$ can be covered by the set of this form. Fix an eventually different family $\{e_{\alpha}:\alpha<\omega_{1}\}\subseteq\omega^{\omega}$. We claim that $\{e_{\alpha}:\alpha<\omega_{1}\}$ is a witness for $add_{\omega}^{*}(\ED)$. To show this, assume that we are given $\{A_{n}:n\in\omega\}\subseteq\ED$ where $A_{n}=[N_{n},F_{n}]\in\ED$ where $N_{n}\in\omega$, $F_{n}\in[\baire]^{<\omega}$. Observe that for each
$f\in\omega^{\omega}$ the set $\{\alpha\in\omega_{1}: e_{\alpha}\text{ is almost contained in }\bigcup F_{n}\}$ is of cardinality at most $|F_{n}|$. Choose $\alpha_{0}\in\omega_{1}\setminus\bigcup_{n}\{\alpha: e_{\alpha}\text{ is almost contained in }\bigcup F_{n}\}$. Clearly $e_{\alpha_{0}}$ is as desired.
\end{proof}

\begin{proposition}
    $\mathrm{add}_{\omega}^{*}(\Solecki)=\omega_{1}$.
\end{proposition}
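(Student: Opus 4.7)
The bound $\omega_{1}\leq \mathrm{add}^{*}_{\omega}(\Solecki)$ is automatic: any countable $\mathcal{A}=\{A_{n}:n\in\omega\}\subseteq\Solecki$ fails to witness the invariant, since setting $X_{n}=A_{n}$ gives $A_{n}\subseteq^{*}X_{n}$ for every $n$. My plan for the upper bound is therefore to produce an explicit witnessing family of size $\aleph_{1}$. Fix any set $\{x_{\alpha}:\alpha<\omega_{1}\}\subseteq 2^{\omega}$ of pairwise distinct points and, for each $\alpha<\omega_{1}$, set
$$A_{\alpha}=\{C\in\Omega: x_{\alpha}\in C\}.$$
Each $A_{\alpha}$ is a generator of $\Solecki$ (corresponding to $F=\{x_{\alpha}\}$), and I will claim that $\{A_{\alpha}:\alpha<\omega_{1}\}$ is the desired witness.

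Given any $\{X_{n}:n\in\omega\}\subseteq\Solecki$, since every element of $\Solecki$ is contained in some generator, I would pick finite sets $F_{n}\subseteq 2^{\omega}$ with $X_{n}\subseteq\{C\in\Omega:C\cap F_{n}\neq\emptyset\}$. As $\bigcup_{n}F_{n}$ is countable while $\{x_{\alpha}:\alpha<\omega_{1}\}$ has cardinality $\aleph_{1}$, there exists $\alpha<\omega_{1}$ with $x_{\alpha}\notin\bigcup_{n}F_{n}$. The plan is then to verify, for this $\alpha$, that $A_{\alpha}\not\subseteq^{*}X_{n}$ for every $n$. This reduces to the combinatorial claim: whenever $F\subseteq 2^{\omega}$ is finite and $x\notin F$, the set
$$\{C\in\Omega: x\in C\text{ and }C\cap F=\emptyset\}$$
is infinite.

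To establish the claim, pick $m$ large enough that the basic clopen cylinder $[x|_{m}]$ is disjoint from $F$ (possible because $F$ is finite and $x\notin F$). Then any union of the form $[x|_{m}]\cup[\tau_{1}]\cup\ldots\cup[\tau_{2^{m-1}-1}]$, where the $\tau_{i}\in 2^{m}$ are pairwise distinct length-$m$ words different from $x|_{m}$ such that each $[\tau_{i}]$ is disjoint from $F$, is a clopen set of measure exactly $1/2$ containing $x$ and disjoint from $F$. At least $2^{m}-1-|F|$ such $\tau$'s are available, so the number of valid configurations is at least $\binom{2^{m}-1-|F|}{2^{m-1}-1}$, which grows without bound as $m\to\infty$, yielding infinitely many distinct clopen sets in the displayed set. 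I do not anticipate any serious obstacle beyond this finitistic counting; the argument structurally mirrors the $\ED$ proof just above, with the uncountable eventually different family of functions replaced by an uncountable family of distinct points of $2^{\omega}$, exploiting that $\Omega$ is rich enough to produce many clopen sets of measure $1/2$ through any prescribed point while avoiding any prescribed finite set.
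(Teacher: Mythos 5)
Your proposal is correct and follows essentially the same route as the paper: the same witnessing family $\{\,\{C\in\Omega:x_{\alpha}\in C\}:\alpha<\omega_{1}\}$, covering the given $X_{n}$ by generators $[F_{n}]$ and choosing $x_{\alpha}\notin\bigcup_{n}F_{n}$. The only difference is that you spell out the finitistic counting (which the paper dismisses as "clearly") showing that infinitely many measure-$1/2$ clopen sets contain $x_{\alpha}$ and avoid each finite $F_{n}$, and that detail is verified correctly.
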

\begin{proof}
Let $\{x_{\alpha}:\alpha<\omega_{1}\}\subseteq 2^{\omega}$. Define $[F]=\{C\in\Omega: C\cap F\neq\emptyset\}$ where $F\in[2^{\omega}]^{<\omega}$. We claim that the collection $\{[\{x_{\alpha}\}]:\alpha<\omega_{1}\}$ is a  witness for $\mathrm{add}_{\omega}^{*}(\Solecki)$. To show this assume that we are given $\{A_{n}:n\in\omega\}\subseteq\Solecki$ where $A_{n}=[F_{n}]$ and $F_{n}\in[\Cantor]^{<\omega}$. Enumerate $\bigcup_{n}F_{n}$ as $\{y_{n}:n\in\omega\}$ and pick $x_{\alpha}\notin\{y_{n}:n\in\omega\}$. Clearly, for each $n\in\omega$, there are infinitely many clopen sets $C$ of measure $1/2$ which contain $x_{\alpha}$ but not $y_{n}$. This finishes the proof.
\end{proof}

In the case of the ideals $conv$ and $\RandomGraph$ we are able to get a stronger result. For that purpose, we introduce the $\omega$-version of $\mathrm{non}^*(\mathcal{I})$:
$$
\mathrm{non}_{\omega}^*(\mathcal{I}) = \min\{ |\mathcal{A}| : \mathcal{A} \subseteq [\omega]^{\omega} \text{ and } \forall \{X_{n}:n\in\omega\} \subseteq \mathcal{I} \exists A \in \mathcal{A} \forall n\in\omega(A \cap X_{n}=^*\emptyset) \}.
$$

Clearly, $\mathrm{add}_{\omega}^*(\mathcal{I})\leq \mathrm{non}_{\omega}^*(\mathcal{I})$. The reader can easily verify the following proposition.

\begin{proposition}\label{NonKatetov}
    The Katetov reducibility $\mathcal{I}_{0}\leq_{K}\mathcal{I}_{1}$ implies $\mathrm{non}_{\omega}^*(\mathcal{I}_{0})\leq \mathrm{non}_{\omega}^*(\mathcal{I}_{1})$.
\end{proposition}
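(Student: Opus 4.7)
The plan is to push forward a witness for $\mathrm{non}_\omega^*(\mathcal{I}_1)$ through the Katetov map. Fix $\pi:\omega\rightarrow\omega$ witnessing $\mathcal{I}_0 \leq_K \mathcal{I}_1$, and let $\mathcal{A}_1 \subseteq [\omega]^\omega$ be a family of minimal cardinality witnessing $\mathrm{non}_\omega^*(\mathcal{I}_1)$. I would propose $\mathcal{A}_0 := \{\pi[A] : A \in \mathcal{A}_1\} \cap [\omega]^\omega$ as a candidate witness for $\mathrm{non}_\omega^*(\mathcal{I}_0)$, whose cardinality is automatically at most $|\mathcal{A}_1|$.

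Given a countable family $\{X_n : n \in \omega\} \subseteq \mathcal{I}_0$, the first move is to pull back to $\{\pi^{-1}[X_n] : n \in \omega\} \subseteq \mathcal{I}_1$, apply the defining property of $\mathcal{A}_1$ to obtain some $A \in \mathcal{A}_1$ with $A \cap \pi^{-1}[X_n] =^* \emptyset$ for every $n$, and then invoke the elementary identity $\pi[A] \cap X_n = \pi[A \cap \pi^{-1}[X_n]]$ to conclude that $\pi[A] \cap X_n$ is finite for every $n$.

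The main obstacle is ensuring $\pi[A] \in [\omega]^\omega$: this could fail if $\pi$ has infinite fibers and $A$ happens to sit inside finitely many of them. To bypass this, I would enlarge the pulled-back family to
\[
\{\pi^{-1}[X_n] : n \in \omega\} \cup \{\pi^{-1}(k) : k \in \omega\},
\]
which is still countable and still contained in $\mathcal{I}_1$, because $\Fin \subseteq \mathcal{I}_0$ forces $\pi^{-1}(k) = \pi^{-1}[\{k\}] \in \mathcal{I}_1$ for every $k$. The witness $A \in \mathcal{A}_1$ produced for this enlarged family then has finite intersection with every fiber of $\pi$, so $\pi|_A$ is finite-to-one, and hence $\pi[A]$ is infinite. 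Combining with the previous paragraph, $\pi[A] \in \mathcal{A}_0$ witnesses the family $\{X_n\}_{n<\omega}$, which yields $\mathrm{non}_\omega^*(\mathcal{I}_0) \leq |\mathcal{A}_1| = \mathrm{non}_\omega^*(\mathcal{I}_1)$.
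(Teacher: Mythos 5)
Your proof is correct. The paper gives no argument for this proposition (it is left as an easy verification), and your push-forward of a witness along the Katetov map $\pi$ is exactly the intended standard argument; the only non-trivial point is guaranteeing $\pi[A]\in[\omega]^{\omega}$, and your trick of enlarging the countable family by the fibers $\pi^{-1}(k)$ (legitimate since $\Fin\subseteq\mathcal{I}_0$) handles it cleanly.
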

We have the following
\begin{proposition}
    $\mathrm{non}_{\omega}^{*}(conv)=\mathrm{non}_{\omega}^{*}(\RandomGraph)=\omega_{1}$.
\end{proposition}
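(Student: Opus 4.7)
The plan is to establish the upper bound $\mathrm{non}^{*}_{\omega}(conv)\leq\omega_{1}$ by a direct construction and then transport it to $\RandomGraph$ via Katetov reducibility; the matching lower bound for both ideals follows from tallness. Both $conv$ and $\RandomGraph$ are tall (by Bolzano-Weierstrass and by Ramsey's theorem, respectively), and from tallness alone one sees that $\mathrm{non}^{*}_{\omega}(\mathcal{I})\geq\omega_{1}$: if a family $\mathcal{A}=\{A_{n}:n\in\omega\}\subseteq[dom(\mathcal{I})]^{\omega}$ were countable, I could pick infinite $X_{n}\in\mathcal{I}$ with $X_{n}\subseteq A_{n}$, and the family $\{X_{n}:n\in\omega\}\subseteq\mathcal{I}$ would defeat $\mathcal{A}$ since $A_{n}\cap X_{n}=X_{n}$ is infinite.

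For the upper bound on $conv$, I would fix $\{r_{\alpha}:\alpha<\omega_{1}\}\subseteq[0,1]$, a set of pairwise distinct irrationals, and for each $\alpha$ pick an injective sequence $A_{\alpha}\subseteq\mathbb{Q}\cap[0,1]$ converging to $r_{\alpha}$ in $[0,1]$. The claim is that $\{A_{\alpha}:\alpha<\omega_{1}\}$ witnesses $\mathrm{non}^{*}_{\omega}(conv)\leq\omega_{1}$. Given any $\{Y_{n}:n\in\omega\}\subseteq conv$, each $Y_{n}$ is contained in a finite union of convergent sequences and therefore has only finitely many accumulation points in $[0,1]$; the set $L=\bigcup_{n\in\omega}\mathrm{Lim}(Y_{n})$ is countable, so I may choose $\alpha$ with $r_{\alpha}\notin L$. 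For each fixed $n$, $r_{\alpha}$ has positive distance to the finite set $\mathrm{Lim}(Y_{n})$, so $A_{\alpha}$ is eventually contained in an open set disjoint from an open neighborhood capturing all but finitely many points of $Y_{n}$; hence $A_{\alpha}\cap Y_{n}$ is finite, as required.

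For $\RandomGraph$, the Katetov reducibility $\RandomGraph\leq_{K}conv$ visible in the diagram, together with Proposition \ref{NonKatetov}, yields directly $\mathrm{non}^{*}_{\omega}(\RandomGraph)\leq\mathrm{non}^{*}_{\omega}(conv)\leq\omega_{1}$, so no separate combinatorial argument inside the random graph is needed. The only genuine subtlety in the whole proof is the observation that countably many elements of $conv$ can accumulate at only countably many reals, leaving $\omega_{1}$ irrational limits available for diagonalization; once this is seen, the rest is a routine separation-of-limits argument and one invocation of the transfer principle along the Katetov morphism.
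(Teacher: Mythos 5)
Your proof is correct and follows essentially the same route as the paper: a family of $\omega_1$ rational sequences converging to $\omega_1$ distinct limit points witnesses $\mathrm{non}^*_{\omega}(conv)\leq\omega_1$ (since countably many members of $conv$ have only countably many accumulation points), and the bound transfers to $\RandomGraph$ via $\RandomGraph\leq_K conv$ and Proposition \ref{NonKatetov}. Your explicit lower-bound argument from tallness is a detail the paper leaves implicit, but it is the obvious one and adds nothing problematic.
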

\begin{proof}
One can show that $\RandomGraph\leq_{K} conv$ (see \cite{HrushKatetov}), so by Proposition \ref{NonKatetov} it is enough to consider $\mathrm{non}_{\omega}^{*}(conv)$. Let $\{x_{\alpha}:\alpha<\omega_{1}\}\subseteq 2^{\omega}$, for each $\alpha<\omega_{1}$ choose $A_{\alpha}\subseteq \mathbb{Q}$ to be a sequence converging to $x_{\alpha}$. We claim that $\{A_{\alpha}:\alpha\in\omega_{1}\}\subseteq conv$  witnesses $\mathrm{non}_{\omega}^{*}(conv)$. If $\{B_{n}:n\in\omega\}\subseteq conv$ is given, choose $\alpha\in\omega_{1}$ such that $x_{\alpha}$ is not a convergence point of any $B_{n}$. Clearly then $A_{\alpha}\cap B_{n}$ is finite.
\end{proof}

As a corollary of the previous propositions we get the following.

\begin{corollary}\label{AddEqualOmegaOne}
    For the ideals $\ED, \ED_{fin}, \Solecki, conv$ and $\RandomGraph$ the additivities of the Borel parts $m^{0}_{\mathcal{B}}(\mathcal{I})$ and $\ell^{0}_{\mathcal{B}}(\mathcal{I})$ are equal to $\omega_{1}$.
\end{corollary}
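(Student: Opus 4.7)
The plan is to bundle together the previous propositions, since the corollary is essentially a bookkeeping observation: it says that every upper bound we have just established for $\mathrm{add}^*_{\omega}(\mathcal{I})$ transfers to the additivities of the Borel parts, and these additivities can never drop below $\omega_{1}$.

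First, I would invoke Theorem \ref{UpperBoundAdd} to get the uniform upper bound
\[
\mathrm{add}(\ell^{0}_{\mathcal{B}}(\mathcal{I})),\ \mathrm{add}(m^{0}_{\mathcal{B}}(\mathcal{I}))\ \leq\ \mathrm{add}^{*}_{\omega}(\mathcal{I}).
\]
For $\mathcal{I}\in\{\ED,\ED_{fin},\Solecki\}$ the previous propositions give $\mathrm{add}^{*}_{\omega}(\mathcal{I})=\omega_{1}$ directly, so the upper bound $\omega_{1}$ is immediate. For $\mathcal{I}\in\{conv,\RandomGraph\}$ we only established $\mathrm{non}^{*}_{\omega}(\mathcal{I})=\omega_{1}$, but since any witness for $\mathrm{add}^{*}_{\omega}(\mathcal{I})$ is in particular a family of infinite sets which is not simultaneously almost contained in countably many elements of $\mathcal{I}$, one has the trivial inequality $\mathrm{add}^{*}_{\omega}(\mathcal{I})\leq \mathrm{non}^{*}_{\omega}(\mathcal{I})$, and hence again $\mathrm{add}^{*}_{\omega}(\mathcal{I})\leq\omega_{1}$.

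For the matching lower bound, I would simply recall that, by the dichotomy theorems \ref{SabokZaplDich} and \ref{MillerDich} (as noted right after Proposition \ref{aresigmaideals}), both $m^{0}_{\mathcal{B}}(\mathcal{I})$ and $\ell^{0}_{\mathcal{B}}(\mathcal{I})$ are genuine $\sigma$-ideals, so their additivity is automatically $\geq\omega_{1}$. Combining the two bounds yields equality.

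There is no real obstacle here; the only point that deserves a line of justification is the inequality $\mathrm{add}^{*}_{\omega}(\mathcal{I})\leq\mathrm{non}^{*}_{\omega}(\mathcal{I})$, used for $conv$ and $\RandomGraph$. I would record it as a short remark before starting the proof of the corollary, noting that any family witnessing $\mathrm{add}^{*}_{\omega}$ must in particular be a family of infinite positive sets that cannot be covered (mod finite) by countably many sets in $\mathcal{I}$, hence also witnesses $\mathrm{non}^{*}_{\omega}$.
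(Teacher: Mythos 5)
Your overall assembly is exactly what the paper intends: the upper bound comes from Theorem \ref{UpperBoundAdd} together with the computed values $\mathrm{add}^*_{\omega}(\ED)=\mathrm{add}^*_{\omega}(\ED_{fin})=\mathrm{add}^*_{\omega}(\Solecki)=\omega_1$ and $\mathrm{non}^*_{\omega}(conv)=\mathrm{non}^*_{\omega}(\RandomGraph)=\omega_1$, and the lower bound $\omega_1$ is automatic because $m^0_{\mathcal{B}}(\mathcal{I})$ and $\ell^0_{\mathcal{B}}(\mathcal{I})$ are (proper) $\sigma$-ideals. That is the same route as the paper, which states the corollary without proof as a consequence of these propositions.

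However, the one step you single out for justification, $\mathrm{add}^*_{\omega}(\mathcal{I})\leq\mathrm{non}^*_{\omega}(\mathcal{I})$, is justified incorrectly, in two ways. First, the logic is reversed: arguing that every witness for $\mathrm{add}^*_{\omega}$ is also a witness for $\mathrm{non}^*_{\omega}$ would prove $\mathrm{non}^*_{\omega}(\mathcal{I})\leq\mathrm{add}^*_{\omega}(\mathcal{I})$, which is the opposite of what you need; to get $\mathrm{add}^*_{\omega}\leq\mathrm{non}^*_{\omega}$ you must start from a witness for $\mathrm{non}^*_{\omega}$ and manufacture an $\mathrm{add}^*_{\omega}$-witness of the same size. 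Second, the claimed implication is false as stated: a witness for $\mathrm{add}^*_{\omega}$ is by definition a family of sets \emph{in} $\mathcal{I}$ (not positive sets), and $A\nsubseteq^* X_n$ does not give $A\cap X_n=^*\emptyset$, so such a family need not witness $\mathrm{non}^*_{\omega}$ at all. The correct quick argument (which is presumably what the paper's ``clearly'' hides) uses tallness: both $conv$ and $\RandomGraph$ are tall (every infinite set of rationals contains a convergent subsequence; by Ramsey's theorem every infinite set of vertices contains an infinite clique or anticlique), so given a $\mathrm{non}^*_{\omega}$-witness $\mathcal{A}$ one may shrink each $A\in\mathcal{A}$ to an infinite $A'\subseteq A$ with $A'\in\mathcal{I}$; then $A'\cap X_n=^*\emptyset$ together with $A'$ infinite yields $A'\nsubseteq^* X_n$, so the shrunken family witnesses $\mathrm{add}^*_{\omega}(\mathcal{I})\leq|\mathcal{A}|$. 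Even more directly for $conv$: the witnesses built in the paper's proposition are convergent sequences, hence already members of $conv$ that are almost disjoint from any prescribed countable subfamily, so they witness $\mathrm{add}^*_{\omega}(conv)=\omega_1$ outright. With this repair (or by simply citing the inequality as stated in the paper after the definition of $\mathrm{non}^*_{\omega}$), your proof is complete.
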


Moving towards $\nwd$, recall that $\mmm_{\sigma-\text{centered}}$ is the Martin's number of $\sigma$-centered partial orderings: $\mmm_{\sigma-\text{centered}}$ is the smallest cardinal number $\kappa$ such that there is a $\sigma-\text{centered}$ partial order $\mathbb{P}$ and a collection of $\kappa$ many open dense subsets of $\mathbb{P}$ such that no filter in $\mathbb{P}$ intersects them all. 

\begin{theorem}\label{nwdAddStarOmega}
    $\mmm_{\sigma-\text{centered}}\leq\mathrm{add}_{\omega}^*(\nwd)\leq\mathrm{add}(\Meager)$.
\end{theorem}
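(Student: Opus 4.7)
\textbf{Upper bound $\mathrm{add}_{\omega}^*(\nwd)\leq\mathrm{add}(\Meager)$.} I would start from a family $\{N_\alpha:\alpha<\mathrm{add}(\Meager)\}$ of closed nowhere dense subsets of $2^\omega$ whose union is non-meager and convert each one into the tree $T_\alpha=\{\sigma\in 2^{<\omega}:[\sigma]\cap N_\alpha\neq\emptyset\}\in\nwd$. Suppose toward contradiction that some countable $\{X_n:n\in\omega\}\subseteq\nwd$ satisfies $T_\alpha\subseteq^* X_{n(\alpha)}$ for every $\alpha$. After enlarging each $X_n$ to a tree in $\nwd$ with nowhere dense body (possible by the remark after the definition of $\nwd$), the inclusion $T_\alpha\subseteq^* X_{n(\alpha)}$ propagates to $[T_\alpha]\subseteq[X_{n(\alpha)}]$ because $X_{n(\alpha)}$ is closed under initial segments; hence $N_\alpha\subseteq\bigcup_n[X_n]$, placing the non-meager union $\bigcup_\alpha N_\alpha$ inside a meager set, a contradiction.

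\textbf{Lower bound $\mmm_{\sigma-\text{centered}}\leq\mathrm{add}_{\omega}^*(\nwd)$: the forcing.} Fix $\kappa<\mmm_{\sigma-\text{centered}}$ and $\{A_\alpha:\alpha<\kappa\}\subseteq\nwd$; the goal is to produce countably many $B_n\in\nwd$ such that every $A_\alpha$ is $\subseteq^* B_n$ for some $n$. Equivalently I want dense sets $D_n=2^{<\omega}\setminus B_n$ that are almost disjoint from each $A_\alpha$ assigned to color $n$. Define a forcing $\mathbb{P}$ whose conditions are tuples $p=(N^p,(D^p_n)_{n<N^p},F^p,\varphi^p)$ with $N^p\in\omega$, each $D^p_n\in[2^{<\omega}]^{<\omega}$, $F^p\in[\kappa]^{<\omega}$, and $\varphi^p\subseteq F^p\times N^p$ a set of commitments: $(\alpha,n)\in\varphi^p$ promises that no future element of $A_\alpha$ will be added to $D_n$. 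The ordering is $q\leq p$ iff $N^q\geq N^p$, $D^q_n\supseteq D^p_n$, $F^q\supseteq F^p$, $\varphi^q\supseteq\varphi^p$, and $(D^q_n\setminus D^p_n)\cap A_\alpha=\emptyset$ for every $(\alpha,n)\in\varphi^p$.

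\textbf{Centeredness, density, and extraction.} Partition $\mathbb{P}$ by the countable invariant $(N^p,(D^p_n)_{n<N^p})$. Within any such piece, finitely many conditions $p_1,\ldots,p_m$ admit the common lower bound obtained by keeping the finite data and taking $F^q=\bigcup_i F^{p_i}$, $\varphi^q=\bigcup_i\varphi^{p_i}$; this is a valid condition because $\varphi^p$ is declared a relation and not a function, so two conditions committing a single $\alpha$ to distinct colors cause no clash. This point is the main subtlety: the naive variant where $\varphi^p$ is a function from $F^p$ to $N^p$ fails to be $\sigma$-centered precisely because of such conflicts. The relevant dense sets are $\mathcal{D}_\alpha=\{p:(\alpha,n)\in\varphi^p\text{ for some }n\}$ for $\alpha<\kappa$ (dense by opening a fresh color $n=N^p$ with $D_{N^p}=\emptyset$ and inserting $(\alpha,N^p)$ into $\varphi$) and $\mathcal{E}_{\sigma,n}=\{p:n<N^p\text{ and }D^p_n\cap\{\tau:\sigma\subseteq\tau\}\neq\emptyset\}$ for $\sigma\in 2^{<\omega}$ and $n\in\omega$ (dense because the set $\bigcup\{A_\alpha:(\alpha,n)\in\varphi^p\}$ is a finite union of nowhere dense sets, so one can pick $\tau\supseteq\sigma$ outside it and add it to $D^p_n$).

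Applying Martin's axiom for $\sigma$-centered forcings to the $\kappa$ many dense sets above produces a filter $G$ meeting them all. Setting $D_n=\bigcup_{p\in G}D^p_n$, the density $\mathcal{E}_{\sigma,n}$ ensures that $D_n$ is dense in $2^{<\omega}$, so that $B_n=2^{<\omega}\setminus D_n\in\nwd$; and for each $\alpha$ the first $p_0\in G$ with $(\alpha,n_\alpha)\in\varphi^{p_0}$ together with the commitment clause forces $D_{n_\alpha}\cap A_\alpha=D^{p_0}_{n_\alpha}\cap A_\alpha$, which is finite. Hence $A_\alpha\subseteq^* B_{n_\alpha}$, finishing the proof.
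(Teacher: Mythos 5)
Your upper bound is correct and is essentially the paper's argument, with the routine verification (that $T_\alpha\subseteq^* X_{n(\alpha)}$ propagates to $[T_\alpha]\subseteq[X_{n(\alpha)}]$ after replacing $X_{n(\alpha)}$ by a covering nowhere dense tree) spelled out. Your lower bound follows the same overall strategy as the paper ($\sigma$-centered poset plus $\mmm_{\sigma-\text{centered}}$), but there is a genuine gap at the final step, namely the claim that $B_n=2^{<\omega}\setminus D_n\in\nwd$ \emph{because $D_n$ is dense}. For $\nwd$ as it is actually used in the paper --- nowhere dense subsets of $2^{<\omega}$, as pinned down by the remark that every member is covered by a tree whose body is nowhere dense in $2^\omega$ --- membership of $B_n$ is not equivalent to density of its complement: it requires that $2^{<\omega}\setminus B_n$ contain a \emph{dense open} set, i.e.\ that below every $\sigma$ there is a $\tau$ \emph{all} of whose extensions avoid $B_n$. (The set of nodes of even length has dense complement but is not in $\nwd$; if it were, closure under finite unions would put all of $2^{<\omega}$ in $\nwd$.) Your dense sets $\mathcal{E}_{\sigma,n}$ only insert single nodes above each $\sigma$ into $D_n$, so nothing ever places a full cone inside $D_n$, and $B_n$ will in general meet every cone, hence fail to be in $\nwd$. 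Note that you cannot retreat to the literal ``complement is dense'' reading of the definition to rescue this: your own density argument for $\mathcal{E}_{\sigma,n}$ (choosing $\tau\supseteq\sigma$ outside a finite union of the $A_\alpha$'s) already uses the genuinely nowhere dense reading, under which finite unions remain small.

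The repair stays inside your framework but changes what the promises protect. Let a commitment $(\alpha,n)$ forbid adding to $D_n$ any node $\tau$ whose whole cone $\{\rho:\tau\subseteq\rho\}$ meets $A_\alpha$; the density of $\mathcal{E}_{\sigma,n}$ survives precisely because a finite union of the committed $A_\alpha$'s is nowhere dense, so some $\tau\supseteq\sigma$ has its cone disjoint from it. Arrange (via your fresh-color trick, or by building the cone-disjointness of the already present nodes into $\mathcal{D}_\alpha$) that when the commitment is created no earlier node of $D_n$ has its cone meeting $A_\alpha$, and finally set $B_n=2^{<\omega}\setminus\bigcup_{\tau\in D_n}\{\rho:\tau\subseteq\rho\}$. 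Then $\mathcal{E}_{\sigma,n}$ gives $B_n\in\nwd$, and the commitment argument now yields $A_\alpha\subseteq B_{n_\alpha}$ outright. This is in effect what the paper's poset does automatically: its conditions carry growing members of $\nwd$ (which one may take to be trees) frozen below a level, so the generic covering sets are downward closed, and for downward closed sets ``complement dense'' and ``nowhere dense'' coincide.
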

\begin{proof}
To prove the second inequality simply notice that if $\{N_{\alpha}:\alpha<\kappa\}$ is a collection of nowhere dense subsets of $2^{\omega}$ such that $\bigcup_{\alpha}N_{\alpha}$ is not meager then the nowhere dense trees $\{T_{\alpha}:\alpha<\kappa\}\subseteq\nwd$, $[T_{\alpha}]=N_{\alpha}$ witness for $\mathrm{add}_{\omega}^*(\nwd)$.

To prove the first inequality, assume that $\kappa<\mmm_{\sigma-centered}$ and that we are given $\{T_{\alpha}:\alpha<\kappa\}$ with $T_{\alpha}\in \nwd$. We define a partial order $\mathbb{P}$ consisting of finite partial functions from $\omega$ to $\nwd\times\omega$. Let $p=\{(A^{p}_{k},i^{p}_{k}):k<n\}$. The order is given by $p\leq q$ if $dom(q)\subseteq dom(p)$ and for each $k\in dom(q)$ we have $i^{p}_{k}>i^{q}_{k}$, $A^{q}_{k}\subseteq A^{p}_{k}$ and $A^{p}_{k}\cap 2^{i^{q}_{k}}=A^{p}_{k}\cap 2^{i^{q}_{k}}$. First, observe that $\mathbb{P}$ is $\sigma$-centered: if $p$ and $q$ have the same domains, and for each $k\in dom(p)$, $i^{p}_{k}=i^{q}_{k}=i$ and $A^{p}_{k}\cap 2^{i}=A^{q}_{k}\cap 2^{i}$, then $p$ and $q$ are compatible. Second, note that for any $\alpha<\kappa$ the set $D_{\alpha}=\{ p\in\mathbb{P}: A^{p}_{k}=T_{\alpha} \text{ for some } k\in dom(p)\}$ is dense in $\mathbb{P}$.
If $G\subseteq\mathbb{P}$ is generic filter over the family $\{D_{\alpha}:\alpha<\kappa\}$ then, the sequence $\{B_{n}:n\in\omega\}\subseteq nwd$ defined as $B_{n}=\bigcup\{A^{p}_{n}\cap i^{p}_{n}: p\in G\}$ is such that for each $\alpha$ there is $n\in\omega$ with $T_{\alpha}\subseteq B_{n}$ which finishes the proof.
\end{proof}

In this last theorem, we were do not know if any of these inequalities can be strict, leaving the following as a question.

\begin{question}
    Is $\mathrm{add}_{\omega}^*(\nwd)=\mathrm{add}(\Meager)$ true?
\end{question}

One can show that $\mathrm{non}_{\omega}^{*}(\ED_{fin})=\mathrm{cov}(\Meager)$ (the ideas are similar to the proof of $\mathrm{non}^{*}(\ED_{fin})=\mathrm{cov}(\Meager)$ in \cite{Hrus}). However, the following is unknown to us.

\begin{question}
    What are the values of $\mathrm{non}_{\omega}^{*}(\Solecki)$ and $\mathrm{non}_{\omega}^{*}(\ED)$?
\end{question}

In the case of analytic $P$-ideals, it is well known that $\mathrm{add}(\Null)\leq\mathrm{add}^*(\mathcal{I})\leq\bbb$ (see \cite{Stevo} and \cite{Hernand}). This, together with Propositions \ref{addStarLessThenAdd} and \ref{GeneralIneq} proves the following.

\begin{corollary}\label{analyPidealsAdd}
For analytic P-ideals $\mathcal{I}$, $\mathrm{add}(\Null)\leq\mathrm{add}(t^{0}_{\mathcal{B}}(\mathcal{I}))\leq \bbb$ for both $\mathcal{I}$-Laver and $\mathcal{I}$-Miller trees.
\end{corollary}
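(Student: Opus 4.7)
The plan is to chain together the bounds already established in the paper with the classical inequalities for analytic P-ideals cited just before the corollary. Recall that for any analytic P-ideal $\mathcal{I}$, the Todor\v{c}evi\'c/Hern\'andez-Hern\'andez results give $\mathrm{add}(\Null)\leq\mathrm{add}^*(\mathcal{I})\leq\bbb$, and by Proposition \ref{AddOmegaEqualAdd} we may replace $\mathrm{add}^*(\mathcal{I})$ with $\mathrm{add}^*_\omega(\mathcal{I})$ throughout, since $\mathcal{I}$ is a P-ideal. So in the P-ideal case,
\[
\mathrm{add}(\Null)\;\leq\;\mathrm{add}^*_\omega(\mathcal{I})\;\leq\;\bbb.
\]

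For the upper bound $\mathrm{add}(t^{0}_{\mathcal{B}}(\mathcal{I}))\leq\bbb$, I would apply Theorem \ref{UpperBoundAdd}, which says $\mathrm{add}(t^{0}_{\mathcal{B}}(\mathcal{I}))\leq\mathrm{add}^*_\omega(\mathcal{I})$, and then chain this with $\mathrm{add}^*_\omega(\mathcal{I})\leq\bbb$ from the previous display. This works uniformly for both $\ell^{0}_{\mathcal{B}}(\mathcal{I})$ and $m^{0}_{\mathcal{B}}(\mathcal{I})$ since Theorem \ref{UpperBoundAdd} is stated for both.

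For the lower bound $\mathrm{add}(\Null)\leq\mathrm{add}(t^{0}_{\mathcal{B}}(\mathcal{I}))$, I would invoke Proposition \ref{addStarLessThenAdd}, which yields $\min\{\bbb,\mathrm{add}^*_\omega(\mathcal{I})\}\leq\mathrm{add}(t^{0}_{\mathcal{B}}(\mathcal{I}))$. Since $\mathrm{add}(\Null)\leq\bbb$ is part of Cicho\'n's diagram, and $\mathrm{add}(\Null)\leq\mathrm{add}^*_\omega(\mathcal{I})$ by the display above, we get $\mathrm{add}(\Null)\leq\min\{\bbb,\mathrm{add}^*_\omega(\mathcal{I})\}$, and hence the desired bound.

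There is no real obstacle here: the corollary is a bookkeeping exercise that combines Propositions \ref{addStarLessThenAdd} and \ref{AddOmegaEqualAdd}, Theorem \ref{UpperBoundAdd}, the standard Cicho\'n inequality $\mathrm{add}(\Null)\leq\bbb$, and the classical fact that analytic P-ideals have $\mathrm{add}(\Null)\leq\mathrm{add}^*(\mathcal{I})\leq\bbb$. The one subtlety worth flagging in the write-up is the use of Proposition \ref{AddOmegaEqualAdd} to pass between $\mathrm{add}^*$ and $\mathrm{add}^*_\omega$; everywhere else the inequalities are direct.
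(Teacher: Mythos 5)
Your proof is correct and follows essentially the same route as the paper: the corollary is obtained by chaining the classical fact $\mathrm{add}(\Null)\leq\mathrm{add}^*(\mathcal{I})\leq\bbb$ for analytic P-ideals with Proposition \ref{AddOmegaEqualAdd} (to pass to $\mathrm{add}^*_\omega$), Proposition \ref{addStarLessThenAdd} for the lower bound, and the upper bound on $\mathrm{add}(t^0_{\mathcal{B}}(\mathcal{I}))$ by $\mathrm{add}^*_\omega(\mathcal{I})$. The one place where your bookkeeping is actually sharper than the paper's stated citation is the upper bound in the Miller case: the paper invokes Proposition \ref{GeneralIneq}, which via $\mathrm{add}\leq\mathrm{cov}$ gives $\mathrm{add}(\ell^0_{\mathcal{B}}(\mathcal{I}))\leq\bbb$ but only $\mathrm{add}(m^0_{\mathcal{B}}(\mathcal{I}))\leq\ddd$, whereas your appeal to Theorem \ref{UpperBoundAdd} (i.e.\ $\mathrm{add}(t^0_{\mathcal{B}}(\mathcal{I}))\leq\mathrm{add}^*_\omega(\mathcal{I})=\mathrm{add}^*(\mathcal{I})\leq\bbb$) handles both tree types uniformly, which is what the paper's own displayed inequality after Theorem \ref{UpperBoundAdd} provides anyway. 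No gaps.
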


For the ideals $\FF$ and $\nwd$ we have the following application.

\begin{corollary}\label{FFnwdAdd}
    $\mathrm{add}(t^{0}_{\mathcal{B}}(\FF))=\bbb$ and $\mmm_{\sigma-\text{centered}}\leq \mathrm{add}(t^{0}_{\mathcal{B}}(\nwd))\leq \mathrm{add}(\Meager)$ hold for both $\mathcal{I}$-Laver and $\mathcal{I}$-Miller trees.
\end{corollary}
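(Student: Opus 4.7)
The plan is to simply combine the general sandwich bound with the specific calculations of $\mathrm{add}^*_\omega$ for the two ideals in question. By Proposition \ref{addStarLessThenAdd}, we always have $\min\{\bbb, \mathrm{add}^*_\omega(\mathcal{I})\} \leq \mathrm{add}(t^0_\mathcal{B}(\mathcal{I}))$, while by Theorem \ref{UpperBoundAdd} we have $\mathrm{add}(t^0_\mathcal{B}(\mathcal{I})) \leq \mathrm{add}^*_\omega(\mathcal{I})$. Thus for any ideal $\mathcal{I}$ with $\mathrm{add}^*_\omega(\mathcal{I}) \leq \bbb$, these two bounds collapse to equality, and in general they sandwich $\mathrm{add}(t^0_\mathcal{B}(\mathcal{I}))$ tightly.

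For the $\FF$ statement, I would invoke Proposition \ref{FFAddStarOmega}, which gives $\mathrm{add}^*_\omega(\FF) = \bbb$. Plugging this into the two bounds above immediately gives $\bbb \leq \mathrm{add}(t^0_\mathcal{B}(\FF)) \leq \bbb$, hence equality, and this works uniformly for both $m^0_\mathcal{B}(\FF)$ and $\ell^0_\mathcal{B}(\FF)$.

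For the $\nwd$ statement, the key auxiliary observation is that $\mathrm{add}(\Meager) \leq \bbb$ (a standard ZFC inequality from Cichoń's diagram). Combined with Theorem \ref{nwdAddStarOmega}, this yields $\mathrm{add}^*_\omega(\nwd) \leq \mathrm{add}(\Meager) \leq \bbb$, so the $\min$ in Proposition \ref{addStarLessThenAdd} simplifies: $\min\{\bbb, \mathrm{add}^*_\omega(\nwd)\} = \mathrm{add}^*_\omega(\nwd)$. The sandwich then collapses to $\mathrm{add}(t^0_\mathcal{B}(\nwd)) = \mathrm{add}^*_\omega(\nwd)$, and applying Theorem \ref{nwdAddStarOmega} once more gives the desired chain $\mmm_{\sigma-\text{centered}} \leq \mathrm{add}(t^0_\mathcal{B}(\nwd)) \leq \mathrm{add}(\Meager)$, valid for both tree types.

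There is essentially no real obstacle here; the corollary is a direct bookkeeping consequence of results already proved in this section. The only point requiring a moment of care is invoking $\mathrm{add}(\Meager) \leq \bbb$ so that the lower bound from Proposition \ref{addStarLessThenAdd} becomes the full $\mathrm{add}^*_\omega(\nwd)$ rather than being artificially capped by $\bbb$.
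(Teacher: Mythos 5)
Your proposal is correct, and for the $\FF$ part it coincides with the paper's argument: the lower bound $\min\{\bbb,\mathrm{add}^*_\omega(\FF)\}$ from Proposition \ref{addStarLessThenAdd}, the upper bound $\mathrm{add}^*_\omega(\FF)$ from Theorem \ref{UpperBoundAdd}, and Proposition \ref{FFAddStarOmega} collapse the sandwich to $\bbb$. For the $\nwd$ part you take a slightly different route on the upper bound: the paper obtains $\mathrm{add}(t^0_{\mathcal{B}}(\nwd))\leq\mathrm{add}(\Meager)$ by citing Theorem \ref{treeAddsCohen}, i.e.\ via the fact that $\MM_{\nwd}$ and $\LL_{\nwd}$ add Cohen reals (Theorem \ref{SabZapl} and Proposition \ref{LaverAddsCohen}), whereas you bypass the forcing argument entirely and chain $\mathrm{add}(t^0_{\mathcal{B}}(\nwd))\leq\mathrm{add}^*_\omega(\nwd)\leq\mathrm{add}(\Meager)$ using Theorem \ref{UpperBoundAdd} together with the right-hand inequality of Theorem \ref{nwdAddStarOmega}. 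Both derivations rest on results already established in the paper; yours is purely combinatorial, stays entirely within the $\mathrm{add}^*_\omega$ calculus of Section 4, and (after observing $\mathrm{add}(\Meager)\leq\bbb$, which you correctly flag as the one point needing care) actually yields the slightly sharper conclusion $\mathrm{add}(t^0_{\mathcal{B}}(\nwd))=\mathrm{add}^*_\omega(\nwd)$. The paper's route through Theorem \ref{treeAddsCohen} is less economical here but has the advantage of applying to any Cohen-adding tree type (e.g.\ $\FM$) and of simultaneously bounding the covering numbers, which is why it is the reference the authors reuse elsewhere.
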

\begin{proof}
    It follows from Theorems \ref{nwdAddStarOmega} and \ref{treeAddsCohen} together with Propositions \ref{addStarLessThenAdd} and \ref{FFAddStarOmega}.
\end{proof}

We will now focus on calculating the covering numbers. The $\mathcal{I}$-Miller trees always add an unbounded real and $\mathcal{I}$-Laver add a dominating real, regardless of the ideal $\mathcal{I}$. As a consequence we have.

\begin{proposition}\label{GeneralIneq}
    For any ideal $\mathcal{I}$, the inequalities $\mathrm{cov}(\Meager)\leq \mathrm{cov}(m^{0}_{\mathcal{B}}(\mathcal{I}))\leq \ddd$ and $\mathrm{cov}(\ell^{0}_{\mathcal{B}}(\mathcal{I}))\leq \bbb$ hold.
\end{proposition}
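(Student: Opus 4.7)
The plan is to produce explicit Borel witnesses in each of the three inequalities and invoke the dichotomies of Theorems \ref{SabokZaplDich} and \ref{MillerDich}.

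For the inequality $\mathrm{cov}(\Meager)\leq \mathrm{cov}(m^{0}_{\mathcal{B}}(\mathcal{I}))$, I would show the stronger containment $m^{0}_{\mathcal{B}}(\mathcal{I})\subseteq\Meager$. By Theorem \ref{SabokZaplDich}, it suffices to check that each generator $M_{\phi}$ is meager. Writing $M_{\phi}=\bigcup_{k\in\omega}M_{\phi}^{k}$ where $M_{\phi}^{k}=\{x:\forall n\geq k\; x(n)\in\phi(x|_{n})\}$, each $M_{\phi}^{k}$ is closed, and given any $\sigma\in\omega^{<\omega}$ one extends to $\tau\supseteq\sigma$ with $|\tau|\geq k$ and picks $n\in\omega\setminus\phi(\tau)$ (possible since $\phi(\tau)\in\mathcal{I}$ is a proper subset of $\omega$); then $[\tau^{\frown}n]$ avoids $M_{\phi}^{k}$, witnessing nowhere density.

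For $\mathrm{cov}(m^{0}_{\mathcal{B}}(\mathcal{I}))\leq\ddd$, the natural family is $\{D_{g}:g\in\omega^{\omega}\}$ where $D_{g}=\{x\in\omega^{\omega}:x\leq^{*}g\}$. A dominating family of size $\ddd$ yields $\bigcup_{\alpha<\ddd}D_{g_{\alpha}}=\omega^{\omega}$, so it only remains to show each $D_{g}$ lies in $m^{0}_{\mathcal{B}}(\mathcal{I})$. Since $D_{g}$ is Borel, Theorem \ref{SabokZaplDich} reduces this to verifying that no $T\in\MM_{\mathcal{I}}$ has $[T]\subseteq D_{g}$: any $\sigma\in T$ has an extension $\tau$ with $succ_{T}(\tau)\notin\mathcal{I}$, in particular infinite (as $\mathcal{I}\supseteq\FIN$), hence containing some $n>g(|\tau|)$, which yields branches of $T$ escaping $g$ infinitely often.

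For $\mathrm{cov}(\ell^{0}_{\mathcal{B}}(\mathcal{I}))\leq\bbb$, the trick is to realize the usual "not-dominating" sets directly as $L_{\phi}$-sets. For each $f\in\omega^{\omega}$, define $\phi_{f}:\omega^{<\omega}\to\mathcal{I}$ by $\phi_{f}(\sigma)=\{0,1,\dots,f(|\sigma|)-1\}$, which is finite and hence in $\mathcal{I}$. Then $L_{\phi_{f}}=\{x\in\omega^{\omega}:\exists^{\infty}n\; x(n)<f(n)\}$, and if $\{f_{\alpha}:\alpha<\bbb\}$ is unbounded, every $x\in\omega^{\omega}$ fails to $\leq^{*}$-dominate at least one $f_{\alpha}$, placing $x\in L_{\phi_{f_{\alpha}}}$. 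Thus $\bigcup_{\alpha<\bbb}L_{\phi_{f_{\alpha}}}=\omega^{\omega}$. None of the three arguments has any genuine obstacle; the only conceptual point is matching the forcing-theoretic slogan (Miller-type adds unbounded reals, Laver-type adds dominating reals) to the correct Borel witness inside the dichotomy.
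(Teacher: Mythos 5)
Your proof is correct and essentially the same as the paper's: the first inequality is obtained, as in the paper, from the fact that each generator $M_{\phi}$ is meager (so $m^{0}_{\mathcal{B}}(\mathcal{I})\subseteq\Meager$), and the last two inequalities use exactly your witnesses, namely $\phi_{\alpha}(\sigma)=\{0,\dots,f_{\alpha}(|\sigma|)-1\}$ for a dominating, respectively unbounded, family $\{f_{\alpha}\}$. The only cosmetic difference is in the middle inequality: the paper notes directly that your set $D_{g}=\{x: x\leq^{*}g\}$ is itself a set of the form $M_{\phi}$ (with $\phi(\sigma)=\{0,\dots,g(|\sigma|)\}\in\Fin\subseteq\mathcal{I}$), so your detour through the dichotomy of Theorem \ref{SabokZaplDich} to place $D_{g}$ in $m^{0}_{\mathcal{B}}(\mathcal{I})$, while valid, is not needed.
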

\begin{proof}
The first inequality follows from the fact that $M_{\phi}$'s are meager. For the second inequality let $\{f_{\alpha}:\alpha<\ddd\}\subseteq\omega^{\omega}$ be a dominating family. If $\phi_{\alpha}$ is such that $\phi_{\alpha}(\sigma)=f_{\alpha}(|\sigma|)$ then the sets $M_{\phi_{\alpha}}$ cover the entire $\omega^{\omega}$. For the last inequality pick an unbounded family instead of a dominating one.
\end{proof}

Note that Theorem \ref{treeAddsCohen} and Proposition \ref{GeneralIneq} imply that $\mathrm{cov}(m^{0}_{\mathcal{B}}(\mathcal{I}))=\mathrm{cov}(\Meager)$ for any ideal $\mathcal{I}$ such that $\MM_{\mathcal{I}}$ adds Cohen reals. In the case of Laver trees we have the following result.

\begin{proposition}\label{covLaverNWD}
    $\mathrm{cov}(\ell^{0}_{\mathcal{B}}(\nwd))=\mathrm{add}(\Meager)$.
\end{proposition}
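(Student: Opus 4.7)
The inequality $\mathrm{cov}(\ell^{0}_{\mathcal{B}}(\nwd))\leq\mathrm{add}(\Meager)$ is immediate from what is already available: by Proposition \ref{GeneralIneq}, $\mathrm{cov}(\ell^{0}_{\mathcal{B}}(\nwd))\leq\bbb$, and since $\LL_{\nwd}$ adds Cohen reals by Proposition \ref{LaverAddsCohen}, Theorem \ref{treeAddsCohen} yields $\mathrm{cov}(\ell^{0}_{\mathcal{B}}(\nwd))\leq\mathrm{cov}(\Meager)$. Combining these gives $\mathrm{cov}(\ell^{0}_{\mathcal{B}}(\nwd))\leq\min\{\bbb,\mathrm{cov}(\Meager)\}=\mathrm{add}(\Meager)$, so the remaining task is the reverse inequality.

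For the converse, the plan is to fix $\kappa<\mathrm{add}(\Meager)$ (so that simultaneously $\kappa<\bbb$ and $\kappa<\mathrm{cov}(\Meager)$) together with a family $\phi_{\alpha}:\omega^{<\omega}\rightarrow\nwd$ for $\alpha<\kappa$, and produce one real $x\in\omega^{\omega}$ with $x\notin L_{\phi_{\alpha}}$ for every $\alpha$, identifying $\omega$ with $2^{<\omega}$. For each $\alpha,\sigma$ I would first replace $\phi_{\alpha}(\sigma)$ by a nowhere dense tree $T_{\alpha,\sigma}\subseteq 2^{<\omega}$ covering it, so that $M_{\alpha}:=\bigcup_{\sigma\in\omega^{<\omega}}[T_{\alpha,\sigma}]\subseteq 2^{\omega}$ is a countable union of closed nowhere dense sets, hence meager. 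Using $\kappa<\mathrm{cov}(\Meager)$, pick $c\in 2^{\omega}\setminus\bigcup_{\alpha}M_{\alpha}$; then for every $\alpha,\sigma$ the number $F_{\alpha,\sigma}(c):=\min\{n:c|_{n}\notin T_{\alpha,\sigma}\}$ is finite. For each $\alpha$ put $\beta_{\alpha}(m):=\max\{F_{\alpha,\sigma}(c):\sigma\in(2^{\leq m})^{\leq m}\}\in\omega$, a well-defined element of $\omega^{\omega}$, and use $\kappa<\bbb$ to pick $h\in\omega^{\omega}$ dominating every $\beta_{\alpha}$. Finally, define $k\in\omega^{\omega}$ recursively by $k(0)=1$ and $k(n+1)=h(k(n))+n+2$, and set $x(n)=c|_{k(n+1)}$.

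The key step, and main obstacle to navigate, is verifying $x\notin L_{\phi_{\alpha}}$ in the presence of the circularity that the relevant tree $T_{\alpha,x|n}$ depends on the already-constructed $x|n$. The construction is designed so that each entry of $x|n=(c|_{k(1)},\dots,c|_{k(n)})$ has length at most $k(n)$, and since $k(n)\geq n$, the sequence $x|n$ lies in $(2^{\leq k(n)})^{\leq k(n)}$; therefore $F_{\alpha,x|n}(c)\leq\beta_{\alpha}(k(n))$. Once $n$ is large enough that $h$ dominates $\beta_{\alpha}$ at $k(n)$, this becomes $F_{\alpha,x|n}(c)\leq h(k(n))<k(n+1)$, which, since $T_{\alpha,x|n}$ is downward closed, forces $c|_{k(n+1)}\notin T_{\alpha,x|n}$, i.e.\ $x(n)\notin\phi_{\alpha}(x|n)$ from some point onward. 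Hence $x\notin L_{\phi_{\alpha}}$ for each $\alpha<\kappa$, finishing the proof.
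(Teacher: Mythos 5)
Your argument is correct and takes essentially the same route as the paper: the upper bound uses exactly the same three cited results, and for the lower bound the paper likewise picks one real $z\in 2^{\omega}$ avoiding all $[\phi_{\alpha}(\sigma)]$ via $\kappa<\mathrm{cov}(\Meager)$, dominates the escape-length functions via $\kappa<\bbb$ (indexed directly by $\omega^{<\omega}$ instead of your maxima over the finite sets $(2^{\leq m})^{\leq m}$), and then takes $x(n)$ to be a sufficiently long initial segment of $z$, using that the $\phi_{\alpha}(\sigma)$ are trees. The only point to tidy is that your recursion for $k$ needs $h$ to be taken increasing (harmless, WLOG) so that all entries of $x|_{n}$ indeed have length at most $k(n)$; with that, your bookkeeping matches the paper's.
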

\begin{proof}
It follows from Proposition \ref{LaverAddsCohen}, Theorem \ref{treeAddsCohen} and Proposition \ref{GeneralIneq} that $\mathrm{cov}(\ell^{0}_{\mathcal{B}}(\nwd))\leq \mathrm{add}(\Meager)$. To prove $\mathrm{add}(\Meager)\leq \mathrm{cov}(\ell^{0}_{\mathcal{B}}(\nwd))$ assume that we are given a family ${\phi_{\alpha}:\alpha<\kappa}$, $\phi_{\alpha}:\omega^{<\omega}\rightarrow \nwd$ where $\kappa<\mathrm{add}(\Meager)$. Without loss of generality we may assume that  $\phi_{\alpha}(\sigma)$ is a nowhere dense subtree of $2^{<\omega}$. As $\kappa<\mathrm{cov}(\Meager)$, we pick $z\in 2^{\omega}\setminus \bigcup_{\alpha, \sigma}[\phi_{\alpha}(\sigma)]$. For each $\alpha<\kappa$ define $f_{\alpha}:\omega^{<\omega}\rightarrow\omega$ such that $f_{\alpha}(\sigma)$ is any natural number $m\in\omega$ such that $x|_{m}\notin \phi_{\alpha}(\sigma)$. As $\kappa<\bbb$ there is $g:\omega^{<\omega}\rightarrow\omega$ which dominates all $f_{\alpha}$'s. Construct $x\in\omega^{\omega}$ inductively in such a way that $x(n)=z|_{g(x|_{n})}$. It follows that $x\notin\bigcup_{\alpha}L_{\phi_{\alpha}}$.
\end{proof}

There is a simple relationship between the covering numbers and Katetov reducibility.

\begin{proposition}
    The Katetov reducibility $\mathcal{I}_{0}\leq_{K}\mathcal{I}_{1}$ implies $\mathrm{cov}(m^{0}_{\mathcal{B}}(\mathcal{I}_{1}))\leq \mathrm{cov}(m^{0}_{\mathcal{B}}(\mathcal{I}_{0}))$ and $\mathrm{cov}(\ell^{0}_{\mathcal{B}}(\mathcal{I}_{1}))\leq \mathrm{cov}(\ell^{0}_{\mathcal{B}}(\mathcal{I}_{0}))$.  
\end{proposition}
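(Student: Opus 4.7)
The main idea is to push coverings forward along the continuous map induced by the Katetov reduction. Suppose $\mathcal{I}_0 \leq_K \mathcal{I}_1$ is witnessed by $\pi: \omega \to \omega$. I would define $F: \omega^\omega \to \omega^\omega$ coordinate-wise by $F(x)(n) = \pi(x(n))$, and similarly $\bar{\pi}: \omega^{<\omega} \to \omega^{<\omega}$ by $\bar{\pi}(\sigma)(i) = \pi(\sigma(i))$ for $i < |\sigma|$. Given $\phi: \omega^{<\omega} \to \mathcal{I}_0$, set
\[
\phi'(\sigma) = \pi^{-1}[\phi(\bar{\pi}(\sigma))].
\]
Since $\phi(\bar{\pi}(\sigma)) \in \mathcal{I}_0$ and $\pi$ witnesses the Katetov reduction, $\phi'(\sigma) \in \mathcal{I}_1$, so $\phi'$ codes a basic generator of $m^{0}_{\mathcal{B}}(\mathcal{I}_1)$ (respectively $\ell^{0}_{\mathcal{B}}(\mathcal{I}_1)$).

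The key computation is that $M_{\phi'} = F^{-1}[M_\phi]$: indeed, $x \in M_{\phi'}$ iff for almost all $n$ we have $x(n) \in \pi^{-1}[\phi(\bar{\pi}(x|_n))]$, iff $F(x)(n) \in \phi(F(x)|_n)$ for almost all $n$, iff $F(x) \in M_\phi$. Replacing $\forall^\infty$ by $\exists^\infty$ throughout yields $L_{\phi'} = F^{-1}[L_\phi]$ by the same calculation.

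To finish, suppose $\{M_{\phi_\alpha} : \alpha < \kappa\}$ covers $\omega^\omega$ with $\kappa = \mathrm{cov}(m^{0}_{\mathcal{B}}(\mathcal{I}_0))$. We may assume without loss of generality that the covering consists of basic generators $M_\phi$, since by Theorem \ref{SabokZaplDich} every set in $m^{0}_{\mathcal{B}}(\mathcal{I}_0)$ is a countable union of such generators and passing to such a refinement does not increase the cardinality. Then $\{M_{\phi'_\alpha} : \alpha < \kappa\} = \{F^{-1}[M_{\phi_\alpha}] : \alpha < \kappa\}$ is a family in $m^{0}_{\mathcal{B}}(\mathcal{I}_1)$ whose union equals $F^{-1}[\omega^\omega] = \omega^\omega$, giving $\mathrm{cov}(m^{0}_{\mathcal{B}}(\mathcal{I}_1)) \leq \kappa$. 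The Laver case follows identically, invoking Theorem \ref{MillerDich} instead of Theorem \ref{SabokZaplDich}.

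There is no substantive obstacle; the proof is a direct pullback construction, and the only point requiring a moment's care is the reduction to coverings by basic generators, which is essentially a cardinal arithmetic observation about $\sigma$-ideals.
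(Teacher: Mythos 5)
Your proof is correct and follows essentially the same route as the paper: both define the pulled-back functions $\psi_\alpha(\sigma)=\pi^{-1}[\phi_\alpha(\bar{\pi}(\sigma))]$ and use the identity $L_{\psi_\alpha}=F^{-1}[L_{\phi_\alpha}]$ (resp.\ $M_{\psi_\alpha}=F^{-1}[M_{\phi_\alpha}]$), the paper merely phrasing it contrapositively via a point $x\notin\bigcup_\alpha L_{\psi_\alpha}$ whose image $\pi\circ x$ avoids $\bigcup_\alpha L_{\phi_\alpha}$. Your explicit reduction to coverings by basic generators is a harmless elaboration of what the paper leaves implicit.
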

\begin{proof}
Let $\pi:dom(\mathcal{I}_{1})\rightarrow dom(\mathcal{I}_{0})$ be Katetov reduction and let $\kappa<\mathrm{cov}(\ell^{0}_{\mathcal{B}}(\mathcal{I}_{1}))$. Given a family $\{\phi_{\alpha}:\alpha\in\kappa\}$ such that $\phi_{\alpha}:dom(\mathcal{I}_{0})^{<\omega}\rightarrow \mathcal{I}_{0}$, define $\psi_{\alpha}:dom(\mathcal{I}_{1})^{<\omega}\rightarrow \mathcal{I}_{1}$ as $\psi_{\alpha}(\sigma)=\pi^{-1}[\phi_{\alpha}(\pi(\sigma))]\in \mathcal{I}_{1}$. Now, if $x\notin\bigcup_{\alpha}L_{\psi_{\alpha}}$ then $\pi\circ x\notin\bigcup_{\alpha}L_{\phi_{\alpha}}$ which finish the proof. 
\end{proof}

As a corollary we get 

\begin{corollary}\label{covNumbEqBBB}
    For the ideals $\FF$, $\RandomGraph$, $\ED$ and $conv$, the covering numbers of the $\sigma$-ideal $\ell^{0}_{\mathcal{B}}(\mathcal{I})$ are equal to $\bbb$.
\end{corollary}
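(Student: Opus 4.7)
The plan is to assemble three earlier results in the paper rather than work directly. From Corollary \ref{FFnwdAdd} we have $\mathrm{add}(\ell^{0}_{\mathcal{B}}(\FF))=\bbb$, and Proposition \ref{GeneralIneq} gives the universal upper bound $\mathrm{cov}(\ell^{0}_{\mathcal{B}}(\mathcal{I}))\leq\bbb$ for every ideal $\mathcal{I}$. Since $\ell^{0}_{\mathcal{B}}(\FF)$ is a $\sigma$-ideal not containing the whole space, the trivial inequality $\mathrm{add}(\mathcal{J})\leq\mathrm{cov}(\mathcal{J})$ applies, sandwiching
\[
\bbb=\mathrm{add}(\ell^{0}_{\mathcal{B}}(\FF))\leq\mathrm{cov}(\ell^{0}_{\mathcal{B}}(\FF))\leq\bbb,
\]
which settles the case $\mathcal{I}=\FF$.

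For the remaining three ideals, I would read off the Katetov diagram in the introduction the relations $\RandomGraph\leq_{K}\FF$, $\ED\leq_{K}\FF$ and $conv\leq_{K}\FF$ (the first two via $\RandomGraph\to\ED\to\FF$ or $\RandomGraph\to conv\to\FF$, the third is a direct arrow). Applying the Katetov-reducibility proposition proved immediately before this corollary gives
\[
\mathrm{cov}(\ell^{0}_{\mathcal{B}}(\FF))\leq\mathrm{cov}(\ell^{0}_{\mathcal{B}}(\mathcal{I}))
\]
for each $\mathcal{I}\in\{\RandomGraph,\ED,conv\}$, and combining once more with Proposition \ref{GeneralIneq} produces the two-sided bound $\bbb\leq\mathrm{cov}(\ell^{0}_{\mathcal{B}}(\mathcal{I}))\leq\bbb$.

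There is no genuine obstacle here: the corollary is a purely formal consequence of what has already been done. The only verification needed is that the cited Katetov arrows really do point as claimed (they do, since the convention in the diagram is that arrows point towards the Katetov-stronger ideal, and $\FF$ sits above $\RandomGraph,\ED,conv$).
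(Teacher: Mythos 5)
Your proposal is correct and follows essentially the same route the paper intends for this corollary: the universal upper bound $\mathrm{cov}(\ell^{0}_{\mathcal{B}}(\mathcal{I}))\leq\bbb$ from Proposition \ref{GeneralIneq}, the lower bound for $\FF$ via $\bbb=\mathrm{add}(\ell^{0}_{\mathcal{B}}(\FF))\leq\mathrm{cov}(\ell^{0}_{\mathcal{B}}(\FF))$ from Corollary \ref{FFnwdAdd}, and the transfer to $\RandomGraph$, $\ED$ and $conv$ through the Katetov proposition using $\mathcal{I}\leq_{K}\FF$ as read off the diagram.
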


It follows implicitly that, for the ideals mentioned in the previous result, the $\mathcal{I}$-Laver trees do not add a Cohen real. Otherwise, by Theorem \ref{treeAddsCohen}, these covering numbers would be below $\mathrm{cov}(\Meager)$. We do not know if this is true in general for $\mathcal{I}$-Laver trees such that  $\mathcal{I}$ is not Katetov above of $nwd$. In particular we have the following question.

\begin{question}\label{doesLavAddCoh}
    Do one of the forcings $\LL_{\Solecki}$, $\LL_{\SUM}$ $\LL_{\mathcal{Z}}$ or $\LL_{\ED_{fin}}$ add a Cohen real?
\end{question}

The situation is not clear with the covering numbers of the $\mathcal{I}$-Miller trees. We know from the previous results that $\max\{\mathrm{cov}(\mathcal{M}),\bbb\} \leq  \mathrm{cov}(m^{0}_{\mathcal{B}}(\FF))\leq \ddd$ but we do not know any other inequalities.

\begin{question}
    Is it possible to consistently distinguish the covering numbers $\mathrm{cov}(m^{0}_{\mathcal{B}}(\mathcal{I}))$ for different $\mathcal{I}$'s? In particular, is $\mathrm{cov}(m^{0}_{\mathcal{B}}(\FF))<\ddd$ consistent? 
\end{question}

For our last proposition, $\mathcal{E}$ will denote the $\sigma$-ideal generated by closed sets of measure zero on $2^{\omega}$. Clearly $\mathcal{E}\subseteq \Meager\cap\Null$, the inclusion is strict (see \cite{BJ}).

\begin{proposition}\label{SoleckiCovering}
    $min\{\bbb,\mathrm{non}(\mathcal{E})\}\leq \mathrm{cov}(\ell^{0}_{\mathcal{B}}(\mathcal{S})), \mathrm{cov}(m^{0}_{\mathcal{B}}(\mathcal{S}))$.
\end{proposition}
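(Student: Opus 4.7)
The plan is to establish both inequalities by a single construction: given $\kappa<\min\{\bbb, \mathrm{non}(\mathcal{E})\}$ and a family $\{\phi_\alpha : \alpha < \kappa\}$ with $\phi_\alpha:\omega^{<\omega}\to\mathcal{S}$, we will produce $x \in \omega^\omega$ satisfying $x(n) \notin \phi_\alpha(x|_n)$ for all but finitely many $n$ and each $\alpha$. This forces $x \notin L_{\phi_\alpha}$ and, a fortiori, $x \notin M_{\phi_\alpha}$, settling both bounds at once. Throughout we identify $\omega$ with the countable set $\Omega$ via a fixed bijection $e$.

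Since every element of $\mathcal{S}$ is contained in a basic generator, for each pair $(\alpha,\sigma)$ we can fix a finite $F^\alpha_\sigma \subseteq \Cantor$ with $\phi_\alpha(\sigma) \subseteq \{n\in\omega : e(n) \cap F^\alpha_\sigma \neq \emptyset\}$. The union $Z = \bigcup_{\alpha,\sigma} F^\alpha_\sigma$ has cardinality at most $\kappa<\mathrm{non}(\mathcal{E})$, hence $Z\in\mathcal{E}$; fix an increasing sequence $(K_m)_{m\in\omega}$ of closed null subsets of $\Cantor$ covering $Z$, and for each $m$ pick a clopen $C_m \in \Omega$ disjoint from $K_m$. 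Such $C_m$ exists because $\Cantor \setminus K_m$ is open of full measure and therefore contains clopen sets of every dyadic measure.

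Define $g_\alpha:\omega^{<\omega}\to\omega$ by $g_\alpha(\sigma) = \min\{m : F^\alpha_\sigma \subseteq K_m\}$. After enumerating $\omega^{<\omega}=\{\sigma_k : k\in\omega\}$, the functions $\tilde g_\alpha(k) = g_\alpha(\sigma_k)$ form a family of size $\kappa<\bbb$, so there is $G \in \omega^\omega$ dominating each $\tilde g_\alpha$; setting $h(\sigma_k) = G(k)$ makes $E_\alpha := \{\sigma\in\omega^{<\omega} : g_\alpha(\sigma) > h(\sigma)\}$ finite for every $\alpha$. Now recursively define $x(n) = e^{-1}(C_{h(x|_n)})$. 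Letting $L_\alpha = \max\{|\sigma|:\sigma\in E_\alpha\}+1$, for $n\geq L_\alpha$ the prefix $x|_n$ has length $n\geq L_\alpha$, hence lies outside $E_\alpha$; then $F^\alpha_{x|_n} \subseteq K_{g_\alpha(x|_n)} \subseteq K_{h(x|_n)}$, while $e(x(n)) = C_{h(x|_n)}$ is disjoint from $K_{h(x|_n)}$, so $x(n) \notin \phi_\alpha(x|_n)$.

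The main subtlety is reconciling the linear indexing by $\omega$ required to invoke $\bbb$ with the inherently tree-indexed nature of the data: finiteness of each $E_\alpha$ in the flat enumeration must translate into eventual avoidance along the branch $x$. This works because every finite subset of $\omega^{<\omega}$ has a uniform length bound, and the prefixes $x|_n$ strictly grow in length with $n$, so they eventually escape any fixed $E_\alpha$. Everything else — the choice of clopen sets disjoint from closed null sets, and the packaging of the finite sets $F^\alpha_\sigma$ inside a single $\mathcal{E}$-small union — is routine once the identification $\Omega\cong\omega$ is in place.
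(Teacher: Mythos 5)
Your proof is correct and follows essentially the same route as the paper's: cover the union of the finite generating sets $F^\alpha_\sigma$ by an increasing sequence of closed null sets using $\kappa<\mathrm{non}(\mathcal{E})$, dominate the induced index functions using $\kappa<\bbb$, and diagonalize by choosing measure-$1/2$ clopen sets disjoint from the relevant closed null sets, so that the resulting $x$ avoids every $L_{\phi_\alpha}$ and hence every $M_{\phi_\alpha}$. You merely spell out two points the paper leaves implicit (that every member of $\mathcal{S}$ sits inside a single generator $[F]$, and the flat enumeration of $\omega^{<\omega}$ needed to invoke $\bbb$), which is fine.
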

\begin{proof}
Let $\kappa<\bbb$ and let $\{\phi_{\alpha}:dom(\mathcal{S})^{<\omega}\rightarrow \mathcal{S}:\alpha<\kappa\}$. As $\kappa<\mathrm{non}(\mathcal{E})$ the set $\bigcup_{\alpha<\kappa}\bigcup_{\sigma}\phi_{\alpha}(\sigma)$ can be  covered by countable increasing union $\bigcup_{n}N_{n}$ of closed sets of measure zero. Now for each $\alpha<\kappa$, define a function $f_{\alpha}:\omega^{<\omega}\rightarrow\omega$ as $f_{\alpha}(\sigma)=min\{m\in\omega: \phi_{\alpha}(\sigma)\subset N_{m}\}$. The function $f_{\alpha}$ is well defined as each set $\phi_{\alpha}(\sigma)$ is finite and the $N_{n}$'s are increasing. Let $g:\omega^{<\omega}\rightarrow\omega$ be a function dominating all the $f_{\alpha}$'s. We define $x\in dom(\mathcal{S})^{\omega}$ such that $x(n)$ is disjoint from $N_{g(x|_{n})}$. Then clearly $x\notin\bigcup_{\alpha<\kappa}L_{\phi_{\alpha}}$.
\end{proof}

We finish this work by summarizing the results of this section and relating them to the incompatibility shrinking number.

\begin{theorem}
\begin{enumerate}[itemsep=0.3mm]
    \item If $\mathcal{I}$ is either $conv$, $\RandomGraph$ or $\ED$, then we have that $\omega_{1}\leq\mathrm{is}(\LL_{\mathcal{I}})\leq\bbb$ and $\bbb\leq\mathrm{is}(\MM_{\mathcal{I}})\leq\ddd$,
    
    \item  $\mathrm{add}(\Null)\leq\mathrm{is}(\LL_{\mathcal{I}})\leq\bbb$ and $\mathrm{add}(\Null)\leq\mathrm{is}(\MM_{\mathcal{I}})\leq\ddd$ hold for any analytic $P$-ideals $\mathcal{I}$,

    \item $\mathrm{is}(\MM_{{\nwd}})=\mathrm{add}(\Meager)$ and $\mmm_{\sigma\text{-centered}}\leq \mathrm{is}(\LL_{{\nwd}}) \leq \mathrm{add}(\Meager)$,

    \item For $\mathcal{I} = \FF$ we have $\mathrm{is}(\LL_{\mathcal{I}})=\bbb$ and $\bbb\leq\mathrm{is}(\MM_{\mathcal{I}})\leq\ddd$,
    
    \item $\omega_{1}\leq\mathrm{is}(\LL_{\mathcal{S}})\leq\bbb$, $min\{\bbb,\mathrm{non}(\mathcal{E})\}\leq\mathrm{is}(\MM_{\mathcal{S}})\leq\ddd$.
\end{enumerate}
\end{theorem}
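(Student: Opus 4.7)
The plan is to recognize the five-item theorem as a consolidation and read off each inequality from the machinery developed earlier. The basic skeleton is
$$\mathrm{add}(t^{0}_{\mathcal{B}}(\mathcal{I}))\leq \mathrm{is}(\TT)\leq \mathrm{cov}(t^{0}_{\mathcal{B}}(\mathcal{I}))$$
from Proposition \ref{ISBasicIneq}, refined on the Miller side by
$$\min\{\bbb,\mathrm{cov}_h(m^{0}_{\mathcal{B}}(\mathcal{I}))\}\leq \mathrm{is}_h(\MM_{\mathcal{I}})$$
from Theorem \ref{ISmainIneq}. I would then substitute the additivity and covering estimates for each choice of $\mathcal{I}$, invoking Katětov-monotonicity of covering numbers where needed.

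For item (1) the Laver upper bound is Corollary \ref{covNumbEqBBB} and the lower bound $\omega_{1}\leq\mathrm{is}(\LL_{\mathcal{I}})$ is immediate from $\sigma$-additivity. For the Miller side, I would use that $\RandomGraph,\ED,conv\leq_{K}\FF$ in the Katětov diagram to propagate $\mathrm{cov}(m^{0}_{\mathcal{B}}(\FF))\leq \mathrm{cov}(m^{0}_{\mathcal{B}}(\mathcal{I}))$ for each of these ideals; since Corollary \ref{FFnwdAdd} gives $\bbb=\mathrm{add}(m^{0}_{\mathcal{B}}(\FF))\leq\mathrm{cov}(m^{0}_{\mathcal{B}}(\FF))$, Theorem \ref{ISmainIneq} then yields $\mathrm{is}(\MM_{\mathcal{I}})\geq\min\{\bbb,\mathrm{cov}(m^{0}_{\mathcal{B}}(\mathcal{I}))\}=\bbb$, while the matching upper bound $\mathrm{is}(\MM_{\mathcal{I}})\leq\mathrm{cov}(m^{0}_{\mathcal{B}}(\mathcal{I}))\leq\ddd$ is Proposition \ref{GeneralIneq}. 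Items (2) and (4) are handled in exactly the same style: the lower bounds come from Corollaries \ref{analyPidealsAdd} and \ref{FFnwdAdd} respectively, and the upper bounds from Proposition \ref{GeneralIneq}.

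Item (3) is the only one that genuinely uses the Cohen-real characterization. Since $\nwd\leq_{K}\nwd$ trivially, $\MM_{\nwd}$ adds Cohen reals by Theorem \ref{SabZapl}; hence Theorem \ref{treeAddsCohen} combined with Proposition \ref{GeneralIneq} gives $\mathrm{cov}(m^{0}_{\mathcal{B}}(\nwd))=\mathrm{cov}(\mathcal{M})$. Plugging this into Theorem \ref{millercohenreals} on one side and Theorem \ref{ISmainIneq} on the other yields
$$\mathrm{add}(\mathcal{M})=\min\{\bbb,\mathrm{cov}(\mathcal{M})\}\leq \mathrm{is}_{h}(\MM_{\nwd})\leq \min\{\bbb,\mathrm{cov}(\mathcal{M})\}=\mathrm{add}(\mathcal{M}),$$
and a small verification that $\nwd$ is weakly homogeneous (any $\nwd$-positive set restricts to a copy of $2^{<\omega}$ via Proposition \ref{homog}) allows dropping the subscript $h$. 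The Laver half of (3) is Proposition \ref{covLaverNWD} paired with Corollary \ref{FFnwdAdd}. Finally, for item (5), Proposition \ref{SoleckiCovering} provides the Miller lower bound via Theorem \ref{ISmainIneq}, Proposition \ref{GeneralIneq} supplies the upper bounds $\leq\ddd$ and $\leq\bbb$, and $\omega_{1}\leq\mathrm{is}(\LL_{\Solecki})$ is trivial. I do not anticipate a real obstacle: the entire argument is bookkeeping, with the only mildly delicate point being the distinction between $\mathrm{is}$ and $\mathrm{is}_h$ in item (3), which is resolved by weak homogeneity.
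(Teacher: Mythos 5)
Most of your bookkeeping coincides with the paper's own proof: items (2), (3), (4) and the Laver halves of (1) and (5) are assembled from exactly the ingredients the paper cites (Proposition \ref{ISBasicIneq}, Corollaries \ref{analyPidealsAdd}, \ref{FFnwdAdd}, \ref{AddEqualOmegaOne}, \ref{covNumbEqBBB}, Propositions \ref{GeneralIneq} and \ref{covLaverNWD}, Theorems \ref{ISmainIneq} and \ref{millercohenreals}, plus weak homogeneity of $\nwd$ to drop the $h$'s in (3)), and your handling of the $\mathrm{is}$ versus $\mathrm{is}_h$ distinction in item (3) is exactly the paper's.

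The genuine gap is in your derivation of $\bbb\leq\mathrm{is}(\MM_{\mathcal{I}})$ in item (1). Theorem \ref{ISmainIneq} gives $\min\{\bbb,\mathrm{cov}_h(m^{0}_{\mathcal{B}}(\mathcal{I}))\}\leq\mathrm{is}_h(\MM_{\mathcal{I}})$, i.e.\ the lower bound involves the \emph{hereditary} covering number, while your Katetov argument (monotonicity of covering numbers together with Corollary \ref{FFnwdAdd}) only yields $\bbb\leq\mathrm{cov}(m^{0}_{\mathcal{B}}(\mathcal{I}))$ for the plain covering number. Since $\mathrm{cov}_h\leq\mathrm{cov}$, this does not give $\min\{\bbb,\mathrm{cov}_h(m^{0}_{\mathcal{B}}(\mathcal{I}))\}=\bbb$, and the $h$'s may only be dropped when $\MM_{\mathcal{I}}$ is (weakly) homogeneous, which the paper explicitly does not claim for $conv$, $\RandomGraph$ or $\ED$ (in contrast to $\nwd$). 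Moreover the Katetov monotonicity proposition is proved only for $\mathrm{cov}$, and its proof does not hereditize automatically: to work below an arbitrary $T\in\MM_{\mathcal{I}}$ one would have to push the functions $\psi_{\alpha}$ forward along the reduction over infinitely many preimage nodes, and the resulting sets need not lie in the target ideal. So the step ``$\mathrm{is}(\MM_{\mathcal{I}})\geq\min\{\bbb,\mathrm{cov}(m^{0}_{\mathcal{B}}(\mathcal{I}))\}$'' is unjustified as written; repairing it needs either a hereditary Katetov lemma or (weak) homogeneity of these ideals. For what it is worth, the paper's own one-line justification of (1) cites only \ref{ISBasicIneq}, \ref{AddEqualOmegaOne} and \ref{covNumbEqBBB}, which give $\omega_1$ rather than $\bbb$ on the Miller side, so your instinct to supply an extra argument is sound, but the argument you chose hinges precisely on the $\mathrm{cov}$-versus-$\mathrm{cov}_h$ conflation. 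The same subtlety is latent in item (5), where Proposition \ref{SoleckiCovering} bounds only the plain covering number before being fed into Theorem \ref{ISmainIneq}; there, however, you are mirroring the paper's own citation pattern.
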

\begin{proof}
    For (1) follows from Proposition \ref{ISBasicIneq} and Corollaries \ref{AddEqualOmegaOne} and \ref{covNumbEqBBB}. For (2), it follows from \ref{ISBasicIneq} with \ref{analyPidealsAdd} and \ref{GeneralIneq}. (3): For the Miller case follows from \ref{SabZapl}, \ref{ISmainIneq}, \ref{millercohenreals} and the fact that $\nwd$ is weakly homogeneous. For the Laver case follows from \ref{FFnwdAdd} and \ref{covLaverNWD}. (4) follows from \ref{ISBasicIneq}, \ref{FFnwdAdd} and \ref{GeneralIneq}. For (5) use \ref{ISBasicIneq} with \ref{ISmainIneq}, \ref{AddEqualOmegaOne}, \ref{GeneralIneq} and \ref{SoleckiCovering}.
\end{proof}

We do not know any other relationships between these cardinal invariants. In particular we do not know the answer to the following question.

\begin{question}
    What are the values of $\mathrm{is}(\MM_{\FF})$, $\mathrm{is}(\LL_{\nwd})$,  $\mathrm{is}(\LL_{\Solecki})$ and $\mathrm{is}(\LL_{\ED_{fin}})$?
\end{question}

\begin{acknowledgement}
    The authors would like to thank the participants of the joint topology and set theory seminar from Uniwersytet Wrocławski and from the Wrocław University of Science and Technology for many hours of stimulating conversations. We also would like to thank Osvaldo Guzmán for pointing out a mistake in the proof of Theorem \ref{pplusconstant1}.
\end{acknowledgement}

\bibliographystyle{alphadin} 
\bibliography{main}

\begin{thebibliography}{GRSS95}

% this bibliography is generated by alphadin.bst [8.2] from 2005-12-21

\providecommand{\url}[1]{\texttt{#1}}
\expandafter\ifx\csname urlstyle\endcsname\relax
  \providecommand{\doi}[1]{doi: #1}\else
  \providecommand{\doi}{doi: \begingroup \urlstyle{rm}\Url}\fi

\bibitem[AB89]{BlassShelah}
\textsc{Andreas~Blass}, Saharon~S.:
\newblock Near coherence of filters. III. A simplified consistency proof.
\newblock  30 (1989), Nr. 4, S. 530--538.
\newblock \url{http://dx.doi.org/10.1305/ndjfl/1093635236}. --
\newblock DOI 10.1305/ndjfl/1093635236

\bibitem[BJ95]{BJ}
\textsc{Bartoszy\'{n}ski}, Tomek ; \textsc{Judah}, Haim:
\newblock \emph{Set theory}.
\newblock A K Peters, Ltd., Wellesley, MA, 1995. --
\newblock  xii+546 S. --
\newblock ISBN 1--56881--044--X. --
\newblock On the structure of the real line

\bibitem[BKW17]{BreKhWoh}
\textsc{Brendle}, J\"{o}rg ; \textsc{Khomskii}, Yurii  ; \textsc{Wohofsky}, Wolfgang:
\newblock Cofinalities of {M}arczewski-like ideals.
\newblock {In: }\emph{Colloq. Math.} 150 (2017), Nr. 2, 269--279.
\newblock \url{http://dx.doi.org/10.4064/cm7113-5-2017}. --
\newblock DOI 10.4064/cm7113--5--2017. --
\newblock ISSN 0010--1354,1730--6302

\bibitem[Bla10]{Blasschar}
\textsc{Blass}, Andreas:
\newblock Combinatorial cardinal characteristics of the continuum.
\newblock \,Version:\,2010.
\newblock \url{http://dx.doi.org/10.1007/978-1-4020-5764-9\_7}.
\newblock {In: }\emph{Handbook of set theory. {V}ols. 1, 2, 3}.
\newblock Springer, Dordrecht, 2010. --
\newblock DOI 10.1007/978--1--4020--5764--9\_7. --
\newblock ISBN 978--1--4020--4843--2, 395--489

\bibitem[Bre99]{Brendle}
\textsc{Brendle}, J.:
\newblock Mutual generics and perfect free subsets.
\newblock {In: }\emph{Acta Math. Hungar.} 82 (1999), Nr. 1-2, 143--161.
\newblock \url{http://dx.doi.org/10.1023/A:1026458210592}. --
\newblock DOI 10.1023/A:1026458210592. --
\newblock ISSN 0236--5294,1588--2632

\bibitem[BS99]{BreSh}
\textsc{Brendle}, J\"{o}rg ; \textsc{Shelah}, Saharon:
\newblock Ultrafilters on {$\omega$}---their ideals and their cardinal characteristics.
\newblock {In: }\emph{Trans. Amer. Math. Soc.} 351 (1999), Nr. 7, 2643--2674.
\newblock \url{http://dx.doi.org/10.1090/S0002-9947-99-02257-6}. --
\newblock DOI 10.1090/S0002--9947--99--02257--6. --
\newblock ISSN 0002--9947,1088--6850

\bibitem[DR13]{DecoRep}
\textsc{De\v{c}o}, Michal ; \textsc{Repick\'{y}}, Miroslav:
\newblock Strongly dominating sets of reals.
\newblock {In: }\emph{Arch. Math. Logic} 52 (2013), Nr. 7-8, 827--846.
\newblock \url{http://dx.doi.org/10.1007/s00153-013-0347-6}. --
\newblock DOI 10.1007/s00153--013--0347--6. --
\newblock ISSN 0933--5846,1432--0665

\bibitem[GRSS95]{Gold}
\textsc{Goldstern}, Martin ; \textsc{Repick\'{y}}, Miroslav ; \textsc{Shelah}, Saharon  ; \textsc{Spinas}, Otmar:
\newblock On tree ideals.
\newblock {In: }\emph{Proc. Amer. Math. Soc.} 123 (1995), Nr. 5, 1573--1581.
\newblock \url{http://dx.doi.org/10.2307/2161150}. --
\newblock DOI 10.2307/2161150. --
\newblock ISSN 0002--9939,1088--6826

\bibitem[HHH07]{Hernand}
\textsc{Hern\'{a}ndez-Hern\'{a}ndez}, Fernando ; \textsc{Hru\v{s}\'{a}k}, Michael:
\newblock Cardinal invariants of analytic {$P$}-ideals.
\newblock {In: }\emph{Canad. J. Math.} 59 (2007), Nr. 3, 575--595.
\newblock \url{http://dx.doi.org/10.4153/CJM-2007-024-8}. --
\newblock DOI 10.4153/CJM--2007--024--8. --
\newblock ISSN 0008--414X,1496--4279

\bibitem[Hru11]{Hrus}
\textsc{Hru\v{s}\'{a}k}, Michael:
\newblock Combinatorics of filters and ideals.
\newblock \,Version:\,2011.
\newblock \url{http://dx.doi.org/10.1090/conm/533/10503}.
\newblock {In: }\emph{Set theory and its applications} Bd. 533.
\newblock Amer. Math. Soc., Providence, RI, 2011. --
\newblock DOI 10.1090/conm/533/10503. --
\newblock ISBN 978--0--8218--4812--8, 29--69

\bibitem[Hru17]{HrushKatetov}
\textsc{Hru\v{s}\'{a}k}, Michael:
\newblock Kat\v{e}tov order on {B}orel ideals.
\newblock {In: }\emph{Arch. Math. Logic} 56 (2017), Nr. 7-8, 831--847.
\newblock \url{http://dx.doi.org/10.1007/s00153-017-0543-x}. --
\newblock DOI 10.1007/s00153--017--0543--x. --
\newblock ISSN 0933--5846,1432--0665

\bibitem[Jec02]{Jech}
\textsc{Jech}, Thomas:
\newblock Set theory, The Third Millennium Edition, 2002.
\newblock {In: }\emph{Springer Berlin, Heidelberg}  (2002)

\bibitem[JMS92]{MiJuSh}
\textsc{Judah}, Haim ; \textsc{Miller}, Arnold~W.  ; \textsc{Shelah}, Saharon:
\newblock Sacks forcing, {L}aver forcing, and {M}artin's axiom.
\newblock {In: }\emph{Arch. Math. Logic} 31 (1992), Nr. 3, 145--161.
\newblock \url{http://dx.doi.org/10.1007/BF01269943}. --
\newblock DOI 10.1007/BF01269943. --
\newblock ISSN 0933--5846,1432--0665

\bibitem[Kec77]{kechris}
\textsc{Kechris}, Alexander~S.:
\newblock On a notion of smallness for subsets of the {B}aire space.
\newblock {In: }\emph{Trans. Amer. Math. Soc.} 229 (1977), 191--207.
\newblock \url{http://dx.doi.org/10.2307/1998505}. --
\newblock DOI 10.2307/1998505. --
\newblock ISSN 0002--9947,1088--6850

\bibitem[KL17]{KhomLag}
\textsc{Khomskii}, Yurii ; \textsc{Laguzzi}, Giorgio:
\newblock Full-splitting {M}iller trees and infinitely often equal reals.
\newblock {In: }\emph{Ann. Pure Appl. Logic} 168 (2017), Nr. 8, 1491--1506.
\newblock \url{http://dx.doi.org/10.1016/j.apal.2017.02.001}. --
\newblock DOI 10.1016/j.apal.2017.02.001. --
\newblock ISSN 0168--0072,1873--2461

\bibitem[Mil84]{MillerSuperperfect}
\textsc{Miller}, Arnold~W.:
\newblock Rational perfect set forcing.
\newblock \,Version:\,1984.
\newblock \url{http://dx.doi.org/10.1090/conm/031/763899}.
\newblock {In: }\emph{Axiomatic set theory ({B}oulder, {C}olo., 1983)} Bd.~31.
\newblock Amer. Math. Soc., Providence, RI, 1984. --
\newblock DOI 10.1090/conm/031/763899. --
\newblock ISBN 0--8218--5026--1, 143--159

\bibitem[Mil12]{Miller}
\textsc{Miller}, Arnold~W.:
\newblock \emph{Hechler and Laver Trees}.
\newblock 2012

\bibitem[NRa93]{RosNew}
\textsc{Newelski}, L. ; \textsc{Ros\l~anowski}, A.:
\newblock The ideal determined by the unsymmetric game.
\newblock {In: }\emph{Proc. Amer. Math. Soc.} 117 (1993), Nr. 3, 823--831.
\newblock \url{http://dx.doi.org/10.2307/2159150}. --
\newblock DOI 10.2307/2159150. --
\newblock ISSN 0002--9939,1088--6826

\bibitem[Rep16]{Repicky}
\textsc{Repick\'{y}}, Miroslav:
\newblock Cofinality of the {L}aver ideal.
\newblock {In: }\emph{Arch. Math. Logic} 55 (2016), Nr. 7-8, 1025--1036.
\newblock \url{http://dx.doi.org/10.1007/s00153-016-0510-y}. --
\newblock DOI 10.1007/s00153--016--0510--y. --
\newblock ISSN 0933--5846,1432--0665

\bibitem[Sac71]{Sacks}
\textsc{Sacks}, Gerald~E.:
\newblock Forcing with perfect closed sets.
\newblock {In: }\emph{Axiomatic {S}et {T}heory ({P}roc. {S}ympos. {P}ure {M}ath., {V}ol. {XIII}, {P}art {I}, {U}niv. {C}alifornia, {L}os {A}ngeles, {C}alif., 1967)} Bd. XIII, Part I.
\newblock Amer. Math. Soc., Providence, RI, 1971, S. 331--355

\bibitem[Spi95]{Spinas}
\textsc{Spinas}, Otmar:
\newblock Generic trees.
\newblock {In: }\emph{J. Symbolic Logic} 60 (1995), Nr. 3, 705--726.
\newblock \url{http://dx.doi.org/https://doi.org/10.2307/2275753}. --
\newblock DOI https://doi.org/10.2307/2275753

\bibitem[SS23]{ShSpinCofin}
\textsc{Shelah}, Saharon ; \textsc{Spinas}, Otmar:
\newblock Different cofinalities of tree ideals.
\newblock {In: }\emph{Ann. Pure Appl. Logic} 174 (2023), Nr. 8, Paper No. 103290, 18.
\newblock \url{http://dx.doi.org/10.1016/j.apal.2023.103290}. --
\newblock DOI 10.1016/j.apal.2023.103290. --
\newblock ISSN 0168--0072,1873--2461

\bibitem[SZ11]{SabZapl}
\textsc{Sabok}, Marcin ; \textsc{Zapletal}, Jind\v{r}ich:
\newblock Forcing properties of ideals of closed sets.
\newblock {In: }\emph{J. Symbolic Logic} 76 (2011), Nr. 3, 1075--1095.
\newblock \url{http://dx.doi.org/10.2178/jsl/1309952535}. --
\newblock DOI 10.2178/jsl/1309952535. --
\newblock ISSN 0022--4812,1943--5886

\bibitem[Szp35]{Marcz}
\textsc{Szpilrajn}, Edward:
\newblock Sur une classe de fonctions de M. Sierpiński et la classe correspondante d'ensembles.
\newblock {In: }\emph{Fund. Math.} 24 (1935), 17--34.
\newblock \url{http://dx.doi.org/10.4064/fm-24-1-17-34}. --
\newblock DOI 10.4064/fm--24--1--17--34

\bibitem[Tal80]{talagrand}
\textsc{Talagrand}, Michel:
\newblock Compacts de fonctions mesurables et filtres non mesurables.
\newblock {In: }\emph{Studia Math.} 67 (1980), Nr. 1, 13--43.
\newblock \url{http://dx.doi.org/10.4064/sm-67-1-13-43}. --
\newblock DOI 10.4064/sm--67--1--13--43. --
\newblock ISSN 0039--3223,1730--6337

\bibitem[Tod96]{Stevo}
\textsc{Todor\v{c}evi\'{c}}, Stevo:
\newblock Analytic gaps.
\newblock {In: }\emph{Fund. Math.} 150 (1996), Nr. 1, 55--66.
\newblock \url{http://dx.doi.org/10.4064/fm-150-1-55-66}. --
\newblock DOI 10.4064/fm--150--1--55--66. --
\newblock ISSN 0016--2736,1730--6329

\bibitem[Zap08]{Zapl1}
\textsc{Zapletal}, Jind\v{r}ich:
\newblock \emph{Cambridge Tracts in Mathematics}. Bd. 174: {\emph{Forcing idealized}}.
\newblock Cambridge University Press, Cambridge, 2008. --
\newblock  vi+314 S.
\newblock \url{http://dx.doi.org/10.1017/CBO9780511542732}.
\newblock \url{http://dx.doi.org/10.1017/CBO9780511542732}. --
\newblock ISBN 978--0--521--87426--7

\end{thebibliography}

\end{document}